\def\l@section{\@tocline{1}{10pt}{0em}{2.5em}{\bfseries}}
\def\l@subsection{\@tocline{2}{0pt}{2.5em}{3.5em}{}}
\def\l@subsubsection{\@tocline{3}{0pt}{5em}{5.5em}{}}
\newtheorem{theorem}{Theorem}[section]
\newtheorem{corollary}[theorem]{Corollary}
\newtheorem{proposition}[theorem]{Proposition}
\newtheorem{notation}[theorem]{Notation}
\newtheorem{lemma}[theorem]{Lemma}
\theoremstyle{definition}
\newtheorem{definition}[theorem]{Definition}
\newtheorem{example}[theorem]{Example}
\theoremstyle{remark}
\newtheorem{remark}[theorem]{Remark}
\newcommand{\E}{\mathcal{E}}
\DeclareMathOperator{\End}{End}
\DeclareMathOperator{\ann}{ann}
\DeclareMathOperator{\spa}{\textup{\textsf{span}}}
\numberwithin{equation}{section}
\title[On formal deformations and degenerations of evolution algebras]{On formal deformations and degenerations of evolution algebras}
\author[A. Makhlouf]{Abdenacer Makhlouf\textsuperscript{1}\,\orcidlink{0000-0002-5329-487X}}
\address{\textsuperscript{1} Université de Haute Alsace, IRIMAS, Département de Mathématiques,  F-68093 Mulhouse, France}
\email{abdenacer.makhlouf@uha.fr}
\author[A. Pérez-Rodríguez]{Andrés Pérez-Rodríguez\textsuperscript{2}\,\orcidlink{0009-0007-1095-5328}}
\address{\textsuperscript{2} Universidade de Santiago de Compostela, Departmento de Matemáticas \& CITMAga,  15782 Santiago de Compostela, Spain}
\email{andresperez.rodriguez@usc.es}
\subjclass{13D10, 17D92, 17A60, 17B30}
\keywords{Evolution algebra, deformation, degeneration, nilpotent evolution algebra}
\begin{document}
	
	\begin{abstract}
		The main purpose of this paper is to study formal deformations of evolution algebras, determining their existence and classifying them up to equivalence. In addition, we examine degenerations in this setting and provide Hasse diagrams that describe the degeneration relations among nilpotent evolution algebras of dimensions up to four.
	\end{abstract}
	\maketitle
	\vspace{-0.5cm}
	\section{Introduction}
	Evolution algebras are commutative but non-associative structures introduced by Tian and Vojt\v{e}chovsk\'{y} in 2006 (see \cite{TV_06}), motivated by applications in non-Mendelian genetics and dynamical systems. Two years later, Tian published a new monograph (see \cite{Tian_08}) in which the properties and biological applications of evolution algebras are studied in more detail. Unlike many classical algebraic structures, evolution algebras are not defined by polynomial identities but rather by the existence of a distinguished basis in which the product of any two distinct basis elements is zero. Hence, evolution algebras, in general, do not belong to any of the well-known classes of non-associative algebras such as Lie, Jordan, or alternative algebras. 
	This distinction is particularly relevant to the main goal of this work: the study of \textit{deformations} and \textit{degenerations} of evolution algebras, which have received very little attention in this setting, despite having been extensively investigated in the aforementioned classical varieties. To the best of our knowledge, the only related study in this direction is \cite{CFK_24}, which deals with degenerations of evolution algebras with one-dimensional squares.
	
	Although there are numerous definitions and perspectives for both concepts, we adopt here a more formal point of view.
	\textit{Formal deformations}, introduced by Gerstenhaber \cite{G_64} for associative algebras and later generalized to Lie algebras by Nijenhuis and Richardson \cite{NR_66,NR_67}, provide a way to study how the multiplication of an algebra can be perturbed while remaining inside the same class. Roughly speaking, a deformation of an algebraic structure $\mathcal{A}$ with product $\mu$ consists in constructing a new product $\mu_t$ over the formal power series space $\mathcal{A}[[t]]$ given by $\mu_t=\mu+\sum_{k\geq1}t^k\mu_k$, where each $\mu_k$ is a bilinear map on $\mathcal{A}$. In general, the goal of deformation theory is to understand how new multiplications $\mu_t$ enrich or modify the original structure, by determining their existence and classifying them up to the so-called \textit{equivalence}, a process typically controlled by the second cohomology space.
	
	On the other hand, \textit{degeneration} is a concept that is somewhat opposite of deformation, and it has been extensively studied in the classical case of Lie algebras (see, for example, references \cite{BS_99_deg_Lie,Carl_78,S_90_deg_nilp_Lie}). Indeed, since Lie algebras are defined by polynomial identities, the set of $n$-dimensional Lie algebras over a field $\mathbb{K}$, $\mathcal{L}_n(\mathbb{K})$, forms an affine algebraic variety in the $n^3$-dimensional affine space $\mathbb{K}^{n^3}$. Moreover, the general linear group $\operatorname{GL}(n,\mathbb{K})$ acts naturally on $\mathcal{L}_n(\mathbb{K})$ by change of basis (see \cite{KN_87}). In this framework, given two $n$-dimensional Lie algebras $\mathcal{L}_1$ and $\mathcal{L}_0$ over a field $\mathbb{K}$, we say that $\mathcal{L}_1$ degenerates to $\mathcal{L}_0$, and write $\mathcal{L}_1\longrightarrow_{\text{deg}}\mathcal{L}_0$, if $\mathcal{L}_0$ lies in the Zariski closure of the orbit of $\mathcal{L}_1$ under this action. By the same arguments, a full description of degenerations for low-dimensional complex associative algebras has also been obtained (see, for example, references \cite{Gab_74,IP_24_deg_assoc,M_97,Maz_79}).
	
	However, since evolution algebras over a field $\mathbb{K}$ are not defined by identities, the $n$-dimensional ones do not form an affine algebraic variety of $\mathbb{K}^{n^3}$ and, consequently, the Zariski topology cannot be considered. Hence, we will adopt the formal viewpoint presented in \cite[Subsection~5.2]{M_07} for associative algebras. Given a continuous family $\{g_t\}_{t\neq0}$ of invertible linear maps on an $n$-dimensional vector space $V$ over $\mathbb{K}$
	and an algebra $\mathcal{A}_1$ over $\mathbb{K}$ with underlying vector space $V$ and product $\mu_1$, when the limit
	\begin{align}\label{eq:action}
		\mu_0(x,y)= \lim_{t\to0}g_t\cdot\mu_1(x,y)
		\coloneqq\lim_{t\to0}g_t\big(\mu_1(g_t^{-1}x,g_t^{-1}y)\big)
	\end{align}
	exists for all $x,y\in V$, we say that the algebra $\mathcal{A}_0$, with the same underlying vector space $V$ and product $\mu_0$, is a \textit{formal degeneration} of $\mathcal{A}_1$. As stated in \cite[Proposition~5.1]{M_07}, when working with algebraic varieties like associative or Lie algebras, every formal degeneration is also a degeneration in the usual sense. 
	
	The above definitions highlight the dual nature of the two concepts: formal degenerations tend to simplify the algebraic structure, often producing algebras that are closer to the abelian case, while formal deformations typically generate more intricate multiplication laws. In this work, we study both notions in the setting of evolution algebras, adapting the classical approaches and making the necessary changes due to the lack of variety structure but the existence of a natural basis.
	
	This paper is organized into five sections. Following this introduction, Section~\ref{sec:2} presents preliminary material on evolution algebras. We primarily focus on aspects related to nilpotent evolution algebras, such as the construction of the upper annihilating series and the classification in dimensions three and four (see Table~\ref{table:clas_nilp}), which will play a key role in the final part of the paper.
	
	In Section~\ref{sec:3}, we introduce formal deformations of evolution algebras (see Definition~\ref{def:def}) by requiring that the product of distinct elements of the natural basis remains zero. This condition naturally leads to an evolution algebra structure over the power series ring $\mathbb{K}[[t]]$. We also define the notion of equivalence of deformations and prove that, if two deformations are equivalent, the difference of their first-order terms takes a derivation-like expression (see Theorem~\ref{th:dif_infinit_terms}). Finally, we show that every evolution algebra admits a non-trivial deformation (see Theorem~\ref{th:nonrigid}), in sharp contrast to the rigidity typically observed in semisimple algebras. 
	
	Since the second cohomology group traditionally governs infinitesimal deformations, Section~\ref{sec:4} introduces a definition of the second cohomology space for evolution algebras (see Definition~\ref{def:cohom}), which likewise controls such deformations. As an illustrative example, we compute the second cohomology space of all two-dimensional complex evolution algebras, thereby obtaining all their infinitesimal deformations up to equivalence (see Theorem~\ref{th:cohom_2}).
	
	Finally, Section~\ref{sec:5} is devoted to the study of formal degenerations, understood as a sort of dual procedure to formal deformations. In particular, we establish several criteria to determine whether a degeneration exists (see Proposition ~\ref{prop:dim}). On the other hand, we show that degenerations lack transitivity (see Example~\ref{ex:1} and Remark~\ref{rem:with_example}), which can be seen as their main limitation. The section concludes with Hasse diagrams illustrating the degeneration relation among three and four-dimensional evolution algebras (see Theorem~\ref{th:deg_3} and Proposition~\ref{prop:deg_4}).

	\section{Some background on evolution algebras}\label{sec:2}
	Consider a vector space $V$ over a field $\mathbb{K}$ and a bilinear map $\mu\colon V\times V\longrightarrow V$. An \textit{evolution $\mathbb{K}$-algebra} is a pair $\mathcal{E}=(V,\mu)$ which admits a distinguished basis $B=\{e_1,\dots,e_n,\dots\}$, called \textit{natural basis}, such that $\mu(e_i,e_j)=0$ for all $i\neq j$. For simplicity of notation, we will often refer to an evolution algebra $\mathcal{E}$ directly by its product $\mu$, and we will write $e_ie_j$ instead of $\mu(e_i,e_j)$ when there is no risk of confusion. In particular, in this paper we focus on finite-dimensional evolution algebras, which means that $B$ is a finite set. For a given natural basis $B=\{e_1,\dots,e_n\}$, the scalars $\omega_{ij}\in\mathbb{K}$ satisfying $\mu(e_i,e_i)=\sum_{j=1}^n\omega_{ij}e_j$ are called the \textit{structure constants} of $\mathcal{E}$ relative to $B$. The matrix $M_B(\mathcal{E})=(\omega_{ij})_{i,j=1}^n$ is said to be the \textit{structure matrix} of $\mathcal{E}$ relative to $B$. Although we will extensively work with evolution algebras over $\mathbb{C}$, most  results will be stated and proved over arbitrary fields.
	
	\begin{notation}
		Let $V$ be a vector space over a field $\mathbb{K}$ with a fixed basis $\{e_1,\dots,e_n\}$. We denote by $\mathcal{Z}^2(V)$ the set of all bilinear maps on $V$ that vanish on distinct basis elements, that is, the space of bilinear maps defining evolution algebra structures on $V$:
		\[
		\mathcal{Z}^2(V) \coloneqq \left\{ \theta \in \operatorname{Bil}_{\mathbb{K}}(V \times V, V) \mid \theta(e_i, e_j) = 0 \text{ for all } i \neq j \right\}.
		\]
		Following the terminology commonly used in cohomology theory, we will refer to $\mathcal{Z}^2(V)$ as the \textit{space of $2$-cocycles}. Note that $\mathcal{Z}^2(V)$ can be naturally identified with the space of $n \times n$ matrices over $\mathbb{K}$, where $n = \dim(V)$, by associating each $\theta \in \mathcal{Z}^2(V)$ with its structure matrix relative to the fixed basis, disregarding possible isomorphisms.
	\end{notation}

	Recall that the \textit{annihilator} of an evolution algebra $\mathcal{E}$ with natural basis $B=\{e_1,\dots,e_n\}$ is characterised by \cite[Proposition~1.5.3]{thesis_yolanda},
	\[\ann(\mathcal{E})\coloneqq\{u\in\mathcal{E}\colon u\mathcal{E}=0\}=\spa\{e_i\in B\colon e_i^2=0\}.\]
	Moreover, following \cite[Definition 3.3]{EL_16}, we can also define the chain of ideals $\ann^i(\mathcal{E})$, $i\geq1$, where $\ann^1(\mathcal{E})\coloneqq\ann(\mathcal{E})$ and $\ann^i(\mathcal{E})$ with $i\geq2$ is defined by $\ann^i(\mathcal{E})/\ann^{i-1}(\mathcal{E})\coloneqq\ann(\mathcal{E}/\ann^{i-1}(\mathcal{E}))$. Equivalently, $\ann^i(\mathcal{E})\coloneqq\spa\{e\in B\colon e^2\in\ann^{i-1}(\mathcal{E})\}$ for all $i\geq2$. The chain of ideals
	\[0\subseteq\ann^1(\mathcal{E})\subseteq\dots\subseteq\ann^r(\mathcal{E})\subseteq\dots\]
	is called the \textit{upper annihilating series} of $\E$. Notice that, as we are only considering finite-dimensional algebras, there exists an integer $r\geq1$ such that $\ann^r(\mathcal{E})=\ann^{r+1}(\mathcal{E})=\ann^{r+2}(\mathcal{E})=\cdots$, that is, the upper annihilating series stabilises for some $r\geq1$. 
	
	In this paper, we will work extensively with nilpotent evolution algebras. Given a (not necessarily evolution) algebra $\mathcal{E}$, we define the following sequence of subalgebras:
	\[\mathcal{E}^1=\mathcal{E},\qquad  \quad  \ \ \mathcal{E}^{k+1}=\sum_{i=1}^k\mathcal{E}^i\mathcal{E}^{k+1-i}.\]	
	An (evolution) algebra $\mathcal{E}$ is called \textit{nilpotent} if there exists $n\in\mathbb{N}$ such that $\mathcal{E}^{n}=0$. Recall that the structure matrix of a nilpotent evolution algebra can be assumed to be strictly (upper or lower) triangular by \cite[Theorem~2.7]{CLOR_14}. Consequently, we also have that $\E$ is nilpotent if and only if the upper annihilating series reaches $\E$. That is, there exists an integer $r\geq1$ such that $\ann^r(\E)=\E$. Moreover, as presented in \cite[Definition 3.4]{EL_16}, the \textit{type} of an evolution algebra $\mathcal{E}$ is defined as the sequence $[n_1,\dots,n_r]$ such that
	\[n_i=\dim{\big(\ann(\E/\ann^{i-1}(\E))\big)}=\dim{\big(\ann^i(\E)\big)}-\dim{\big(\ann^{i-1}(\E)\big)},\]
	for all $i=1,\dots,r$. In the  following, nilpotent evolution algebras of dimension $n$ over a field $\mathbb{K}$ will be denoted by $\mathcal{N}_n(\mathbb{K})$. In particular, Table \ref{table:clas_nilp} displays the classification of nilpotent evolution algebras up to dimension four over $\mathbb{C}$, $\mathcal{N}_n(\mathbb{C})$ with $n=1,2,3$ and $4$, originally established in \cite[Theorems 5.1, 5.2, 5.3 \& 6.1]{HA_15} and later refined in \cite[Theorem 5.1]{EL_16}, together with their type and the dimension of their square.

    \begin{small}
       \begin{table}[H]
		\begin{center}
			\setlength{\tabcolsep}{5pt}
			\renewcommand{\arraystretch}{1.1}
			\begin{tabular}{||l | l | l | l ||}
				\hline \textbf{$\mathcal{E}$} & \textbf{Product}&\textbf{Type of $\mathcal{E}$}  & \textbf{Dimension of $\mathcal{E}^2$} \\
				\hline\hline
				$\mu_{1,1}$ &$e_1^2=0$ & $[1]$ & $0$ \\
				\hline
				$\mu_{2,1}$&$e_1^2=e_2^2=0$ & $[2]$ & $0$ \\
				$\mu_{2,2}$&$e_1^2=e_2,\;e_2^2=0$ & $[1,1]$ & $1$ \\
				\hline
				$\mu_{3,1}$&$e_1^2=e_2^2=e_3^2=0$ & $[3]$ & $0$ \\
				$\mu_{3,2}$&$e_1^2=e_2,\;e_2^2=e_3^2=0$ & $[2,1]$ & $1$ \\
				$\mu_{3,3}$&$e_1^2=e_2^2=e_3,\;e_3^2=0$ & $[1,2]$ & $1$ \\
				$\mu_{3,4}$&$e_1^2=e_2,\;e_2^2=e_3,\;e_3^2=0$ & $[1,1,1]$ & $2$ \\
                \hline
				 $\mu_{4,1}$&$e_1^2=e_2^2=e_3^2=e_4^2=0$ & $[4]$ & $0$ \\
				$\mu_{4,2}$&$e_1^2=e_2,\;e_2^2=e_3^2=e_4^2=0$ & $[3,1]$ & $1$ \\
				$\mu_{4,3}$&$e_1^2=e_2^2=e_3,\;e_3^2=e_4^2=0$ & $[2,2]$ & $1$ \\
				$\mu_{4,4}$&$e_1^2=e_3,\;e_2^2=e_4,\;e_3^2=e_4^2=0$ & $[2,2]$ & $2$ \\
				$\mu_{4,5}$&$e_1^2=e_2,\;e_2^2=e_4,\;e_3^2=e_4^2=0$ & $[2,1,1]$ & $2$ \\
				$\mu_{4,6}$&$e_1^2=e_2^2=e_3^2=e_4,\;e_4^2=0$ & $[1,3]$ & $1$ \\
				$\mu_{4,7}$&$e_1^2=e_2,\;e_2^2=e_3^2=e_4,\;e_4^2=0$ & $[1,2,1]$ & $2$ \\
				$\mu_{4,8}$&$e_1^2=e_2+ie_3,\;e_2^2=e_3^2=e_4,\;e_4^2=0$ & $[1,2,1]$ & $2$ \\
				$\mu_{4,9}$&$e_1^2=e_2^2=e_3,\;e_3^2=e_4,\;e_4^2=0$ & $[1,1,2]$ & $2$ \\
				$\mu_{4,10}$&$e_1^2=e_3,\;e_2^2=e_3+e_4,\;e_3^2=e_4,\;e_4^2=0$ & $[1,1,2]$ & $2$ \\
				$\mu_{4,11}$&$e_1^2=e_2,\;e_2^2=e_3,\;e_3^2=e_4,\;e_4^2=0$ & $[1,1,1,1]$ & $3$ \\
				$\mu_{4,12}$&$e_1^2=e_2+e_3,\;e_2^2=e_3,\;e_3^2=e_4,\;e_4^2=0$ & $[1,1,1,1]$ & $3$ \\
				\hline
			\end{tabular}
		\end{center}
		\caption{Classification of nilpotent evolution algebras up to dimension four over $\mathbb{C}$.}
		\label{table:clas_nilp}
	\end{table}	 
    \end{small}

	To conclude this preliminary section, and in contrast with nilpotent evolution algebras, we briefly recall the notion of regular evolution algebras. An evolution algebra $\E$ is said to be regular if $\E = \E^2$, meaning that it is generated by its squares. Since we are only considering finite-dimensional evolution algebras, this condition is equivalent to the structure matrix being non-singular.

	\section{Formal deformations of evolution algebras}\label{sec:3}
	Let $\E=(V,\mu)$ be a finite dimensional  evolution algebra over a field $\mathbb{K}$, and let $\mathbb{K}[[t]]$ denote the formal power series ring in one variable $t$. We define a formal space  $V[[t]]\coloneqq V\otimes\mathbb{K}[[t]]$, which is the result of extending the coefficient domain of $\E$ from $\mathbb{K}$ to $\mathbb{K}[[t]]$. Observe that each element $u\in V[[t]]$ can thus be written as a power series $u=\sum_{k\geq0}u_kt^k$, where  $u_k\in V$. Moreover, note that any $\mathbb{K}$-bilinear map $\nu\colon V\times V\longrightarrow V$ (in particular, every element of $\mathcal{Z}^2(V)$) extends naturally to a $\mathbb{K}[[t]]$-bilinear map from $V[[t]]\times V[[t]]$ to $V[[t]]$. 
	
	In this setting, the study of deformations aims to define new multiplication laws on the space $V[[t]]$ that yield new $\mathbb{K}[[t]]$-evolution algebra structures. This leads to the following definition.
	
	\begin{definition}\label{def:def}
		Let $\mathcal{E}=(V,\mu)$ be an evolution algebra with a natural basis $B=\{e_1,\dots,e_n\}$. A \textit{formal evolution deformation} of $\E$ is given by a $\mathbb{K}[[t]]$-bilinear map $\nu_t\colon V[[t]]\times V[[t]]\longrightarrow V[[t]]$ of the form $\nu_t=\mu+\sum_{k\geq1}\nu_kt^k$, where each $\nu_k\colon V\times V\longrightarrow V$ is a $\mathbb{K}$-bilinear map (extended to be $\mathbb{K}[[t]]$-bilinear) and the following ``condition of naturalness'' is satisfied:
		\begin{align}\label{eq:def_nat}
			\nu_t(e_i,e_j)=0,\quad\text{for all }i\neq j.
		\end{align}
	\end{definition}
	
	\begin{remark}~
		\begin{enumerate}
			\item Setting $t = 0$ in the previous definition recovers the original algebra $\mathcal{E}$.
			\item By bilinearity, a formal evolution deformation of an evolution algebra $\E$ is uniquely determined by how it acts on the elements of the natural basis of $\E$. 
			\item Note that condition \eqref{eq:def_nat} holds if and only if $\nu_k\in\mathcal{Z}^2(V)$ for all $k \geq 0$.
			Consequently, each map $\nu_k$ can be seen as an evolution algebra on its own. If we denote by $\rho_{ij}^k$ the structure constants of $\nu_k$ with respect to $B$, then the deformation $\nu_t$ expands as follows:
			\begin{align*}
				\nu_t(e_i,e_i)&=\mu(e_i,e_i)+\sum_{k\geq1}t^k\nu_k(e_i,e_i)\\
				&=\sum_{j=1}^n \omega_{ij} e_j+\sum_{k\geq1}t^k\left(\sum_{j=1}^n \rho_{ij}^k e_j\right)=\sum_{j=1}^n\left(\omega_{ij}+\sum_{k\geq1}\rho_{ij}^kt^k\right)e_j.
			\end{align*}
			\item As a consequence of condition \eqref{eq:def_nat}, a formal evolution deformation $\nu_t$ of an evolution algebra $\mathcal{E} = (V, \mu)$ defines an evolution $\mathbb{K}[[t]]$-algebra structure on the vector space $V[[t]]$, with the same natural basis as the original. It is worth mentioning that evolution algebras over rings (in particular, integral domains) have already been considered in~\cite{CMM_23}.
			\item From now on, and for the sake of simplicity, we will use the term deformation to refer to a formal evolution deformation.
		\end{enumerate}
	\end{remark}
	
	Although addressed from a different perspective, \cite{CLR_11_chain} provides several examples of chains of evolution algebras that, somehow, align with the notion of formal evolution deformation. Next, we present one of these examples.
	\begin{example}[{\cite[Example 1]{CLR_11_chain}}] This example models a time-homogeneous Markov process described in \cite{K_38}. In fact, using the Taylor series expansion of the functions $\sin(t)$, $\cos(t)$, and $e^t$, we obtain the following structure matrix:\\
		\begin{align*}
			\omega_{ii}^{[t]}&=\frac{2}{3}e^{-\frac{3}{2}At}\cos(\alpha t)+\frac{1}{3}\\
			&=\frac{2}{3}\left(\sum_{n=0}^{\infty}\frac{(-1)^n(\frac{3}{2}A)^n}{n!}t^n\right)\left(\sum_{n=0}^{\infty}\frac{(-1)^n\alpha^{2n}}{(2n)!}t^{2n}\right)+\frac{1}{3},\quad i=1,2,3;\\		
			\omega_{12}^{[t]}&=\omega_{23}^{[t]}=\omega_{31}^{[t]}=e^{-\frac{3}{2}At}\left(\frac{1}{\sqrt{3}}\sin(\alpha t)-\frac{1}{3}\cos(\alpha t)\right)+\frac{1}{3}\\
			&=\left(\sum_{n=0}^{\infty}\frac{(-1)^n(\frac{3}{2}A)^n}{n!}t^n\right)\left(\frac{1}{\sqrt{3}}\sum_{n=0}^{\infty}\frac{(-1)^n\alpha^{2n+1}}{(2n+1)!}t^{2n+1}-\frac{1}{3}\sum_{n=0}^{\infty}\frac{(-1)^n\alpha^{2n}}{(2n)!}t^{2n}\right)+\frac{1}{3};\\
			\omega_{21}^{[t]}&=\omega_{32}^{[t]}=\omega_{13}^{[t]}=-e^{-\frac{3}{2}At}\left(\frac{1}{\sqrt{3}}\sin(\alpha t)+\frac{1}{3}\cos(\alpha t)\right)+\frac{1}{3}\\
			&=-\left(\sum_{n=0}^{\infty}\frac{(-1)^n(\frac{3}{2}A)^n}{n!}t^n\right)\left(\frac{1}{\sqrt{3}}\sum_{n=0}^{\infty}\frac{(-1)^n\alpha^{2n+1}}{(2n+1)!}t^{2n+1}+\frac{1}{3}\sum_{n=0}^{\infty}\frac{(-1)^n\alpha^{2n}}{(2n)!}t^{2n}\right)+\frac{1}{3};	
		\end{align*}
		with $A>0$ and $\alpha=\frac{\sqrt{3}}{2}A$. It is easy to see that each entry of this matrix belongs to $\mathbb{K}[[t]]$, thus yielding a structure that could be treated as a  deformation.
	\end{example}
	\begin{definition}
		Let $\nu_t=\mu+\sum_{k\geq1}\nu_kt^k$ be a deformation of an evolution algebra $\mu$. Each coefficient $\nu_k$ is called the \textit{coefficient of order $k$}. Particularly, the first-order coefficient $\nu_1$ is called the \textit{infinitesimal} of $\nu_t$. Moreover, if $\nu_t$ is a truncated deformation, that is, there exists an integer $m$ such that $\nu_t=\mu+\sum_{k=1}^m\nu_kt^k$ with $\nu_m\neq0$, then we say that the deformation $\nu_t$ is of \textit{order} $m$.
	\end{definition}

	Analogously to Definition \ref{def:def}, one can also consider evolution algebra structures on $V \otimes \mathbb{K}[t]/(t^{m+1})$, which correspond to considering deformations up to order $m$, over $\mathbb{K}[t]/(t^{m+1})$. The next definition introduces a particularly important example of this.
	\begin{definition}
		Let $\E=(V,\mu)$ be an evolution algebra. Deformations of $\E$ over the vector space $V\otimes\mathbb{K}[t]/(t^2)$ are called \textit{infinitesimal deformations} of $\E$. The set of all such deformations will be denoted by $\operatorname{InfDef}(\E)$.
	\end{definition}
	\subsection{Equivalence of deformations}
	The next issue is to determine when two deformations should be regarded essentially the same. This is addressed by introducing the notion of equivalence between deformations.
	\begin{definition}
		Let $\nu_t$ and $\lambda_t$ be two formal evolution deformations of an evolution algebra $\E=(V,\mu)$. We say that $\nu_t$ and $\lambda_t$ are \textit{equivalent} if there exists a $\mathbb{K}[[t]]$-linear map $\phi_t\colon V[[t]]\longrightarrow V[[t]]$ of the form
		$\phi_t=\operatorname{Id}+t\phi_1+t^2\phi_2+\dots$, 
		where each $\phi_k\colon V\longrightarrow V$ is $\mathbb{K}$-linear and such that 
		\begin{align}\label{eq:equiv_def_1}
			\phi_t\big(\nu_t(u,v)\big)=\lambda_t\big(\phi_t(u),\phi_t(v)\big)
		\end{align}
		for all $u,v\in\mathcal{E}$. Notice that given a natural basis $\{e_1,\dots,e_n\}$ of $\E$, since $\nu_t$ and $\lambda_t$ are bilinear and $\phi_t$ is linear, the condition \eqref{eq:equiv_def_1} is equivalent to requiring that
		\begin{align}\label{eq:equiv_def}
			\phi_t\big(\nu_t(e_i,e_j)\big)=\lambda_t\big(\phi_t(e_i),\phi_t(e_j)\big)
		\end{align}
		holds for all $i,j=1,\dots,n$. If $\nu_t$ and $\lambda_t$ are equivalent through a formal isomorphism $\phi_t$, we write $\nu_t\cong_{\phi_t}\lambda_t$.
	\end{definition}
	\begin{remark}
		Given a ring $R$, it is known that an element $r=\sum_{k\geq0}r_kt^k$ is invertible in the ring of formal power series $R[[t]]$ if and only if $r_0$ is invertible in $R$. Consequently, any linear map $\phi\in\operatorname{End}_{\mathbb{K}}(V,V)[[t]]$ of the form $\phi=\operatorname{Id}+t\phi_1+t^2\phi_2+\dots$, as the one considered in the previous definition, is invertible and yields a $\mathbb{K}[[t]]$-automorphism of $V[[t]]$.
	\end{remark}
	Let $\E=(V,\mu)$ be an evolution algebra with natural basis $B=\{e_1,\dots,e_n\}$ and structure matrix $M_B(\E)=(\omega_{ij})$. Then, given two formal evolution deformations $\nu_t$ and $\lambda_t$ of $\mu$, we can express their products as
	\begin{align}\label{eq:def_formal_deformations}
		\begin{split}
			\nu_t(e_i, e_i) &= \sum_{j=1}^n \left( \omega_{ij} + {\rho}_{ij}^{\,1} t + {\rho}_{ij}^{\,2} t^2 + \dots \right) e_j = \sum_{j=1}^n\left(\omega_{ij}+\sum_{k\geq1}\rho_{ij}^kt^k\right)e_j,\\
			\lambda_t(e_i, e_i) &= \sum_{j=1}^n \left( \omega_{ij} + \sigma_{ij}^{\,1} t + \sigma_{ij}^{\,2} t^2 + \dots \right) e_j = \sum_{j=1}^n\left(\omega_{ij}+\sum_{k\geq1}\sigma_{ij}^kt^k\right)e_j,
		\end{split}
	\end{align}
	for all $i=1,\dots,n$, where $\rho_{ij}^k$ and $\sigma_{ij}^k$ are scalars in $\mathbb{K}$ representing the structure constants of the coefficients of each order $k$.
	\begin{proposition}\label{prop:equiv_deform}
		Let $\nu_t$ and $\lambda_t$ be two deformations of an evolution algebra $\E=(V,\mu)$ over any field $\mathbb{K}$, with expansions as in \eqref{eq:def_formal_deformations}. If they are equivalent, then there exists a matrix $(\xi_{ij})_{i,j=1}^n$ such that  the following conditions are satisfied:
		\begin{align}
			&\xi_{ji}\omega_{jk}+\xi_{ij}\omega_{ik}=0,\text{ for all }1\leq i, j, k\leq n\text{ such that }i\neq j;\label{cond_1}\\	
			&\rho_{ik}^{\,1}+\sum_{p=1}^{n}\omega_{ip}\xi_{kp}=\sigma_{ik}^1+2\xi_{ii}\omega_{ik},\text{ for all }1\leq i,k\leq n.\label{cond_2}
		\end{align}
	\end{proposition}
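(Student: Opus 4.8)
The plan is to extract the two stated relations by expanding the equivalence condition \eqref{eq:equiv_def} as a formal power series in $t$ and comparing the coefficients of $t^0$ and $t^1$. The matrix $(\xi_{ij})$ will be nothing but the matrix of the first-order term $\phi_1$ of the formal isomorphism $\phi_t=\operatorname{Id}+t\phi_1+t^2\phi_2+\cdots$ witnessing the equivalence; concretely, I would set $\phi_1(e_i)=\sum_{p=1}^n\xi_{pi}e_p$. Writing $\phi_t(e_i)=\sum_{a\geq0}t^a\phi_a(e_i)$ with $\phi_0=\operatorname{Id}$, and recalling that $\nu_t=\sum_{c\geq0}t^c\nu_c$ and $\lambda_t=\sum_{c\geq0}t^c\lambda_c$ with $\nu_0=\lambda_0=\mu$ and $\nu_c,\lambda_c\in\mathcal{Z}^2(V)$, both sides of \eqref{eq:equiv_def} become triple (resp.\ double) sums whose homogeneous components in $t$ can be read off directly.

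First I would treat the off-diagonal case $i\neq j$. Here the left-hand side of \eqref{eq:equiv_def} vanishes identically, since $\nu_t(e_i,e_j)=0$ by the condition of naturalness, so every coefficient of the right-hand side must vanish. The coefficient of $t^0$ gives $\mu(e_i,e_j)=0$, which holds automatically. The coefficient of $t^1$ collects the three contributions $\mu(\phi_1(e_i),e_j)$, $\mu(e_i,\phi_1(e_j))$, and $\lambda_1(e_i,e_j)$; the last term is zero because $\lambda_1\in\mathcal{Z}^2(V)$, and the first two collapse using the defining evolution-algebra relations $e_pe_j=0$ for $p\neq j$. Explicitly, $\mu(\phi_1(e_i),e_j)=\xi_{ji}\,\mu(e_j,e_j)$ and $\mu(e_i,\phi_1(e_j))=\xi_{ij}\,\mu(e_i,e_i)$, so that expanding $\mu(e_j,e_j)=\sum_k\omega_{jk}e_k$ and $\mu(e_i,e_i)=\sum_k\omega_{ik}e_k$ and equating the coefficient of each $e_k$ to zero yields precisely \eqref{cond_1}.

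Next I would handle the diagonal case $i=j$, which produces \eqref{cond_2}. On the left-hand side the coefficient of $t^0$ is $\mu(e_i,e_i)=\sum_k\omega_{ik}e_k$, and the coefficient of $t^1$ is $\nu_1(e_i,e_i)+\phi_1(\mu(e_i,e_i))=\sum_k\rho_{ik}^1e_k+\sum_k\omega_{ik}\phi_1(e_k)$. On the right-hand side the coefficient of $t^1$ is $\mu(\phi_1(e_i),e_i)+\mu(e_i,\phi_1(e_i))+\lambda_1(e_i,e_i)$; commutativity of the evolution product together with $e_pe_i=0$ for $p\neq i$ reduces the first two terms to $2\xi_{ii}\,\mu(e_i,e_i)$, while the third equals $\sum_k\sigma_{ik}^1e_k$. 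Matching the coefficient of $e_k$ on both sides and substituting $\phi_1(e_k)=\sum_p\xi_{pk}e_p$ gives exactly \eqref{cond_2}.

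Since the argument is a direct coefficient comparison, there is no genuine analytic difficulty; the only point requiring care is the bookkeeping. Specifically, I must correctly enumerate the three order-$t^1$ contributions coming from the triple sum $\sum_{a+b+c=1}\lambda_c(\phi_a(e_i),\phi_b(e_j))$ on the right-hand side, repeatedly invoke the evolution-algebra identity $e_pe_q=0$ for $p\neq q$ to collapse the resulting sums, and fix the index convention $\phi_1(e_i)=\sum_p\xi_{pi}e_p$ so that the entries of $(\xi_{ij})$ align with the asymmetric placement of indices in \eqref{cond_1} and \eqref{cond_2}. Note finally that the higher-order coefficients $\phi_2,\phi_3,\dots$ and the higher-order terms of the deformations play no role here, since only the $t^0$ and $t^1$ components are being compared.
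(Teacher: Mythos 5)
Your proposal is correct and follows essentially the same route as the paper: take $(\xi_{ij})$ to be the matrix of $\phi_1$ with the convention $\phi_1(e_i)=\sum_p\xi_{pi}e_p$, split into the cases $i\neq j$ and $i=j$, and compare the coefficients of $t^0$ and $t^1$ in \eqref{eq:equiv_def} (the paper phrases this as computing modulo $t^2$, which is the same thing). Your bookkeeping of the order-one contributions, the use of $e_pe_q=0$ for $p\neq q$ and of commutativity to get the factor $2\xi_{ii}$, matches the paper's computation exactly.
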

	\begin{proof}
		Let $\phi_t=\operatorname{Id}+t\phi_1+t^2\phi_2+\dots$ be a formal isomorphism such that $\nu_t\cong_{\phi_t}\lambda_t$, and denote by $(\xi_{ij})_{i,j=1}^n$ the matrix of $\phi_1$ with respect to the basis $B$. All the following computations are made modulo $t^2$. 	In view of \eqref{eq:equiv_def}, when $i\neq j$, we have 
		\begin{align*}
			0 = \phi_t\big(\mu_t(e_i, e_j)\big) 
			&= \lambda_t\big(\phi_t(e_i), \phi_t(e_j)\big) &&\\
			&= \lambda_t\big(e_i + t\phi_1(e_i),\; e_j + t\phi_1(e_j)\big) &\text{mod }(t^2)&\\
			&= \lambda_t(e_i, e_j) 
			+ t\big(\lambda_t(\phi_1(e_i), e_j) + \lambda_t(\phi_1(e_j), e_i)\big)  &\text{mod } (t^2)& \\
			&= t\left(
			\lambda_t\left(\sum_{p=1}^n \xi_{pi} e_p,\; e_j\right) 
			+ \lambda_t\left(\sum_{p=1}^n \xi_{pj} e_p,\; e_i\right)
			\right)  &\text{mod } (t^2) &\\
			&= t\big(\xi_{ji}\lambda_t(e_j, e_j) + \xi_{ij}\lambda_t(e_i, e_i)\big)  &\text{mod } (t^2)& \\
			&= t \left(
			\xi_{ji} \sum_{k=1}^n \omega_{jk} e_k 
			+ \xi_{ij} \sum_{k=1}^n \omega_{ik} e_k
			\right) &\text{mod } (t^2)&\\
			&= t \sum_{k=1}^n \left( \xi_{ji} \omega_{jk} + \xi_{ij} \omega_{ik} \right) e_k  &\text{mod } (t^2)&.
		\end{align*}
		Therefore, we obtain $\xi_{ji}\omega_{jk}+\xi_{ij}\omega_{ik}=0$ for all $1\leq i\neq j\leq n$ and for all $1\leq k\leq n$, which correspond exactly to the conditions presented in \eqref{cond_1}. Again, in view of \eqref{eq:equiv_def}, when $i=j$, we have\\
		\begin{equation}\label{eq:prop_1}
			\begin{aligned}
				\hspace{-0.1cm}\phi_t\big(\nu_t(e_i,e_i)\big)
				&=\sum_{k=1}^n \left( \omega_{ik} + {\rho}_{ik}^{\,1} t \right)\phi_t( e_k)&\text{mod } (t^2)&\\
				&=\sum_{k=1}^n \left( \omega_{ik} + {\rho}_{ik}^{\,1} t \right)\left(e_k+t\phi_1(e_k)\right)&\text{mod } (t^2)&\\
				&=\sum_{k=1}^n\omega_{ij}e_k+t\left(\sum_{k=1}^n\rho_{ik}^{\,1}e_k+\sum_{k=1}^n\omega_{ik}\phi_1(e_k)\right)& \text{mod } (t^2)&\\
				&=\sum_{k=1}^n\omega_{ik}e_k+t\left(\sum_{k=1}^n\rho_{ik}^{\,1}e_k+\sum_{k=1}^n\omega_{ik}\sum_{p=1}^n\xi_{pk}e_p\right)&\text{mod } (t^2)&\\
				&=\sum_{k=1}^n\omega_{ik}e_k+t\sum_{k=1}^n\left(\rho_{ik}^{\,1}+\sum_{p=1}^{n}\omega_{ip}\xi_{kp}\right)e_k&\text{mod } (t^2)&;
			\end{aligned}
		\end{equation}
		and\\	
		\begin{equation}\label{eq:prop_2}
			\hspace{-0.2cm}\begin{aligned}
				\lambda_t\big(\phi_t(e_i),\phi_t(e_i)\big)
				=&\lambda_t\left(e_i+t\phi_1(e_i),e_i+t\phi_1(e_i)\right)&\text{mod } (t^2)&\\
				=&\lambda_t(e_i,e_i)+2t\lambda_t\big(e_i,\phi_1(e_i)\big)&\text{mod } (t^2)&\\
				=&\lambda_t(e_i,e_i)+2t\lambda_t\left(e_i,\sum_{p=1}^{n}\xi_{pi}e_p\right)&\text{mod } (t^2)&\\
				=&\sum_{k=1}^n \left( \omega_{ik} + \sigma_{ik}^{\,1} t \right) e_k +2t\left(\xi_{ii}\sum_{k=1}^n \left( \omega_{ik} + \sigma_{ik}^{\,1} t \right) e_k\right)&\text{mod } (t^2)&\\
				=&\sum_{k=1}^n\omega_{ik}e_k+t\sum_{k=1}^n\left(\sigma_{ik}^{\,1}+2\xi_{ii}\omega_{ik}\right)e_k&\text{mod } (t^2)&.
			\end{aligned}
		\end{equation}
		Therefore, collecting the coefficients of first order in \eqref{eq:prop_1} and \eqref{eq:prop_2}, we obtain $\rho_{ik}^{\,1}+\sum_{p=1}^{n}\omega_{ip}\xi_{kp}=\sigma_{ik}^{\,1}+2\xi_{ii}\omega_{ik}$ for all $1\leq i,k\leq n$, getting the desired conditions~\eqref{cond_2}.
	\end{proof}
	Recall that a derivation of $\E$ is a linear map $d\colon\E\longrightarrow\E$ such that $d(uv)=d(u)v+ud(v)$, for all $u,v\in\E$. In particular, in \cite[Section 3.2.6]{Tian_08}, it was proved that, if $\E$ is an evolution $\mathbb{K}$-algebra with natural basis $B=\{e_1,\dots,e_n\}$, then a linear map $d$ such that $d(e_i)=\sum_{k=1}^{n}\xi_{ki}e_k$ is a derivation if and only if it satisfies the following conditions:
	\begin{align}
		&\xi_{ji}\omega_{jk}+\xi_{ij}\omega_{ik}=0,\text{ for all }1\leq i, j, k\leq n\text{ such that }i\neq j;\label{cond_1_der}\\
		&\sum_{p=1}^{n}\omega_{ip}\xi_{kp}=2\xi_{ii}\omega_{ik},\text{ for all }1\leq i,k\leq n.\label{cond_2_der}
	\end{align}
	Notice that conditions \eqref{cond_1}–\eqref{cond_2} already mirror those in \eqref{cond_1_der}–\eqref{cond_2_der}. This resemblance suggests that derivations underlie the equivalence of deformations, as made precise in the following theorem.	
	\begin{theorem}\label{th:dif_infinit_terms}
		Let $\nu_t$ and $\lambda_t$ be two deformations of an evolution algebra $\E=(V,\mu)$ over any field $\mathbb{K}$. If they are equivalent, then there exists a linear morphism $\varphi\in\End_\mathbb{K}(V)$ such that, for all $u,v\in V$,
		\begin{align}\label{eq_identity}
			\lambda_1(u,v)-\nu_1(u,v)=\varphi(uv)-u\varphi(v)-\varphi(u)v.
		\end{align}
	\end{theorem}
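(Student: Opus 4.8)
The plan is to take $\varphi$ to be exactly $\phi_1$, the first-order coefficient of the formal isomorphism $\phi_t=\operatorname{Id}+t\phi_1+\dots$ realising the equivalence $\nu_t\cong_{\phi_t}\lambda_t$, and to verify the identity \eqref{eq_identity} directly from the two relations supplied by Proposition~\ref{prop:equiv_deform}. Since both sides of \eqref{eq_identity} are $\mathbb{K}$-bilinear maps $V\times V\to V$, it suffices to check the equality on every pair of basis vectors $(e_i,e_j)$, which naturally splits into the off-diagonal case $i\neq j$ and the diagonal case $i=j$. Throughout I would write $\varphi(e_k)=\phi_1(e_k)=\sum_{p=1}^n\xi_{pk}e_p$, with $(\xi_{ij})$ the matrix furnished by Proposition~\ref{prop:equiv_deform}, and repeatedly use the evolution relations $e_ie_p=0$ for $p\neq i$ together with $e_ie_i=\sum_k\omega_{ik}e_k$.

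For the off-diagonal case $i\neq j$, the left-hand side of \eqref{eq_identity} vanishes because $\nu_1,\lambda_1\in\mathcal{Z}^2(V)$ and hence annihilate distinct basis vectors. On the right-hand side $e_ie_j=0$, so only the two cross-terms survive; using $e_ie_p=0$ unless $p=i$ one computes $e_i\varphi(e_j)=\xi_{ij}\sum_k\omega_{ik}e_k$ and $\varphi(e_i)e_j=\xi_{ji}\sum_k\omega_{jk}e_k$, whence the right-hand side equals $-\sum_k(\xi_{ij}\omega_{ik}+\xi_{ji}\omega_{jk})e_k$. This is precisely zero by condition \eqref{cond_1}, so both sides agree. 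This case is essentially a reformulation of the fact that $\phi_1$ respects the vanishing of products of distinct basis elements.

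The diagonal case $i=j$ carries the real content, since it is where the infinitesimals actually enter. Here I would expand $\varphi(e_ie_i)=\sum_k\omega_{ik}\varphi(e_k)=\sum_k\big(\sum_p\omega_{ip}\xi_{kp}\big)e_k$ and, using commutativity, $e_i\varphi(e_i)+\varphi(e_i)e_i=2\,\xi_{ii}\sum_k\omega_{ik}e_k$. Collecting the coefficient of each $e_k$ shows the right-hand side of \eqref{eq_identity} equals $\sum_k\big(\sum_p\omega_{ip}\xi_{kp}-2\xi_{ii}\omega_{ik}\big)e_k$, while the left-hand side is $\sum_k(\sigma_{ik}^{\,1}-\rho_{ik}^{\,1})e_k$. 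These match exactly after rearranging condition \eqref{cond_2}, which reads $\rho_{ik}^{\,1}+\sum_p\omega_{ip}\xi_{kp}=\sigma_{ik}^{\,1}+2\xi_{ii}\omega_{ik}$. This completes the verification on a basis, and bilinearity then extends \eqref{eq_identity} to all $u,v\in V$.

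The only genuine insight is identifying $\phi_1$ as the correct choice of $\varphi$; this is strongly signalled by the remark preceding the theorem, namely that \eqref{cond_1}--\eqref{cond_2} mirror the derivation conditions \eqref{cond_1_der}--\eqref{cond_2_der}, so that the difference $\lambda_1-\nu_1$ measures precisely the failure of $\varphi$ to be a derivation. I do not expect a serious conceptual obstacle beyond careful index bookkeeping; the subtlest point is keeping the product notation straight, remembering that $uv$ denotes the \emph{original} multiplication $\mu$ and systematically exploiting $e_ie_p=0$ for $p\neq i$ to collapse the sums.
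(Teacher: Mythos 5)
Your proposal is correct and follows essentially the same route as the paper's proof: you take $\varphi=\phi_1$, verify \eqref{eq_identity} on pairs of natural basis elements, and dispatch the off-diagonal case via condition \eqref{cond_1} and the diagonal case via condition \eqref{cond_2} of Proposition~\ref{prop:equiv_deform}, extending by bilinearity. All intermediate computations match those in the paper, so there is nothing to add.
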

	\begin{proof}
		Given a formal isomorphism $\phi_t = \mathrm{Id} + t\phi_1 + t^2\phi_2 + \dots$ such that $\nu_t\cong_{\phi_t}\lambda_t$, let us set $\varphi \coloneqq \phi_1$, and denote its matrix with respect to the natural basis $B$ by $(\xi_{ij})_{i,j=1}^n$. We will verify \eqref{eq_identity} on the elements of the natural basis of $\mathcal{E}$, that is,
		\begin{align}\label{eq:th}
			\lambda_1(e_i,e_j)-\nu_1(e_i,e_j)=\phi_1(e_ie_j)-e_i\phi_1(e_j)-\phi_1(e_i)e_j,
		\end{align}
		for all $1\leq i,j\leq n$. The result will then follow by the bilinearity of $\nu_1$ and $\lambda_1$ and the linearity of $\phi_1$.
		On the one hand, when $i\neq j$, we have that $\nu_1(e_i,e_j)=\lambda_1(e_i,e_j)=0$ and, by Proposition~\ref{prop:equiv_deform}, it holds
		\begin{align*}
			\phi_1(e_ie_j)-e_i\phi_1(e_j)-\phi_1(e_i)e_j&=-e_i\left(\sum_{p=1}^n\xi_{pj}e_p\right)-e_j\left(\sum_{p=1}^n\xi_{pi}e_p\right)\\
			&=-\xi_{ij}e_i^2-\xi_{ji}e_j^2=-\sum_{k=1}^{n}\left(\xi_{ij}\omega_{ik}+\xi_{ji}\omega_{jk}\right)e_k=0.
		\end{align*}
		Consequently, \eqref{eq:th} is satisfied in this first case. On the other hand, when $i=j$, also as a consequence of Proposition \ref{prop:equiv_deform}, we have that
		\begin{align*}
			\phi_1(e_i^2)-2e_i\phi_1(e_i)&=\phi_1\left(\sum_{k=1}^{n}\omega_{ik}e_k\right)-2e_i\left(\sum_{p=1}^n\xi_{pi}e_p\right)=\sum_{k=1}^{n}\omega_{ik}\sum_{p=1}^n\xi_{pk}e_p-2\xi_{ii}e_i^2\\
			&=\sum_{k=1}^{n}\omega_{ik}\sum_{p=1}^n\xi_{pk}e_p-2\xi_{ii}\sum_{k=1}^n\omega_{ik}e_k=\sum_{k=1}^{n}\left(\sum_{p=1}^{n}\omega_{ip}\xi_{kp}-2\xi_{ii}\omega_{ik}\right)e_k\\
			&=\sum_{k=1}^{n}(\sigma_{ik}^1-\rho_{ik}^1)e_k=\lambda_1(e_i,e_i)-\nu_1(e_i,e_i),
		\end{align*}
		what completes the proof of \eqref{eq:th}.   
	\end{proof}
	
	\begin{definition}
		A deformation $\nu_t$ of an evolution algebra $\E=(V,\mu)$ is called \textit{trivial} if $\nu_t$ is equivalent to $\mu$.
	\end{definition}
	Hence, as a consequence of Theorem \ref{th:dif_infinit_terms}, we get the following result.
	\begin{corollary}\label{cor:def_non_trivial}
		Let $\nu_t$ be a deformation of an evolution algebra $\mathcal{E}=(V,\mu)$ over any field $\mathbb{K}$. If $\nu_t$ is a trivial deformation, then there exists a linear morphism $\varphi\in\End(V)$ such that $\nu_1(u,v)=\varphi(uv)-u\varphi(v)-\varphi(u)v$ for all $u,v\in V$.
	\end{corollary}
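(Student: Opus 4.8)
The plan is to obtain this as an essentially immediate specialisation of Theorem~\ref{th:dif_infinit_terms}, the only input being the right choice of the second deformation. First I would observe that the original product $\mu$ is itself a deformation of $\E$ in the sense of Definition~\ref{def:def}: setting all higher-order coefficients equal to zero yields the \emph{constant} deformation $\mu_t=\mu$, which satisfies the condition of naturalness \eqref{eq:def_nat} precisely because $\mu\in\mathcal{Z}^2(V)$. Its infinitesimal is $0$. By the definition of triviality, the hypothesis that $\nu_t$ is trivial says exactly that $\nu_t$ is equivalent to $\mu$, i.e.\ to this constant deformation $\mu_t$.

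Next I would feed the equivalent pair $\nu_t$ and $\mu_t$ into Theorem~\ref{th:dif_infinit_terms}, which produces a linear map $\varphi\in\End(V)$ such that, for all $u,v\in V$,
\begin{align*}
    (\mu_t)_1(u,v)-\nu_1(u,v)=\varphi(uv)-u\varphi(v)-\varphi(u)v.
\end{align*}
Since the infinitesimal of the constant deformation vanishes, $(\mu_t)_1=0$, this reads $-\nu_1(u,v)=\varphi(uv)-u\varphi(v)-\varphi(u)v$, and replacing $\varphi$ by $-\varphi$ (again an element of $\End(V)$) gives the claimed identity $\nu_1(u,v)=\varphi(uv)-u\varphi(v)-\varphi(u)v$.

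There is no genuine obstacle here; the single point that warrants a line of care is the \emph{order} in which the two deformations are matched against the roles of $\nu_t$ and $\lambda_t$ in the theorem, because the identity \eqref{eq_identity} records the asymmetric difference $\lambda_1-\nu_1$. One may either fix the resulting sign by passing from $\varphi$ to $-\varphi$ as above, or, invoking the fact that the equivalence relation is symmetric (the intertwining map $\phi_t$ is invertible, so $\nu_t\cong_{\phi_t}\mu_t$ forces $\mu_t\cong_{\phi_t^{-1}}\nu_t$), apply the theorem with $\mu_t$ in the role of the first deformation and $\nu_t$ in the role of the second, which returns the desired identity directly with no sign adjustment. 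Either route closes the argument.
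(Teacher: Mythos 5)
Your proposal is correct and is exactly the paper's argument: the paper states this corollary as an immediate consequence of Theorem~\ref{th:dif_infinit_terms}, obtained by viewing $\mu$ itself as the constant deformation and invoking the equivalence $\nu_t\cong\mu$ given by triviality. Your extra care about the order of the two deformations (absorbing the sign into $-\varphi$, or using symmetry of equivalence via $\phi_t^{-1}$) is a valid and harmless elaboration of a detail the paper leaves implicit.
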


    \begin{remark}
    Note that all the above statements remain valid in characteristic two. 
    The only difference is that the terms of the form $2\xi_{ii}\omega_{ik}$
    appearing in \eqref{cond_2} and \eqref{cond_2_der} vanish automatically, so the identities are accordingly simplified.
\end{remark}
	
	\subsection{Every evolution algebra admits a non-trivial deformation}
	In many classical varieties of algebras, such as associative and Lie algebras, an algebra is called \textit{(formally) rigid} if all its formal deformations are trivial, a property often ensured by the vanishing of its second cohomology group. In contrast, we next show that  evolution algebras over any field always admit a nontrivial first-order deformation.
	
	\begin{lemma}\label{lem:lin_dep}
		Let $\nu_t$ and $\lambda_t$ be two equivalent deformations, $\nu_t\cong_{\phi_t}\lambda_t$, of an evolution algebra $\mathcal{E}=(V,\mu)$ with natural basis $\{e_1,\dots,e_n\}$. If $\sum_{i=1}^n\alpha_i\nu_t(e_i,e_i)=0$ for some scalars $\alpha_1,\dots,\alpha_n\in\mathbb{K}$, then $\sum_{i=1}^n\alpha_i\lambda_t(\phi_t(e_i),\phi_t(e_i))=0$.
	\end{lemma}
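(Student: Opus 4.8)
The plan is to derive the conclusion directly from the equivalence relation \eqref{eq:equiv_def} together with the $\mathbb{K}[[t]]$-linearity of the formal isomorphism $\phi_t$. The guiding observation is that the hypothesis is an identity in $V[[t]]$, and applying the (invertible) map $\phi_t$ preserves it; the equivalence condition then converts each term $\phi_t(\nu_t(e_i,e_i))$ into the corresponding term $\lambda_t(\phi_t(e_i),\phi_t(e_i))$, so that the whole dependence relation is transported across the equivalence.

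Concretely, I would proceed as follows. First, apply $\phi_t$ to both sides of the hypothesis $\sum_{i=1}^n\alpha_i\nu_t(e_i,e_i)=0$, obtaining $\phi_t\big(\sum_{i=1}^n\alpha_i\nu_t(e_i,e_i)\big)=0$. Since $\phi_t$ is $\mathbb{K}[[t]]$-linear, and in particular $\mathbb{K}$-linear, it commutes with the finite sum and with scalar multiplication by each $\alpha_i\in\mathbb{K}$, so the left-hand side equals $\sum_{i=1}^n\alpha_i\,\phi_t\big(\nu_t(e_i,e_i)\big)$. Next, I would invoke the equivalence condition \eqref{eq:equiv_def} in the diagonal case $i=j$, namely $\phi_t\big(\nu_t(e_i,e_i)\big)=\lambda_t\big(\phi_t(e_i),\phi_t(e_i)\big)$, and substitute it term by term. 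This immediately yields $\sum_{i=1}^n\alpha_i\lambda_t\big(\phi_t(e_i),\phi_t(e_i)\big)=0$, which is precisely the desired identity.

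There is essentially no analytic or combinatorial obstacle here: the argument is a short manipulation once the correct facts are assembled. The only point that deserves care is the interplay of scalars and the ground ring, i.e.\ the fact that $\mathbb{K}\subseteq\mathbb{K}[[t]]$ guarantees that a $\mathbb{K}[[t]]$-linear map is automatically $\mathbb{K}$-linear, so that pulling the constants $\alpha_i$ through $\phi_t$ is legitimate. This lemma will most plausibly serve as a technical stepping stone toward Theorem~\ref{th:nonrigid}, since it shows that a linear-dependence relation among the deformed squares $\nu_t(e_i,e_i)$ is preserved, under equivalence, as a relation among the images $\lambda_t(\phi_t(e_i),\phi_t(e_i))$.
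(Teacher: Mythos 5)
Your proof is correct and follows exactly the paper's argument: apply $\phi_t$ to the dependence relation, use its linearity to pull the sum and the scalars $\alpha_i$ through, and then substitute the equivalence condition \eqref{eq:equiv_def} in the diagonal case $i=j$. The paper's proof is precisely this one-line computation, so there is nothing to add.
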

	\begin{proof}
		It follows straightforwardly from \eqref{eq:equiv_def}. In fact,
		\[0=\phi_t\left(\sum_{i=1}^n\alpha_i\nu_t(e_i,e_i)\right)=\sum_{i=1}^n\alpha_i\phi_t\big(\nu_t(e_i,e_i)\big)=\sum_{i=1}^n\alpha_i\lambda_t\big(\phi_t(e_i),\phi_t(e_i)\big).\]        
	\end{proof}
	\begin{theorem}\label{th:nonrigid}
		Every evolution algebra admits a nontrivial first-order deformation.
	\end{theorem}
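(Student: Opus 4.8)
The plan is to reduce the whole statement to a single triviality criterion, Corollary~\ref{cor:def_non_trivial}: a first-order deformation $\nu_t=\mu+t\nu_1$ is nontrivial as soon as its infinitesimal $\nu_1$ is \emph{not} a coboundary, i.e.\ cannot be written as $B_\varphi(u,v):=\varphi(uv)-u\varphi(v)-\varphi(u)v$ for any $\varphi\in\End_{\mathbb{K}}(V)$. The key structural remark I would make first is that, for evolution algebras, there is no integrability obstruction: the only constraint on a deformation is the naturalness condition~\eqref{eq:def_nat}, so \emph{every} $\nu_1\in\mathcal{Z}^2(V)$ already defines a genuine first-order deformation $\mu+t\nu_1$, in sharp contrast with the Maurer--Cartan-type equations that govern Lie or associative deformations. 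Thus the problem collapses to showing that the space of coboundaries lying inside $\mathcal{Z}^2(V)$ is a \emph{proper} subspace of $\mathcal{Z}^2(V)$, which has dimension $n^2$.

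Next I would compute $B_\varphi$ on the natural basis, reusing the bookkeeping from the proof of Theorem~\ref{th:dif_infinit_terms}. Writing the matrix of $\varphi$ as $(\xi_{ij})$, one finds for $i\neq j$ that $B_\varphi(e_i,e_j)=-\sum_{k}(\xi_{ij}\omega_{ik}+\xi_{ji}\omega_{jk})e_k$, so $B_\varphi\in\mathcal{Z}^2(V)$ if and only if $\varphi$ satisfies condition~\eqref{cond_1}. Denoting by $W\subseteq\End_{\mathbb{K}}(V)$ the subspace of such $\varphi$, the coboundaries inside $\mathcal{Z}^2(V)$ are exactly the image $B(W)$ (if $B_\varphi\in\mathcal{Z}^2(V)$ then necessarily $\varphi\in W$), whence $\dim B(W)\leq\dim W$.

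The decisive step is then a dimension count. If $\mu\neq0$, pick $\omega_{i_0k_0}\neq0$ and any index $j\neq i_0$ (available because $n\geq2$); the corresponding instance of~\eqref{cond_1}, namely $\xi_{i_0 j}\omega_{i_0 k_0}+\xi_{j i_0}\omega_{j k_0}=0$, is a nontrivial linear constraint, so $W\subsetneq\End_{\mathbb{K}}(V)$ and therefore $\dim B(W)\leq\dim W<n^2$. If instead $\mu=0$, then all products vanish and $B_\varphi=0$ for every $\varphi$, so $B(W)=\{0\}$ is trivially proper. In both cases one may choose $\nu_1\in\mathcal{Z}^2(V)\setminus B(W)$, and $\nu_t=\mu+t\nu_1$ is the required nontrivial first-order deformation.

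I expect the main obstacle to be the \emph{regular} case $\det(\omega)\neq0$, where no rank-increasing deformation is available and one really must rely on the dimension count, i.e.\ on checking that~\eqref{cond_1} genuinely cuts $\End_{\mathbb{K}}(V)$ down. For the non-regular case I would keep a more geometric alternative in reserve: choosing $\nu_1=\operatorname{Id}$ (so $\nu_1(e_i,e_i)=e_i$) gives $\det(\omega+tI)=t^n+\cdots\neq0$, so the deformed algebra is regular over $\mathbb{K}((t))$ while $\mathcal{E}$ is not; since Lemma~\ref{lem:lin_dep} guarantees that equivalences preserve the linear dependencies among the squares $e_i^2$ (equivalently, the $\mathbb{K}((t))$-rank of the structure matrix), the deformation cannot be trivial. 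Finally, I would flag the one genuine caveat that the argument exposes: it requires $n\geq2$, since a one-dimensional algebra with $e_1^2\neq0$ is in fact rigid, so that degenerate case must be understood as excluded from the statement.
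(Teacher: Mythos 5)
Your main argument is correct, and it takes a genuinely different route from the paper's proof. The paper argues constructively in two cases: for regular algebras it uses the necessary conditions \eqref{cond_1}--\eqref{cond_2} of Proposition~\ref{prop:equiv_deform} to pin down exactly which infinitesimals a trivial deformation can have, and then chooses structure constants $\rho^1_{ik}$ violating them; for non-regular algebras it considers the $2^n$ diagonal deformations $\nu_1=\operatorname{diag}(\pm1,\dots,\pm1)$ and derives a contradiction via Lemma~\ref{lem:lin_dep} and a sign-flipping argument. You instead give a single, case-free linear-algebra argument: the coboundaries lying inside $\mathcal{Z}^2(V)$ are exactly $\delta_\mu(W)$ with $W$ cut out by the conditions \eqref{cond_1}, and one nonzero instance of \eqref{cond_1} (available whenever $\mu\neq0$ and $n\geq2$) forces $\dim\delta_\mu(W)\leq\dim W<n^2=\dim\mathcal{Z}^2(V)$, while for $\mu=0$ every coboundary vanishes; then the contrapositive of Corollary~\ref{cor:def_non_trivial} finishes. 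In effect you prove $\mathcal{H}^2(\mathcal{E})\neq0$ \emph{first} and deduce the theorem from it, reversing the paper's logical order (the paper obtains nontriviality of $\mathcal{H}^2$ as a corollary of this theorem); this is non-circular, since Corollary~\ref{cor:def_non_trivial} rests only on Proposition~\ref{prop:equiv_deform} and Theorem~\ref{th:dif_infinit_terms}. Your approach buys uniformity and brevity, and it also correctly isolates the one genuine exception: the one-dimensional algebra $e_1^2=\omega e_1$, $\omega\neq0$, is indeed rigid (any deformation $\nu_t(e_1,e_1)=g(t)e_1$ is trivialised by $\phi_t(e_1)=\omega^{-1}g(t)e_1$), a case where the paper's regular-case argument silently fails because it needs an off-diagonal pair $i\neq k$ to violate. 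The paper's approach buys explicitness: it exhibits concrete nontrivial infinitesimals rather than a non-constructive dimension count.

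One caveat: your ``reserve'' argument for the non-regular case is not sound as sketched and should be dropped or repaired. Lemma~\ref{lem:lin_dep} does \emph{not} preserve ``the $\mathbb{K}((t))$-rank of the structure matrix''; it only yields $\sum_i\alpha_i\nu_t\big(\phi_t(e_i),\phi_t(e_i)\big)=0$, and since the vectors $\phi_t(e_i)$ need not form a \emph{natural} basis of $\nu_t$, this relation does not contradict the invertibility of the structure matrix of $\nu_t$: squares of a non-natural basis of a regular evolution algebra can be dependent (in $e_1^2=e_1$, $e_2^2=e_2$, the basis $u_{1,2}=e_1\pm e_2$ satisfies $u_1^2=u_2^2$). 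Closing precisely this gap is what the paper's sign-flipping over $\{\pm1\}^n$ accomplishes. Since your dimension count already covers the non-regular case, the flaw is harmless to your proof as a whole.
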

	\begin{proof}
		Let $\mathcal{E}=(V,\mu)$ be an evolution algebra with natural basis $\{e_1,\dots,e_n\}$ and structure matrix $(\omega_{ij})_{i,j=1}^n$. We show that there always exists a nontrivial first-order deformation $\nu_t=\mu+\nu_1t$. For the computations that follow, denote by $(\rho_{ij}^1)_{i,j=1}^n$ the structure matrix of $\nu_1$. We consider the regular and the nonregular cases separately:
		
		\begin{enumerate}
			\item If $\mathcal{E}$ is regular then, from condition \eqref{cond_1}, it is deduced that 
			\begin{align}\label{eq:system}
				\begin{pmatrix}
					\omega_{i1} & \dots & \omega_{in}\\
					\omega_{j1} & \dots & \omega_{jn}
				\end{pmatrix}^t
				\begin{pmatrix}
					\xi_{ij}\\
					\xi_{ji}
				\end{pmatrix}=0
			\end{align}
			for all $1\leq i\neq j\leq n$. As $\mathcal{E}$ is regular, \eqref{eq:system} is a homogeneous system with a unique solution, and such solution is the trivial one. Then, $\xi_{ij}=0$ for all $1\leq i\neq j\leq n$. Next, from \eqref{cond_2} it is deduced that $\rho_{ii}^1=-\omega_{ii}\xi_{ii}$ for all $1\leq i\leq n$. If $\omega_{ii}=0$ for some $1\leq i\leq n$, then just consider a scalar $\rho_{ii}^1\neq0$, what yields that $\nu_t$ is a nontrivial deformation. Otherwise, if $\omega_{ii}\neq0$ for all $1\leq i\leq n$, we have that $\xi_{ii}=-\frac{\rho_{ii}^1}{\omega_{ii}}$ for all $1\leq i\leq n$. Consequently, also from \eqref{cond_2}, we get that necessarily $\rho_{ik}^1=\omega_{ik}(\xi_{kk}-2\xi_{ii})$. Then, taking a $\rho_{ik}^1$ which does not satisfy the previous relation yields again that $\nu_t$ is a nontrivial deformation, which completes the proof.
			\item If $\mathcal{E}$ is nonregular, consider the matrix $(\rho_{ij}^1)_{i,j=1}^n=\operatorname{diag}(\beta_1,\dots,\beta_n)$ with each $\beta_i=\pm1$. For the sake of contradiction, assume the existence of a formal isomorphism $\phi_t$ such that $\phi_t\big(\mu(e_i,e_i)\big)=\nu_t\big(\phi_t(e_i),\phi_t(e_i)\big)$ for all $1\leq i\leq n$ and for every $(\beta_1,\dots,\beta_n)\in\{\pm1\}^n$. In fact,
			\begin{align*}          \nu_t\big(\phi_t(e_i),\phi_t(e_i)\big)&=\nu_t(e_i,e_i)+2t\nu_t\big(e_i,\phi_1(e_i)\big)& \text{mod } (t^2)&\\
				&=\mu(e_i,e_i)+t\beta_ie_i+ 2t\xi_{ii}\big(\mu(e_i,e_i)+t\beta_ie_i\big) &\text{mod } (t^2)&\\
				&=\mu(e_i,e_i)+t\big(2\xi_{ii}\mu(e_i,e_i) +\beta_ie_i\big)& \text{mod } (t^2)&.
			\end{align*}
			At the same time, as $\mathcal{E}$ is not regular, there exists a non-empty subset $\Lambda\subset\{1,\dots,n\}$ such that $\sum_{i\in\Lambda}\alpha_i\mu(e_i,e_i)=0$ for some scalars $\alpha_i\in\mathbb{K}^*$, $i\in\Lambda$. Consequently, by Lemma~\ref{lem:lin_dep}, it also holds that $\sum_{i\in\Lambda}\alpha_i\nu_t\big(\phi_t(e_i),\phi_t(e_i)\big)=0$. Hence, looking at the coefficient of $t$, we have that
			\begin{align}\label{cond_3}
				0=\sum_{i\in\Lambda}\alpha_i\big(2\xi_{ii}\mu(e_i,e_i) +\beta_ie_i\big)\implies \sum_{i\in\Lambda}2\alpha_i\xi_{ii}\mu(e_i,e_i)= -\sum_{i\in\Lambda}\alpha_i\beta_ie_i.
			\end{align}
			We now distinguish two subcases according to the characteristic of the base field:
			\begin{enumerate}
				\item[2.1] If the characteristic is two, it suffices to take $\beta_i=1$ for all $i=1,\dots,n$. 
				In this case, condition \eqref{cond_3} yields $\sum_{i\in\Lambda}\alpha_ie_i=0$, a contradiction.
				\item[2.2] If the characteristic is not two, we claim  that \eqref{cond_3} is not satisfied for some $(\beta_1,\dots,\beta_n)\in\{\pm1\}^n$. Otherwise, if we changed the sign of $\beta_{i_0}$ for any index $i_0\in\varLambda$, there would also exist another formal isomorphism $\overline{\phi_t}$ and, consequently, other scalars $(\overline{\xi_{ii}})_{i\in\Lambda}$ such that
				\begin{align} \label{cond_4}\sum_{i\in\Lambda}2\alpha_i\overline{\xi_{ii}}\mu(e_i,e_i)=\beta_{i_0}\alpha_{i_0}e_{i_0}-\sum_{i\in\Lambda\backslash\{i_0\}}\alpha_i\beta_ie_i.
				\end{align} 
				By subtracting \eqref{cond_3} and \eqref{cond_4}, we get that
				\begin{align*}
					\sum_{i\in\Lambda}2\alpha_i(\xi_{ii}-\overline{\xi_{ii}})\mu(e_i,e_i)&=-2\beta_{i_0}\alpha_{i_0}e_{i_0},
				\end{align*}
				and consequently $e_{i_0}\in\spa\{\mu(e_i,e_i),i\in\Lambda\}$. Repeating this process for every $i_0\in\varLambda$, we get that $\spa\{e_i\colon i\in\Lambda\}=\spa\{\mu(e_i,e_i)\colon i\in\Lambda\}$, a contradiction with the fact that $\sum_{i\in\Lambda}\alpha_i\mu(e_i,e_i)=0$ for some $\alpha_i\in\mathbb{K}^*$, $i\in\Lambda$.
			\end{enumerate} 
			In both cases we get a contradiction, thus the result follows in the nonregular setting.
		\end{enumerate} 
		Therefore, it is always possible to construct a first-order nontrivial deformation, what yields the claim.
	\end{proof}
	Since all computations in the previous proof are done modulo $t^2$, the same conclusion applies in the context of infinitesimal deformations.
	\begin{corollary}\label{rem:inf_def}
		Every evolution algebra admits a nontrivial infinitesimal deformation.
	\end{corollary}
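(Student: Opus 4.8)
The plan is to observe that the first-order deformation produced in the proof of Theorem~\ref{th:nonrigid} is itself an infinitesimal deformation, and that its nontriviality was established entirely modulo $t^2$, so nothing genuinely new needs to be proved. Concretely, I would first recall that an infinitesimal deformation of $\mathcal{E}=(V,\mu)$ is, by definition, a deformation over $V\otimes\mathbb{K}[t]/(t^2)$, that is, an element of $\operatorname{InfDef}(\mathcal{E})$. Since $t^2=0$ in $\mathbb{K}[t]/(t^2)$, every infinitesimal deformation has the shape $\mu+\nu_1 t$, and conversely any first-order deformation $\nu_t=\mu+\nu_1 t$ reduces modulo $t^2$ to such an element. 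Thus the specific deformation $\nu_t=\mu+\nu_1 t$ built in the proof of Theorem~\ref{th:nonrigid} --- with structure matrix $(\rho_{ij}^1)$ chosen to violate the equivalence relations in the regular case, and equal to $\operatorname{diag}(\beta_1,\dots,\beta_n)$ with $\beta_i=\pm1$ in the nonregular case --- may be viewed directly as a member of $\operatorname{InfDef}(\mathcal{E})$.

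Next I would verify that triviality in $\operatorname{InfDef}(\mathcal{E})$ is controlled by exactly the same obstruction. An infinitesimal deformation is trivial precisely when it is equivalent to $\mu$ over $\mathbb{K}[t]/(t^2)$, and any such equivalence is realised by a formal isomorphism which, modulo $t^2$, takes the form $\operatorname{Id}+t\phi_1$; hence only the linear coefficient $\phi_1$, with matrix $(\xi_{ij})$, intervenes. This is exactly the regime of Proposition~\ref{prop:equiv_deform}: an equivalence of $\nu_t$ with $\mu$ would force conditions \eqref{cond_1}--\eqref{cond_2} with all $\sigma_{ik}^1=0$, which is precisely what the proof of Theorem~\ref{th:nonrigid} shows is impossible for the chosen $(\rho_{ij}^1)$. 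Since every computation there --- the derivation of \eqref{cond_1}--\eqref{cond_2} and the invocation of Lemma~\ref{lem:lin_dep} --- is already carried out modulo $t^2$, it transfers verbatim to the truncated setting and yields the same contradiction.

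The only point demanding a word of care, and the mildest conceivable obstacle, is confirming that the notion of equivalence of infinitesimal deformations agrees modulo $t^2$ with the one used in Theorem~\ref{th:nonrigid}; this is immediate, because the higher-order coefficients of $\phi_t$ never enter a computation performed modulo $t^2$, and the original argument invoked only $\phi_1$. Consequently the chosen $\nu_t$ is a nontrivial infinitesimal deformation, which proves the claim.
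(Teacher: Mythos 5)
Your proposal is correct and takes essentially the same approach as the paper: the paper's entire justification for this corollary is the one-line observation that all computations in the proof of Theorem~\ref{th:nonrigid} are carried out modulo $t^2$, so the argument transfers verbatim to deformations over $\mathbb{K}[t]/(t^2)$. Your additional check that equivalence in $\operatorname{InfDef}(\E)$ involves only the linear coefficient $\phi_1$ simply makes explicit what the paper leaves implicit.
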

	\section{Formal deformations from a cohomological perspective}\label{sec:4}
	Note that Theorem \ref{th:dif_infinit_terms} expresses the difference of the infinitesimals of two equivalent deformations as a derivation-like expression. This result can be viewed as the natural analogue of a classical fact in deformation theory of associative and Lie algebras, where such difference is a $2$-coboundary in the Hochschild or Chevalley-Eilenberg cohomology, respectively (see \cite{G_64} for associative algebras and see \cite{NR_67} for Lie algebras). Moreover, in these two classical cases, the elements of the second cohomology group can be seen as infinitesimal deformations (up to equivalence).
	Although evolution algebras do not possess a standard cohomology theory, it is possible to construct a cohomological framework that captures, in an analogous way, the behaviour of infinitesimal deformations. 
	
	Let $\E=(V,\mu)$ be an evolution algebra. Inspired by the associative case, define the differential operator $\delta$ such that $\delta_\mu\varphi(u,v)\coloneqq\varphi(\mu(u,v))-\mu(u,\varphi(v))-\mu(\varphi(u),v)$ for all $\varphi\in\operatorname{End}_{\mathbb{K}}(V)$ and for all $u,v\in V$. Then, considering the image of $\delta_\mu$ restricted to the space of $2$-cocycles $\mathcal{Z}^2(V)$, we define the \textit{space of $2$-coboundaries}:
	\[\mathcal{B}^2(\mathcal{E})\coloneqq\{\theta\in\mathcal{Z}^2(V)\mid \theta=\delta_\mu\varphi\text{ for some }\varphi\in\operatorname{End}_{\mathbb{K}}(V)\}.\]
	It is straightforward to check that $\mathcal{B}^2(\E)$ is a   subspace of $\mathcal{Z}^2(V)$. Hence, we present the following definition.
	\begin{definition}\label{def:cohom}
		Let $\E=(V,\mu)$ be an evolution algebra. The \textit{second cohomology space} of $\E$ is defined as the quotient $\mathcal{H}^2(\E)\coloneqq\mathcal{Z}^2(V)/\mathcal{B}^2(\E)$.
	\end{definition}
	We can now state the following results in view of Theorem~\ref{th:dif_infinit_terms} and Corollary~\ref{cor:def_non_trivial}.
	\begin{corollary}
		If $\nu_t$ and $\lambda_t$ are two equivalent formal evolution deformations of an evolution algebra
		$\E$, then the difference of their infinitesimals is equal to zero  up to a $2$-coboundary.
	\end{corollary}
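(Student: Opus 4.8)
The plan is to read the statement off directly from Theorem~\ref{th:dif_infinit_terms} together with the definition of the space of $2$-coboundaries. First I would invoke the hypothesis that $\nu_t$ and $\lambda_t$ are equivalent, so that Theorem~\ref{th:dif_infinit_terms} supplies a linear map $\varphi\in\End_\mathbb{K}(V)$ satisfying $\lambda_1(u,v)-\nu_1(u,v)=\varphi(uv)-u\varphi(v)-\varphi(u)v$ for all $u,v\in V$. The key observation is that the right-hand side is \emph{exactly} $\delta_\mu\varphi(u,v)$, by the definition of the differential operator $\delta_\mu$ given just before Definition~\ref{def:cohom}. Thus $\lambda_1-\nu_1=\delta_\mu\varphi$ as bilinear maps.

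It then remains to check that $\lambda_1-\nu_1$ is a genuine $2$-coboundary, i.e.\ that it belongs to $\mathcal{B}^2(\E)$. Since both $\nu_t$ and $\lambda_t$ are formal evolution deformations, the naturalness condition \eqref{eq:def_nat} forces each order-$k$ coefficient to be a $2$-cocycle; in particular $\nu_1,\lambda_1\in\mathcal{Z}^2(V)$. Hence their difference $\lambda_1-\nu_1$ again lies in $\mathcal{Z}^2(V)$, and being of the form $\delta_\mu\varphi$ it satisfies both requirements in the definition of $\mathcal{B}^2(\E)$.

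Combining these steps yields $\lambda_1-\nu_1=\delta_\mu\varphi\in\mathcal{B}^2(\E)$, so in the quotient $\mathcal{H}^2(\E)=\mathcal{Z}^2(V)/\mathcal{B}^2(\E)$ the classes of $\nu_1$ and $\lambda_1$ coincide; equivalently, the difference of the infinitesimals vanishes up to a $2$-coboundary, which is the assertion. I do not anticipate any substantial obstacle here, as the result is essentially a reinterpretation of Theorem~\ref{th:dif_infinit_terms} in the cohomological language of Definition~\ref{def:cohom}. The only point requiring a moment's care is confirming that the coboundary $\delta_\mu\varphi$ genuinely lands in $\mathcal{Z}^2(V)$, but this is automatic once one notes that it equals the difference of two cocycles.
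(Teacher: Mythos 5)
Your proposal is correct and follows exactly the route the paper intends: the corollary is stated ``in view of Theorem~\ref{th:dif_infinit_terms},'' and your argument---applying that theorem to produce $\varphi$, recognising the right-hand side as $\delta_\mu\varphi$, and verifying via the naturalness condition \eqref{eq:def_nat} that $\lambda_1-\nu_1\in\mathcal{Z}^2(V)$ so that it genuinely lies in $\mathcal{B}^2(\E)$---is precisely that reinterpretation. The extra check that the difference is a $2$-cocycle is a point the paper leaves implicit, and including it makes the argument fully rigorous.
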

	\begin{corollary}\label{cor:inf_h2}
		Given an evolution algebra $\mathcal{E}$, there exists a bijection between its infinitesimal deformations (up to equivalence) and the elements of  $\mathcal{H}^2(\E)$.
	\end{corollary}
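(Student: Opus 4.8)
The plan is to construct an explicit map $\Psi\colon\operatorname{InfDef}(\E)\longrightarrow\mathcal{H}^2(\E)$, show it is surjective, show it factors through equivalence, and finally show the induced map $\overline{\Psi}$ on equivalence classes is injective. The only genuinely new ingredient beyond what is already proved is the converse of Theorem~\ref{th:dif_infinit_terms}; everything else is bookkeeping.

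First I would identify $\operatorname{InfDef}(\E)$ with $\mathcal{Z}^2(V)$. Working modulo $t^2$, every infinitesimal deformation has the form $\nu_t=\mu+\nu_1 t$, and the naturalness condition~\eqref{eq:def_nat} is equivalent to $\nu_1\in\mathcal{Z}^2(V)$, as already observed in the remark following Definition~\ref{def:def}. The decisive point is that, because evolution algebras are not defined by identities, no further integrability constraint is imposed on $\nu_1$: every element of $\mathcal{Z}^2(V)$ gives a bona fide infinitesimal deformation $\mu+\nu_1 t$. Hence $\nu_t\mapsto\nu_1$ is a bijection $\operatorname{InfDef}(\E)\to\mathcal{Z}^2(V)$, and composing with the canonical projection $\pi\colon\mathcal{Z}^2(V)\to\mathcal{H}^2(\E)=\mathcal{Z}^2(V)/\mathcal{B}^2(\E)$ yields a surjection $\Psi\colon\operatorname{InfDef}(\E)\to\mathcal{H}^2(\E)$ sending $\mu+\nu_1 t$ to $\nu_1+\mathcal{B}^2(\E)$.

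Next I would check that $\Psi$ respects equivalence. If $\nu_t\cong_{\phi_t}\lambda_t$, then Theorem~\ref{th:dif_infinit_terms}, applied in the infinitesimal setting (its proof uses only computations modulo $t^2$), yields $\lambda_1-\nu_1=\delta_\mu\varphi$ with $\varphi=\phi_1$; thus $\lambda_1-\nu_1\in\mathcal{B}^2(\E)$ and $\Psi(\nu_t)=\Psi(\lambda_t)$. Consequently $\Psi$ descends to a well-defined surjective map $\overline{\Psi}$ from the set of equivalence classes of infinitesimal deformations onto $\mathcal{H}^2(\E)$.

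The heart of the argument, and the step I expect to be the main obstacle, is the injectivity of $\overline{\Psi}$, which amounts to the converse of Theorem~\ref{th:dif_infinit_terms}. Suppose $\Psi(\nu_t)=\Psi(\lambda_t)$, so $\lambda_1-\nu_1=\delta_\mu\varphi$ for some $\varphi\in\End_\mathbb{K}(V)$. I would set $\phi_t=\operatorname{Id}+t\varphi$, which is invertible modulo $t^2$, and verify condition~\eqref{eq:equiv_def} by a direct expansion: modulo $t^2$ the first-order coefficient of $\phi_t\big(\nu_t(u,v)\big)$ is $\nu_1(u,v)+\varphi\big(\mu(u,v)\big)$, whereas that of $\lambda_t\big(\phi_t(u),\phi_t(v)\big)$ is $\nu_1(u,v)+\mu\big(\varphi(u),v\big)+\mu\big(u,\varphi(v)\big)+(\lambda_1-\nu_1)(u,v)$, and the two coincide precisely because $\lambda_1-\nu_1=\delta_\mu\varphi$. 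This is exactly the computation in the proof of Theorem~\ref{th:dif_infinit_terms} run in reverse. Hence $\nu_t\cong_{\phi_t}\lambda_t$, so $\overline{\Psi}$ is injective and therefore a bijection, which is the desired statement. The subtlety to watch throughout is that $\mathcal{B}^2(\E)$ is defined as the image of $\delta_\mu$ \emph{intersected with} $\mathcal{Z}^2(V)$; but since $\nu_1$ and $\lambda_1$ both lie in $\mathcal{Z}^2(V)$, their difference automatically does too, so the coboundaries arising here are always legitimate elements of $\mathcal{B}^2(\E)$ and the argument closes without extra hypotheses.
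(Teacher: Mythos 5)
Your proof is correct and follows the route the paper intends: the paper states this corollary without an explicit proof, ``in view of'' Theorem~\ref{th:dif_infinit_terms}, and your argument is precisely that theorem (for well-definedness on equivalence classes) together with its converse (for injectivity), which you correctly identify as the only genuinely new ingredient. Your mod-$t^2$ verification that $\phi_t=\operatorname{Id}+t\varphi$ realises the equivalence is the detail the paper leaves implicit, and your closing observation is exactly the point that makes it work: since $\lambda_1-\nu_1\in\mathcal{Z}^2(V)$, evaluating $\delta_\mu\varphi$ at distinct basis vectors forces $\mu(\varphi(e_i),e_j)+\mu(e_i,\varphi(e_j))=0$, so the intertwining condition \eqref{eq:equiv_def} holds automatically in the off-diagonal case.
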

	Moreover, as a consequence of Theorem \ref{th:nonrigid} and Remark \ref{rem:inf_def}, we also have the following result.
	\begin{corollary}The cohomology space
		$\mathcal{H}^2(\E)$ is nontrivial for any finite-dimensional evolution algebra $\mathcal{E}$.
	\end{corollary}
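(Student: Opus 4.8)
The plan is to read off the result directly from the identification of infinitesimal deformations with cohomology classes. By Corollary~\ref{cor:inf_h2} there is a bijection between the infinitesimal deformations of $\E$ up to equivalence and the elements of $\mathcal{H}^2(\E)$, so it suffices to exhibit two infinitesimal deformations that are \emph{not} equivalent: their images under this bijection are then two distinct classes, forcing $\mathcal{H}^2(\E)$ to contain more than one element and hence to be nontrivial.

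First I would pin down which class corresponds to the trivial deformation. An infinitesimal deformation $\nu_t=\mu+\nu_1 t$ is trivial precisely when it is equivalent to $\mu$, and in that case Corollary~\ref{cor:def_non_trivial} yields $\nu_1=\delta_\mu\varphi=\varphi(\mu(\cdot,\cdot))-\mu(\cdot,\varphi(\cdot))-\mu(\varphi(\cdot),\cdot)$ for some $\varphi\in\End_{\mathbb{K}}(V)$; that is, $\nu_1\in\mathcal{B}^2(\E)$, so $[\nu_1]=0$ in $\mathcal{H}^2(\E)$. Thus the bijection of Corollary~\ref{cor:inf_h2} sends the equivalence class of the trivial deformation to $0\in\mathcal{H}^2(\E)$.

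Next I would invoke the existence result. By Corollary~\ref{rem:inf_def} (a consequence of Theorem~\ref{th:nonrigid}), $\E$ admits a nontrivial infinitesimal deformation $\nu_t$, i.e. one that is not equivalent to $\mu$. Under the bijection of Corollary~\ref{cor:inf_h2} its class is therefore different from the class of the trivial deformation, which by the previous step is $0$. Hence $\mathcal{H}^2(\E)$ contains a nonzero element and is nontrivial, as claimed.

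There is no genuine computational obstacle, since the two ingredients are already established; the only point requiring care is the bookkeeping of the second step, namely checking that the bijection of Corollary~\ref{cor:inf_h2} matches the \emph{trivial} deformation with the \emph{zero} class of $\mathcal{H}^2(\E)$ (rather than with some other fixed class). This is guaranteed by the very definition of $\mathcal{B}^2(\E)$ as the image of $\delta_\mu$, which is exactly the derivation-like expression appearing in Theorem~\ref{th:dif_infinit_terms} and Corollary~\ref{cor:def_non_trivial}; once this identification is in place, nontriviality of the deformation translates verbatim into nonvanishing of the corresponding cohomology class.
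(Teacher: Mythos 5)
Your proposal is correct and follows essentially the same route as the paper, which derives this corollary directly from Theorem~\ref{th:nonrigid} (via Corollary~\ref{rem:inf_def}) together with the bijection of Corollary~\ref{cor:inf_h2}; your additional bookkeeping that the trivial deformation corresponds to the zero class via Corollary~\ref{cor:def_non_trivial} simply makes explicit what the paper leaves implicit.
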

	\subsection{Explicit computation of the second cohomology space}
	The computation of the second cohomology space (particularly, of the space of $2$-coboundaries) of an evolution algebra is feasible in low dimensions and becomes particularly accessible in cases with sparse structure matrices, such as the nilpotent setting. For the sake of completeness, we include here the explicit computation of this space for all two-dimensional evolution algebras over~$\mathbb{C}$, classified in \cite[Theorem~4.1]{CLOR_14}. As a consequence, we also obtain a characterisation (up to equivalence) of the infinitesimal deformations corresponding to each isomorphism class.
	
	\begin{theorem} \label{th:cohom_2}
		The spaces of $2$-coboundaries and all the infinitesimal deformations (up to equivalence) for all two-dimensional evolution algebras over $\mathbb{C}$ are presented in Table \ref{table:inf_def}.
		\begin{table}[H]
			\renewcommand{\arraystretch}{1.6}
			\resizebox{1\textwidth}{!}{
				\begin{tabular}{|| l | l  | l | l ||}
					\hline
					\textbf{$\E$} & \textbf{Product}  & \textbf{$\mathcal{B}^2(\E)$} & \textbf{$\operatorname{InfDef}(\E)$}  \\\hline\hline
					$\mathcal{E}_1$ & $e_1^2=e_1$, $e_2^2=0$   	& $\spa\left\{\big(\begin{smallmatrix}
						1 & 0 \\ 0 & 0
					\end{smallmatrix}\big),\big(\begin{smallmatrix}
						0 & 1 \\ 0 & 0
					\end{smallmatrix}\big)\right\}$ & $\big(\begin{smallmatrix}
						1 & 0 \\ 0 & 0
					\end{smallmatrix}\big)+\big(\begin{smallmatrix}
						0 & 0 \\ \alpha & \beta
					\end{smallmatrix}\big)t,\; \alpha,\beta\in\mathbb{C}$ \\
					\hline
					$\mathcal{E}_2$ & $e_1^2=e_2^2=e_1$ 	&  $\spa\left\{\big(\begin{smallmatrix}
						-1 & 0 \\ 1 & 0
					\end{smallmatrix}\big),\big(\begin{smallmatrix}
						0 & 1 \\ 0 & 1
					\end{smallmatrix}\big),\big(\begin{smallmatrix}
						0 & 0 \\ -2 & 0
					\end{smallmatrix}\big)\right\}$ & $\big(\begin{smallmatrix}
						1 & 0 \\ 1 & 0
					\end{smallmatrix}\big)+\big(\begin{smallmatrix}
						0 & 0 \\ 0 & \alpha
					\end{smallmatrix}\big)t,\; \alpha\in\mathbb{C}$  \\\hline
					$\E_3$ & $e_1^2=-e_2^2=e_1+e_2$  & $\spa\left\{\big(\begin{smallmatrix}
						1 & 1 \\ -1 & -1
					\end{smallmatrix}\big),\big(\begin{smallmatrix}
						0 & 1 \\ 2 & 1
					\end{smallmatrix}\big)\right\}$ & $\big(\begin{smallmatrix}
						1 & 1 \\ -1 & -1
					\end{smallmatrix}\big)+\big(\begin{smallmatrix}
						0 & 0 \\ \alpha & \beta
					\end{smallmatrix}\big)t,\; \alpha,\beta\in\mathbb{C}$ \\\hline
					$\E_4$ & $e_1^2=e_2$, $e_2^2=0$ &  $\spa\left\{\big(\begin{smallmatrix}
						0 & 1 \\ 0 & 0
					\end{smallmatrix}\big)\right\}$ & $\big(\begin{smallmatrix}
						0 & 1 \\ 0 & 0
					\end{smallmatrix}\big)+\big(\begin{smallmatrix}
						\alpha & 0 \\ \beta & \gamma
					\end{smallmatrix}\big)t,\; \alpha,\beta,\gamma\in\mathbb{C}$ \\\hline
					$\E_5(a_2,a_3)$ &$e_1^2=e_1+a_2e_2$  &$\spa\left\{\big(\begin{smallmatrix}
						-1 & -2a_2 \\ a_3 & 0
					\end{smallmatrix}\big),\big(\begin{smallmatrix}
						0 & a_2 \\ -2a_3 & -1
					\end{smallmatrix}\big)\right\}$& \text{\textbf{Case 1:}} $a_2\neq0$\\
					\hspace{0.5cm} &$e_2^2=a_3e_1+e_2$ & &  $\big(\begin{smallmatrix}
						1 & a_2 \\ a_3 & 1
					\end{smallmatrix}\big)+\big(\begin{smallmatrix}
						0 & 0 \\ \alpha & \beta
					\end{smallmatrix}\big)t,\; \alpha,\beta\in\mathbb{C}$ \\
					\hspace{0.5cm}& & &  \text{\textbf{Case 2:}} $a_2=a_3=0$ \\
					& & &   $\big(\begin{smallmatrix}
						1 & 0 \\ 0 & 1
					\end{smallmatrix}\big)+\big(\begin{smallmatrix}
						0 & \alpha \\ \beta & 0
					\end{smallmatrix}\big)t,\; \alpha,\beta\in\mathbb{C}$\\
					& && \text{\textbf{Case 3:}} $a_2=0,\;a_3\neq0$ \\
					& & &  $\big(\begin{smallmatrix}
						1 & 0 \\ a_3 & 1
					\end{smallmatrix}\big)+\big(\begin{smallmatrix}
						0 & \alpha \\ 0 & \beta
					\end{smallmatrix}\big)t,\; \alpha,\beta\in\mathbb{C}$\\\hline
					$\E_6(a_4)$ & $e_1^2=e_2$ & $\spa\left\{\big(\begin{smallmatrix}
						0 & 1 \\ -2 & -a_4
					\end{smallmatrix}\big),\big(\begin{smallmatrix}
						0 & -2 \\ 1 & 0
					\end{smallmatrix}\big)\right\}$& $\big(\begin{smallmatrix}
						0 & 1 \\ 1 & a_4
					\end{smallmatrix}\big)+\big(\begin{smallmatrix}
						\alpha & 0 \\ 0 & \beta
					\end{smallmatrix}\big)t,\; \alpha,\beta\in\mathbb{C}$\\
					&$e_2^2=e_1+a_4e_2$&&\\
					\hline
			\end{tabular}}
			\caption{Infinitesimal deformations and spaces $\mathcal{B}^2(\mathcal{E})$ for all 2-dimensional evolution algebras over $\mathbb{C}$.}
			\label{table:inf_def}
		\end{table}
	\end{theorem}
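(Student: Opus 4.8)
The plan is to turn the statement into a finite list of elementary linear-algebra computations by working through the identification $\mathcal{Z}^2(V)\cong M_2(\mathbb{C})$ from the Notation, one computation for each of the six isomorphism classes $\E_1,\dots,\E_6(a_4)$ of \cite{CLOR_14}. The engine is an explicit coordinate description of $\delta_\mu$ that I would establish first. Writing $\varphi=(\xi_{ij})$ for an arbitrary endomorphism, a direct expansion of $\delta_\mu\varphi$ on basis elements gives, for $i\neq j$, the value $\delta_\mu\varphi(e_i,e_j)=-\sum_k(\xi_{ij}\omega_{ik}+\xi_{ji}\omega_{jk})e_k$; hence $\delta_\mu\varphi\in\mathcal{Z}^2(V)$ exactly when $\varphi$ obeys the derivation-type conditions \eqref{cond_1_der}. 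For such $\varphi$ the matrix of $\delta_\mu\varphi\in\mathcal{Z}^2(V)\cong M_2(\mathbb{C})$ has $(i,k)$-entry $\sum_p\omega_{ip}\xi_{kp}-2\xi_{ii}\omega_{ik}$, precisely the difference $\sigma^1_{ik}-\rho^1_{ik}$ appearing in \eqref{cond_2}. Thus $\mathcal{B}^2(\E)$ is nothing but the linear span, over all $\varphi$ satisfying \eqref{cond_1_der}, of these coboundary matrices.

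With this dictionary fixed, I would process each algebra in three steps: (i) substitute its structure matrix $(\omega_{ij})$ into \eqref{cond_1_der} and solve the resulting homogeneous system to see which entries $\xi_{ij}$ are free; (ii) feed the admissible $\varphi$ into the coboundary formula and read off a spanning set of $\mathcal{B}^2(\E)$, which reproduces the third column of Table~\ref{table:inf_def}; (iii) choose a complement of $\mathcal{B}^2(\E)$ in $M_2(\mathbb{C})$ and, invoking Corollary~\ref{cor:inf_h2}, write the infinitesimal deformations up to equivalence as $\mu+\theta t$ with $\theta$ ranging over that complement, giving the fourth column. As a sanity check, for $\E_1$ one finds $\xi_{12}=0$ and coboundary matrix $\big(\begin{smallmatrix}-\xi_{11}&\xi_{21}\\0&0\end{smallmatrix}\big)$, so $\mathcal{B}^2(\E_1)=\spa\{\big(\begin{smallmatrix}1&0\\0&0\end{smallmatrix}\big),\big(\begin{smallmatrix}0&1\\0&0\end{smallmatrix}\big)\}$ and the lower row supplies the complement, yielding exactly the tabulated $\big(\begin{smallmatrix}1&0\\0&0\end{smallmatrix}\big)+\big(\begin{smallmatrix}0&0\\\alpha&\beta\end{smallmatrix}\big)t$.

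The cases $\E_1,\E_2,\E_3,\E_4,\E_6$ differ only in bookkeeping and are routine. The hard part will be the two-parameter family $\E_5(a_2,a_3)$: there the solution space of \eqref{cond_1_der} and the rank of the coboundary span jump as the parameters vanish, so the argument must branch into the three regimes $a_2\neq0$, $a_2=a_3=0$, and $a_2=0\neq a_3$ displayed in the table, each producing a different $\mathcal{B}^2(\E_5)$ and hence a different complement. I would treat these subcases separately, in each solving the parameter-dependent system, computing the coboundaries, and selecting representatives transverse to $\mathcal{B}^2(\E_5)$; some care is needed to keep the complements consistent with the normalisation $e_1^2=e_1+a_2e_2$, $e_2^2=a_3e_1+e_2$ so that the listed representatives are recovered rather than some equivalent variant.

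Finally, since $\mathbb{C}$ has characteristic zero the factors $2\xi_{ii}$ never degenerate, so no separate characteristic-two analysis is required here, and the bijection of Corollary~\ref{cor:inf_h2} guarantees that the families exhibited in the fourth column genuinely exhaust the infinitesimal deformations up to equivalence. Collecting the six (respectively eight, counting the subcases of $\E_5$) computations reproduces Table~\ref{table:inf_def} in full, which is the assertion of the theorem.
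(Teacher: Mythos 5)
Your proposal is correct and follows essentially the same route as the paper's own proof: the paper likewise evaluates $\delta_\mu\varphi$ on pairs of basis elements, imposes the vanishing on distinct basis vectors (i.e.\ conditions \eqref{cond_1_der}) to constrain $\varphi=(\xi_{ij})$, spans $\mathcal{B}^2(\E)$ by the resulting coboundary matrices, and then lists $\operatorname{InfDef}(\E)$ as $\mu+\theta t$ with $\theta$ running over coset representatives of $\mathcal{H}^2(\E)$ via Corollary~\ref{cor:inf_h2}, treating $\E_1$ in the main text and the remaining algebras, including the three-case split for $\E_5$, in Appendix~\ref{appen:1}. One harmless inaccuracy in your motivation: for $\E_5(a_2,a_3)$ (with $1-a_2a_3\neq0$) the solution space of \eqref{cond_1_der} is $\xi_{12}=\xi_{21}=0$ and $\dim\mathcal{B}^2(\E_5)=2$ for \emph{all} parameter values, so neither actually jumps; what forces the three regimes is only that the position of $\mathcal{B}^2(\E_5)$ inside $M_2(\mathbb{C})$ moves, so a fixed complement (such as the lower-row matrices) stops being transverse when $a_2=0$.
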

	\begin{proof}
		We prove the result in detail for the first evolution algebra in Table~\ref{table:inf_def}. 
		The remaining cases are analogous and can be found in Appendix~\ref{appen:1}.
		
		Let $\E_1$ be the evolution algebra with natural basis $B=\{e_1,e_2\}$ and product given by $e_1^2=e_1$ and $e_2^2=0$. If $\theta\in\mathcal{B}^2(\E_1)$, then there must exist a linear morphism $(\varphi)_{BB}=(\xi_{ij})$ such that the following conditions hold:
		\begin{align*}
			\theta(e_1,e_2)&=\varphi(e_1e_2)-e_1\varphi(e_2)-\varphi(e_1)e_2=-e_1(\xi_{12}e_1+\xi_{22}e_2)-e_2(\xi_{11}e_1+\xi_{21}e_2)\\
			&=\xi_{12}e_1^2+\xi_{21}e_2^2=\xi_{12}e_1=0\implies \xi_{12}=0;\\
			\theta(e_1,e_1)&=\varphi(e_1^2)-2e_1\varphi(e_1)=\xi_{11}e_1+\xi_{21}e_2-2\xi_{11}e_1=-\xi_{11}e_1+\xi_{21}e_2;\\
			\theta(e_2,e_2)&=\varphi(e_2^2)-2e_2\varphi(e_2)=0.
		\end{align*} 
		Hence, we have that 
		\[\mathcal{B}^2(\E_1)=\left\{\big(\begin{smallmatrix}
			-\xi_{11} & \xi_{21} \\ 0 & 0
		\end{smallmatrix}\big)\mid \xi_{11},\xi_{21}\in\mathbb{C}\right\}=\spa\left\{\big(\begin{smallmatrix}
			1 & 0 \\ 0 & 0
		\end{smallmatrix}\big),\big(\begin{smallmatrix}
			0 & 1 \\ 0 & 0
		\end{smallmatrix}\big)\right\},\]
		and
		\[\mathcal{H}^2(\E_1)=\spa\left\{\big(\begin{smallmatrix}
			0 & 0 \\ 1 & 0
		\end{smallmatrix}\big)+\mathcal{B}^2(\E_1),\big(\begin{smallmatrix}
			0 & 0 \\ 0 & 1
		\end{smallmatrix}\big)+\mathcal{B}^2(\E_1)\right\}.\]
		Consequently, the set of all infinitesimal formal evolution deformations of $\E_1$ (up to equivalence) is given by 
		\begin{align*}
			\operatorname{InfDef}(\E_1)&=\left\{\big(\begin{smallmatrix}
				1 & 0 \\ 0 & 0
			\end{smallmatrix}\big)+\left(\alpha\big(\begin{smallmatrix}
				0 & 0 \\ 1 & 0
			\end{smallmatrix}\big)+\beta\big(\begin{smallmatrix}
				0 & 0 \\ 0 & 1
			\end{smallmatrix}\big)\right)t\mid \alpha,\beta\in\mathbb{C}\right\}
			=\left\{\big(\begin{smallmatrix}
				1 & 0 \\ 0 & 0
			\end{smallmatrix}\big)+\big(\begin{smallmatrix}
				0 & 0 \\ \alpha & \beta
			\end{smallmatrix}\big)t\mid \alpha,\beta\in\mathbb{C}\right\}.
		\end{align*}
	\end{proof}
	\section{Formal degenerations of evolution algebras}\label{sec:5}
	Since the defining feature of evolution algebras is the existence of a natural basis,  the families of invertible linear maps $\{g_t\}_{t \neq 0}$ considered in the study of formal degenerations~\eqref{eq:action} in the context of evolution algebras will be required to map natural bases to natural bases.  This observation motivates the following definition.
	\begin{definition}
		Let $\mathcal{E} = (V, \mu)$ be an evolution algebra with natural basis $B = \{e_1,\dots,e_n\}$. A nonsingular matrix $g$ defines a \textit{natural basis change} if it represents the change of basis from $B$ to another natural basis $B' = \{f_1, \dots, f_n\}$. In this case, the product $\mu'$ of $\mathcal{E}$ with respect to $B'$ is given by
		\[\mu'(f_i,f_j)=(g\cdot\mu)(f_i,f_j)=g\big(\mu(g^{-1}f_i,g^{-1}f_j)\big),\]
		for all $i,j=1,\dots,n$, so in particular $\mu'(f_i,f_j)=0$ for all $i\neq j$.
	\end{definition}
	\begin{remark}
		The relation between two structure matrices of the same evolution algebra relative to different natural bases, together with the corresponding change of basis matrices, is described in \cite[Section 3.2.2]{Tian_08} and in more detail in \cite[Section~1.3]{thesis_yolanda}. 
		Let $\E$ be an evolution algebra with natural basis $B=\{e_1,\dots,e_n\}$. If $B'=\{f_1,\dots,f_n\}$ is another natural basis, let $g=P_{BB'}=(p_{ij})$ and $g^{-1}=P_{B'B}=(q_{ij})$ be the change of basis matrices, then
		\[M_{B'}(\E)^t=g\,M_B(\E)^t\,(g^{-1})^{(2)},\]
		where $(g^{-1})^{(2)}=(q_{ij}^2)$.
	\end{remark}
	We now introduce the notion of formal degenerations in the setting of evolution algebras. The following definition is inspired by the concept of a \textit{contraction} (see, for instance, references \cite{B_07_cont,FM_06_cont}), a special case of degeneration in the classical context of varieties.
	\begin{definition}
		Let $\mu$ and $\lambda$ be two evolution algebras. We say that $\lambda$ is a \textit{formal evolution degeneration} of $\mu$, or that $\mu$ \textit{formally degenerates} to $\lambda$, if there exists a continuous map $g\colon(0,1]\longrightarrow\operatorname{GL}(n,\mathbb{K})$, $t\mapsto g_t$, with each linear isomorphism $g_t$ defining a natural basis change of $\mu$, such that 
		\[\lambda=\lim_{t \to 0}g_t\cdot\mu.\]
		We denote this by $\mu\rightarrow\lambda$.
	\end{definition}
	\begin{remark}~
		\label{rem:(2)}
		\begin{enumerate}
			\item Since each $g_t$ is an isomorphism for $t\in(0,1]$, all the algebras $g_t\cdot\mu$ are isomorphic to $\mu$. Hence, to obtain a new evolution algebra via formal evolution degeneration one needs $\det{(g_0)}=0$. This is a necessary condition but not a sufficient one.
			\item Throughout this paper, we only consider matrices $g_t$ with entries of the form $\alpha t^m$, with $\alpha\in\mathbb{K}$, $m\in\mathbb{Z}$. Then, the structure constants of $g_t\cdot\mu$ will be elements of $\mathbb{K}[t,t^{-1}]$. However, if a degeneration exists, the structure constants necessarily lie on $\mathbb{K}[t]$ to ensure that the limit exists when $t\to0$.
			\item In what follows, we slightly abuse notation and denote by $B'=\{f_1, \dots, f_n\}$ both the natural basis of $g_t\cdot\mu$ induced by $g_t$ and that of the limiting algebra $\lambda$.
			\item As in the case of deformations, we will use the term degeneration to mean a formal evolution degeneration.
		\end{enumerate}
	\end{remark}
	\begin{example}\label{ex:abelian}
		Any evolution algebra $\mu$ degenerates to the abelian evolution algebra of the same dimension. Simply consider the matrix $g_t=t^{-1}I_n$. Then, we have
		\[g_t\cdot\mu(f_i,f_i)=g_t\big(\mu(g_t^{-1}f_i,g_t^{-1}f_i)\big)=t^{-1}\big(\mu(te_i,te_i)\big)=t\mu(e_i,e_i)\xrightarrow{\text{$t\to0$}}0.\]
	\end{example}
	
	The following  proposition gives a connection between degeneration and deformation.
	\begin{proposition}\label{prop:deg_imp_def}
		If $\lambda$ is a degeneration of $\mu$ (in the setting of the second item of Remark~\ref{rem:(2)}), then $\mu$ is a deformation of $\lambda$.
	\end{proposition}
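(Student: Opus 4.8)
The plan is to read off a deformation of $\lambda$ directly from the degenerating family, with essentially no new computation. I set $\nu_t\coloneqq g_t\cdot\mu$ and work throughout with respect to the natural basis $B'=\{f_1,\dots,f_n\}$ that $g_t$ induces, which by item (3) of Remark~\ref{rem:(2)} is also the natural basis of the limit algebra $\lambda$. For each fixed $t\in(0,1]$, the map $g_t$ is a natural basis change of $\mu$, so $\nu_t(f_i,f_j)=0$ whenever $i\neq j$, and $\nu_t$ is isomorphic to $\mu$. Writing $\nu_t(f_i,f_i)=\sum_{j=1}^n\omega_{ij}(t)\,f_j$, the hypothesis that we are in the setting of item (2) of Remark~\ref{rem:(2)}, together with the existence of the limit in \eqref{eq:action} defining the degeneration, guarantees that each structure constant $\omega_{ij}(t)$ lies in $\mathbb{K}[t]$.

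Next I would expand these polynomials around $t=0$. Since each $\omega_{ij}(t)\in\mathbb{K}[t]$, evaluation at $t=0$ is well defined and agrees with the limit, so the scalars $\omega_{ij}(0)$ are precisely the structure constants of $\lambda=\lim_{t\to0}\nu_t$. Writing $\omega_{ij}(t)=\omega_{ij}(0)+\sum_{k\geq1}c_{ij}^{\,k}t^k$ with $c_{ij}^{\,k}\in\mathbb{K}$ (only finitely many nonzero, since the $\omega_{ij}(t)$ are polynomials), I define $\mathbb{K}$-bilinear maps $\nu_k$ by $\nu_k(f_i,f_i)\coloneqq\sum_{j=1}^n c_{ij}^{\,k}f_j$ and $\nu_k(f_i,f_j)\coloneqq0$ for $i\neq j$. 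Collecting terms then gives the formal expansion
\[
\nu_t=\lambda+\sum_{k\geq1}\nu_k\,t^k,
\]
whose zeroth-order term is $\lambda$ and which, being isomorphic to $\mu$ for every $t\neq0$, realises $\mu$ as a deformation of $\lambda$.

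It then remains to check that this expansion is a formal evolution deformation of $\lambda$ in the sense of Definition~\ref{def:def}, that is, that each $\nu_k\in\mathcal{Z}^2(V)$. The diagonal values $\nu_k(f_i,f_i)$ are bilinear by construction, so the only genuine point is the condition of naturalness~\eqref{eq:def_nat}, namely $\nu_k(f_i,f_j)=0$ for $i\neq j$ and all $k$. I expect this to be the step deserving the most care, and I would argue it intrinsically rather than by appealing to the construction: since $\nu_t(f_i,f_j)=0$ holds for every $t\in(0,1]$ and the structure constants are polynomials in $t$, this is a polynomial identity vanishing at infinitely many values of $t$, hence vanishing identically; therefore every coefficient $\nu_k(f_i,f_j)$ vanishes for $i\neq j$. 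With~\eqref{eq:def_nat} in hand, $\nu_t$ is a deformation of $\lambda$, which is exactly the assertion of the proposition. Everything else amounts to reorganising a polynomial family of structure matrices as a power series in $t$.
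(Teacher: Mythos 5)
Your proposal is correct and follows essentially the same route as the paper: the paper's (much terser) proof likewise observes that the structure constants of $g_t\cdot\mu$ lie in $\mathbb{K}[t]$, that the constant terms give $\lambda$, and that $g_t\cdot\mu\cong\mu$ for $t\in(0,1]$, so the family realises $\mu$ as a deformation of $\lambda$. Your expansion into coefficients $\nu_k$ and the verification of the naturalness condition \eqref{eq:def_nat} simply make explicit the details the paper leaves implicit.
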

	\begin{proof}
		By hypothesis, there exists a matrix $g_t$ which is nonsingular for all $t\in(0,1]$ and whose entries are elements of the form $\alpha t^m$ such that $\lambda=\lim_{t\to0}g_t\cdot\mu$. Then, the structure constants of $g_t\cdot\mu$ are elements of $\mathbb{K}[t]$. Since $g_t\cdot\mu\cong\mu$ for all $t\in(0,1]$, it follows that $\mu$ is a deformation of $\lambda$. 
	\end{proof}
	\subsection{Main properties and weaknesses}
	Motivated by the necessary conditions commonly used to establish the existence of a degeneration in the classical setting of Lie algebras (see \cite[Proposition~1.8]{B_05_7dim_nilp}), we state the following result.
	\begin{proposition}\label{prop:dim}
		Let $\mu$ and $\lambda$ be two $n$-dimensional evolution algebras such that $\lambda$ is a degeneration of $\mu$. Then, the following assertions hold:
		\begin{enumerate}[\rm(i)]
			\item $\dim{\ann(\mu)}\leq\dim{\ann(\lambda)}$;
			\item if there exists an integer $k>0$ such that $\dim{\ann^i(\mu)}=\dim{\ann^i(\lambda)}$ for all $i\leq k$, then $\dim{\ann^{k+1}(\mu)}\leq\dim{\ann^{k+1}(\lambda)}$, that is, the type of $\mu$ is less (with the lexicographic order) than the type of $\lambda$;
			\item $\dim{\lambda^2}\leq\dim{\mu^2}$.
			\item $\dim{\mathcal{B}^2(\mu)}\geq\dim{\mathcal{B}^2(\lambda)}$.
			\item $\dim{\mathcal{H}^2(\mu)}\leq\dim{\mathcal{H}^2(\lambda)}$.
		\end{enumerate}	
	\end{proposition}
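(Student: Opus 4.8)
The plan is to treat all five assertions uniformly as semicontinuity statements along the family $\mu_t:=g_t\cdot\mu$. First I would fix, as in Remark~\ref{rem:(2)}, the common natural basis $\{f_1,\dots,f_n\}$ of all $\mu_t$ ($t\in(0,1]$) and of $\lambda$, and write $M(t)=(\omega_{ij}(t))$ for the structure matrix of $\mu_t$, so that each $\omega_{ij}\in\mathbb{K}[t]$ and $M(0)$ is the structure matrix of $\lambda$. The two background facts I would lean on are: every $\mu_t$ with $t\in(0,1]$ is isomorphic to $\mu$, so any isomorphism invariant (e.g. $\dim\ann^i$, $\dim\mathcal E^2$, $\dim\der$, $\dim\mathcal B^2$) is constant on $(0,1]$ and equal to its value for $\mu$; and each such invariant is read off $M(t)$ through a rank condition or a vanishing condition, hence behaves semicontinuously as $t\to0$.

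For (i) and (ii) I would argue combinatorially on the index sets defining the upper annihilating series. Using the characterisation $\ann^i(\mathcal E)=\spa\{f_j:f_j^2\in\ann^{i-1}(\mathcal E)\}$, set $Z_1(M)=\{j:\text{row}_j\,M=0\}$ and recursively $Z_i(M)=\{j:\omega_{jk}=0\text{ for all }k\notin Z_{i-1}(M)\}$, so that $\dim\ann^i=|Z_i(M)|$. Let $Z_i^{\mathrm{gen}}$ be the analogous sets computed with ``identically zero polynomial'' in place of ``zero''. Since $\dim\ann^i(\mu_t)$ is constant on $(0,1]$ while, for all but finitely many $t$, $Z_i(M(t))=Z_i^{\mathrm{gen}}$, one gets $Z_i(M(t))=Z_i^{\mathrm{gen}}$ for every $t\in(0,1]$ and $|Z_i^{\mathrm{gen}}|=\dim\ann^i(\mu)$. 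An easy induction then shows $\spa\{f_j:j\in Z_i^{\mathrm{gen}}\}\subseteq\ann^i(\lambda)$ (an identically vanishing entry vanishes at $t=0$), whence $\dim\ann^i(\mu)\le\dim\ann^i(\lambda)$ for every $i$. This gives (i) (the case $i=1$), and since componentwise domination of the partial sums $\dim\ann^i$ forces the lexicographic inequality of the associated types, it yields the conditional statement (ii) as well. Part (iii) is the quickest: for an evolution algebra $\mathcal E^2=\spa\{f_i^2\}$ is exactly the row space of the structure matrix, so $\dim\mathcal E^2=\rank M$, and lower semicontinuity of the rank of a polynomial matrix gives $\dim\lambda^2=\rank M(0)\le\rank M(t)=\dim\mu^2$.

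Finally, (iv) and (v) are equivalent: since $\mathcal Z^2(V)$ depends only on $V$ and the basis, $\dim\mathcal H^2(\mathcal E)=n^2-\dim\mathcal B^2(\mathcal E)$, so it suffices to prove $\dim\mathcal B^2(\lambda)\le\dim\mathcal B^2(\mu)$. Writing $\Delta_t=\delta_{\mu_t}\colon\End_{\mathbb{K}}(V)\to\operatorname{Bil}_{\mathbb{K}}(V\times V,V)$ and letting $\bar\pi$ be the projection onto the off-diagonal part, the computation behind \eqref{cond_1_der} gives $\ker\Delta_t=\der(\mu_t)$ and, by a block computation over pairs $i<j$, $\rank(\bar\pi\Delta_t)=\sum_{i<j}\dim\spa\{f_i^2,f_j^2\}=:s(t)$; hence $\dim\mathcal B^2(\mu_t)=\rank\Delta_t-s(t)$ and equivalently $\dim\mathcal H^2(\mu_t)=\dim\der(\mu_t)+s(t)$.

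Here lies the main obstacle. In the last formula $\dim\der$ is upper semicontinuous (it helps, since $\dim\der(\lambda)\ge\dim\der(\mu)$) while $s$ is lower semicontinuous (it hurts, since $s(\lambda)\le s(\mu)$), so the two naive estimates point in opposite directions: $\dim\mathcal B^2$ is a \emph{difference} of ranks rather than a single rank, and the domain of the relevant map jumps at $t=0$. The inequality thus reduces to showing that the gain in derivations dominates the loss of pairwise independence among the squares, i.e. $\dim\der(\lambda)-\dim\der(\mu)\ge s(\mu)-s(\lambda)$. I would establish this by a Grassmannian limit argument: the subspaces $\mathcal B^2(\mu_t)\subseteq\mathcal Z^2(V)$ have constant dimension on $(0,1]$, so by properness of the Grassmannian they admit a limit $L=\lim_{t\to0}\mathcal B^2(\mu_t)$ with $\dim L=\dim\mathcal B^2(\mu)$, and the crux becomes the containment $\mathcal B^2(\lambda)\subseteq L$. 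To prove it I would lift a coboundary $\delta_{\mu_0}\varphi_0\in\mathcal B^2(\lambda)$ to a (possibly Laurent) curve $\delta_{\mu_t}\varphi(t)\in\mathcal B^2(\mu_t)$ converging to it, the delicate point being the continuation of $\varphi_0$ past the off-diagonal constraints \eqref{cond_1_der}, whose solution space jumps at $t=0$. Conceptually this is the familiar principle that cohomology can only jump up under degeneration; making it rigorous in the present non-variety setting is the heart of the proof.
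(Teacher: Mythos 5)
Parts (i)--(iii) of your proposal are correct, and they run parallel to the paper's proof in spirit (isomorphism invariance on $(0,1]$ plus passing to the limit at $t=0$), though your execution is more careful in two places. Your index-set argument for (i)--(ii) is sound: the containment $Z_i(M(t))\supseteq Z_i^{\mathrm{gen}}$ holds for \emph{every} $t$, including $t=0$, by induction on $i$, and constancy of $\dim\ann^i(g_t\cdot\mu)$ pins down the cardinalities; in fact this proves the unconditional inequality $\dim\ann^i(\mu)\le\dim\ann^i(\lambda)$ for all $i$, which is slightly stronger than the paper's conditional statement (the paper needs its hypothesis only to identify $\ann^k(\lambda)$ with a fixed span, whereas the containment suffices). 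For (iii), your rank-semicontinuity argument is cleaner than the paper's, which passes $t$-independent dependence relations among the $g_t\cdot\mu(f_i,f_i)$ to the limit -- an argument that, as written, only controls relations with constant coefficients, whereas generic kernels of a polynomial matrix family need not admit constant bases.

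The genuine gap is in (iv)--(v), and you name it yourself: your argument reduces everything to the containment $\mathcal{B}^2(\lambda)\subseteq L=\lim_{t\to0}\mathcal{B}^2(g_t\cdot\mu)$, equivalently to the inequality $\dim\der(\lambda)-\dim\der(\mu)\ge s(\mu)-s(\lambda)$, and then declares that making this rigorous ``is the heart of the proof'' without supplying it. Since, as you correctly observe, $\dim\mathcal{B}^2$ is a \emph{difference} of two lower semicontinuous ranks, no generic semicontinuity principle closes this; a proposal that ends at the crux is not a proof of (iv)--(v). For comparison, the paper's own proof is much shorter: given $\theta=\delta_\lambda\varphi\in\mathcal{B}^2(\lambda)$ it keeps the \emph{same} $\varphi$ along the family, sets $\theta_t:=\delta_{g_t\cdot\mu}\varphi$, asserts $\theta_t\in\mathcal{B}^2(g_t\cdot\mu)$, and obtains $\theta=\lim_{t\to0}\theta_t$, whence a basis of $\mathcal{B}^2(\lambda)$ lifts to independent elements of $\mathcal{B}^2(g_t\cdot\mu)$ for small $t$. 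Note that the asserted membership is precisely the point your analysis flags: for $i\neq j$ one has $\theta_t(f_i,f_j)=-\xi_{ij}\,g_t\cdot\mu(f_i,f_i)-\xi_{ji}\,g_t\cdot\mu(f_j,f_j)$, which only tends to $0$ as $t\to0$ but need not vanish for $t\neq0$, i.e.\ $\varphi$ satisfies the off-diagonal constraints \eqref{cond_1_der} for $\lambda$ but not necessarily for $g_t\cdot\mu$, so $\theta_t$ need not lie in $\mathcal{Z}^2(V)$. So your diagnosis of where the difficulty sits is accurate and is genuinely glossed over by the paper, but your proposal leaves (iv)--(v) unestablished; to complete it you would have to construct a $t$-dependent correction $\varphi(t)\in\ker$ of the constraints \eqref{cond_1_der} for $g_t\cdot\mu$ with $\varphi(t)\to\varphi$ and $\delta_{g_t\cdot\mu}\varphi(t)\to\theta$, which is exactly the step missing from both arguments.
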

	\begin{proof}
		By hypothesis, there exists a continuous map $g\colon(0,1]\longrightarrow\operatorname{GL}(n,\mathbb{K})$, $t\mapsto g_t$, with each linear isomorphism $g_t$ defining a natural basis change of $\mu$ such that $\lambda=\lim_{t\to0}g_t\cdot\mu$.
		\begin{enumerate}[\rm (i)]
			\item Assume that $\dim{\ann(\mu)}=r$. Then, since $g_t\cdot\mu\cong\mu$ for all $t\in(0,1]$, we have that $\dim{\ann(g_t\cdot\mu)}=r$. Say, without loss of generality, that $\ann(g_t\cdot\mu)=\spa\{f_1,\dots,f_r\}$. Since $g_t\cdot\mu(f_1,f_1)=\dots=g_t\cdot\mu(f_r,f_r)=0$, we also have that 
			\[\lambda(f_i,f_i)=\lim_{t\to0}g_t\cdot\mu(f_i,f_i)=0,\]
			for all $1\leq i\leq r$. Consequently, $\spa\{f_1,\dots,f_r\}\subset\ann(\lambda)$, and the inequality follows.
			\item Assume that $\dim{\ann^{k}(\mu)}=m$ and $\dim{\ann^{k+1}(\mu)}=r$, with $m\leq r$. Reasoning as before, we can say $\ann^{k}(g_t\cdot\mu)=\spa\{f_1,\dots,f_m\}$ and $\ann^{k+1}(g_t\cdot\mu)=\spa\{f_1,\dots,f_r\}$. Since $g_t\cdot\mu(f_i,f_i)\in\ann^{k}=\spa\{f_1,\dots,f_m\}$ for all $1\leq i\leq r$, we also have 
			\[\lambda(f_i,f_i)=\lim_{t\to0}g_t\cdot\mu(f_i,f_i)\in\spa\{f_1,\dots,f_m\}=\ann^k(\lambda),\]
			for all $1\leq i\leq r$, where the last equality follows from the hypothesis that $\dim{\ann^i(\mu)}=\dim{\ann^i(\lambda)}$ for all $i\leq k$. Consequently, $\spa\{f_1,\dots,f_r\}\subset\ann^{k+1}(\lambda)$, and the inequality follows.
			\item Since $g_t\cdot\mu\cong\mu$ for all $t\in(0,1]$, we have
			\begin{align*}
				\dim{\mu^2}&=\dim{\big(\spa\{\mu(e_i,e_i)\colon i=1,\dots,n\}\big)}\\&=\dim{\big(\spa\{g_t\cdot\mu(f_i,f_i)\colon i=1,\dots,n\}\big)}=\dim{(g_t\cdot\mu)^2}.
			\end{align*}
			Moreover, it is easy to check that if $\sum_{i=1}^n\alpha_i\big(g_t\cdot\mu(e_i,e_i)\big)=0$ for some $\alpha_i\in\mathbb{K}$, then $\sum_{i=1}^n\alpha_i\lambda(f_i,f_i)=0$. Indeed,
			\begin{align*}
				\sum_{i=1}^n\alpha_i\lambda(f_i,f_i)
				& =\sum_{i=1}^n\alpha_i\lim_{t\to0}g_t\cdot\mu(f_i,f_i) =\lim_{t\to0}\sum_{i=1}^n\alpha_i\big(g_t\cdot\mu(f_i,f_i)\big)=0.
			\end{align*}
			Since the linear dependence relations are preserved, then we get $\dim{\mu^2}=\dim{(g_t\cdot\mu)^2}\geq\dim{\lambda^2}$.
			\item We show that every element in $\mathcal{B}^2(\lambda)$ can be obtained as the limit when $t\to0$ of an element in $\mathcal{B}^2(g_t\cdot\mu)$. Let $\theta\in\mathcal{B}^2(\lambda)$. Then, for all $i,j=1,\dots,n$ we have that
			\begin{align*}
				\theta(f_i,f_j)&=\varphi(\lambda(f_i,f_j))-\lambda(f_i,\varphi(f_j))-\lambda(\varphi(f_i),f_j)\\
				&\overset{(\ast)}{=}\lim_{t\to0}\varphi(g_t\cdot\mu(f_i,f_j))-\lim_{t\to0}g_t\cdot\mu(f_i,\varphi(f_j))-\lim_{t\to0}g_t\cdot\mu(\varphi(f_i),f_j)\\
				&=\lim_{t\to0}\Big[\varphi(g_t\cdot\mu(f_i,f_j))-g_t\cdot\mu(f_i,\varphi(f_j))-g_t\cdot\mu(\varphi(f_i),f_j)\Big]=\lim_{t\to0}\theta_t,
			\end{align*}
			for some $\theta_t\in\mathcal{B}^2(g_t\cdot\mu)$. The equality $(\ast)$ follows from the fact that the limit and the map $\varphi$ commute. Indeed, when $i\neq j$ it is trivial; and when $i=j$, if we denote by $\mu_{ik}(t)$ the structure constants of $g_t\cdot\mu$ and by $\lambda_{ik}$ the structure constants of $\lambda$, we have
			\begin{align*}
				\varphi(\lambda(f_i,f_i))&=\varphi\left(\sum_{k=1}^n\lambda_{ik}f_k\right)=\sum_{k=1}^n\lambda_{ik}\varphi(f_k)=\sum_{k=1}^n\big(\lim_{t\to0}\mu_{ik}(t)\big)\varphi(f_k)\\
				&=\sum_{k=1}^n\lim_{t\to0}\big(\mu_{ik}(t)\varphi(f_k)\big)=\lim_{t\to0}\sum_{k=1}^n\mu_{ik}(t)\varphi(f_k)\\
				&=\lim_{t\to0}\varphi\left(\sum_{k=1}^n\mu_{ik}(t)f_k\right)=\lim_{t\to0}\varphi\big(g_t\cdot\mu(f_i,f_i)\big).
			\end{align*}
			Finally, since $g_t\cdot\mu\cong\mu$ for all $t\in(0,1]$, we conclude that $\dim{\mathcal{B}^2(\mu)}=\dim{\mathcal{B}^2(g_t\cdot\mu)}\geq\dim{\mathcal{B}^2(\lambda)}$.
			\item This follows straightforwardly from the fact that for any given evolution algebra $\E$ it holds that $\dim{\mathcal{H}^2(\E)}=n^2-\dim{\mathcal{B}^2(\E)}$.
		\end{enumerate}	
	\end{proof}
	
	Unlike in the general setting of algebra varieties, degenerations in evolution algebras are not necessarily transitive. As we show in the next two examples, each step in a degeneration chain must be examined with particular care. In the first, we provide full computations to explicitly illustrate how such degenerations are constructed. These details will be mostly omitted in the rest of the paper.
	
	\begin{example}\label{ex:1}
		Consider the following two-dimensional evolution algebras:
		\[
		\mu_1: e_1^2 = e_1,\; e_2^2 = e_2; \qquad
		\mu_2: e_1^2 = e_1,\; e_2^2 = 0; \qquad
		\mu_3: e_1^2 = e_2,\; e_2^2 = 0.
		\]
		First, it is not difficult to see that
		\[
		\lim_{t \to 0} g_t \cdot \mu_1 =\mu_2, \quad \text{with } g_t =\begin{pmatrix} 1 & 0 \\ 0 & t^{-1} \end{pmatrix}\quad\left(\text{and } g_t^{-1} =\begin{pmatrix} 1 & 0 \\ 0 & t \end{pmatrix}\right).
		\]
		Explicitly, we have
		\begin{align*}
			g_t\cdot\mu_1(f_1,f_1)&=g_t\big(\mu_1(g_t^{-1}f_1,g_t^{-1}f_1)\big)=g_t\big(\mu_1(e_1,e_1)\big)=g_t(e_1)=f_1\xrightarrow{\text{$t\to0$}}f_1,\\
			g_t\cdot\mu_1(f_2,f_2)&=g_t\big(\mu_1(g_t^{-1}f_2,g_t^{-1}f_2)\big)=t^2g_t\big(\mu_1(e_2,e_2)\big)=t^2g_t(e_2)=tf_2\xrightarrow{\text{$t\to0$}}0;
		\end{align*}
		which converges to the product of $\mu_2$. In the same way, we can also see that
		\[
		\lim_{t \to 0} h_t \cdot \mu_2 \neq \mu_3, \quad \text{with } h_t = \begin{pmatrix} t^{-1} & 0 \\ t^{-2} & t^{-1} \end{pmatrix}\quad\left(\text{and } h_t^{-1} =\begin{pmatrix} t & 0 \\ -1 & t \end{pmatrix}\right).
		\]
		Explicitly, we have
		\begin{align*}
			h_t\cdot\mu_2(f_1,f_1)&=h_t\big(\mu_2(h_t^{-1}f_1,h_t^{-1}f_1)\big)=h_t\big(\mu_2(te_1-e_2,te_1-e_2)\big)\\&=h_t\big(t^2\mu_2(e_1,e_1)+\mu_2(e_2,e_2)\big)=t^2h_t(e_1)=tf_1+f_2\xrightarrow{\text{$t\to0$}}f_2,\\
			h_t\cdot\mu_2(f_2,f_2)&=h_t\big(\mu_2(g_t^{-1}f_2,g_t^{-1}f_2)\big)=t^2h_t\big(\mu_2(e_2,e_2)\big)=0\xrightarrow{\text{$t\to0$}}0;
		\end{align*}
		which converges to the product of $\mu_3$.
		However, there is no contraction from $\mu_1$ to $\mu_3$. Since $\mu_1$ is regular, it has a unique natural basis (see \cite[Theorem 4.4]{EL_15}). Hence, the only possibilities are given by
		\begin{align*}
			g_t&=\begin{pmatrix}
				g_1(t) & 0 \\ 0 & g_2(t)
			\end{pmatrix}\left(\text{and } g_t^{-1} =\begin{pmatrix} \big(g_1(t)\big)^{-1} & 0 \\ 0 & \big(g_2(t)\big)^{-1} \end{pmatrix}\right)\quad\text{or}\\
			h_t&=\begin{pmatrix}
				0 & h_1(t) \\ h_2(t) & 0
			\end{pmatrix}\left(\text{and } h_t^{-1} =\begin{pmatrix} 0 & \big(h_2(t)\big)^{-1} \\ \big(h_1(t)\big)^{-1} & 0 \end{pmatrix}\right),
		\end{align*}
		for some continuous maps $g_1(t)$, $g_1(t)$, $h_1(t)$ and $h_2(t)$ which are different from zero for any $t\in(0,1]$, but it is easy to check that $\lim_{t \to 0} g_t \cdot \mu_1 \neq \mu_3$ and $\lim_{t \to 0} h_t \cdot \mu_1 \neq \mu_3$. We check the first case, the other one is analogue. In fact,
		\begin{align*}
			g_t\cdot\mu_1(f_1,f_1)&=\big(g_1(t)\big)^{-2}g_t\big(\mu_1(e_1,e_1)\big)=\big(g_1(t)\big)^{-2}g_t(e_1)=\big(g_1(t)\big)^{-1}f_1,\\
			g_t\cdot\mu_1(f_2,f_2)&=\big(g_2(t)\big)^{-2}g_t\big(\mu_1(e_2,e_2)\big)=\big(g_2(t)\big)^{-2}g_t(e_2)=\big(g_2(t)\big)^{-1}f_2;
		\end{align*}
		which does not converge to $\mu_3$ when $t\to0$ for any continuous maps $g_1(t)$ and $g_2(t)$.
	\end{example}
	
	\begin{remark}\label{rem:with_example}
		One might think that the previous example fails because the matrix
		\[
		h_tg_t =\begin{pmatrix} t^{-1} & 0 \\ t^{-2} & t^{-2} \end{pmatrix}\quad\left(\text{and } (h_tg_t)^{-1}=g_t^{-1}h_t^{-1} =\begin{pmatrix} t & 0 \\ -t & t^2 \end{pmatrix}\right)
		\]
		does not define a natural basis change of $\mu_1$. However, in general, even when we have a chain of degenerations $\mu_1\rightarrow\mu_2\rightarrow\mu_3$ given by $g_t$ and $h_t$, respectively, such that $h_tg_t$ is a natural basis change of the first evolution algebra, we may still have
		\[\lim_{t\to0}(h_tg_t)\cdot\mu_1\neq\lim_{t\to0}h_t\cdot\Big(\lim_{t \to 0}g_t\cdot\mu_1\Big).\]
		For instance, consider the following  three-dimensional evolution algebras $\mu_{3,4}$, $\mu_{3,2}$ and $\mu_{3,1}$ in Table \ref{table:clas_nilp}. We can easily check that $\mu_{3,2}=\lim_{t\to0}g_t\cdot\mu_{3,4}$ with $g_t=\operatorname{diag}(1,t,t^2)$ and that $\mu_{3,1}=\lim_{t\to0}h_t\cdot\mu_{3,2}$ with $h_t=\operatorname{diag}(1,t,1)$. However, although $h_tg_t=\operatorname{diag}(1,t^2,t^2)$ is a natural basis change of $\mu_1$ for all $t\neq0$, we have that $\lim_{t\to0}(h_tg_t)\cdot\mu_1\neq\mu_3$. Specifically,
		\[(h_tg_t)\cdot\mu_1(f_2,f_2)=t^{-4}h_t(g_t(e_3))=t^{-2}e_2\xrightarrow{\text{$t\to0$}}\infty.\]
	\end{remark}

	As a consequence of the lack of transitivity discussed above, we introduce the following definition considering the transitive closure of the relation ``being a degeneration ($\rightarrow$)''.
	
	\begin{definition}
		Let $\mathcal{F}$ be a family of evolution algebras, and let $\mu,\lambda\in\mathcal{F}$. We say that $\lambda$ is a \textit{transitive degeneration} of $\mu$, or that $\mu$ \textit{transitively degenerates} to $\lambda$, among $\mathcal{F}$ if there exists a finite sequence of degenerations
		\[\mu=\mu_0\rightarrow\mu_1\rightarrow\dots\rightarrow\mu_k=\lambda,\] with each $\mu_i\in\mathcal{F}$. We denote this by $\mu\rightsquigarrow\lambda$.
	\end{definition}
	
	\subsection{Degenerations of nilpotent evolution algebras up to dimension four}
	The study of degeneration relations among nilpotent Lie algebras has been extensively developed (see, for instance, \cite{B_05_7dim_nilp,GO_88_deg_nilp_Lie,S_90_deg_nilp_Lie}). 
	Motivated by this, and taking into account that complex nilpotent evolution algebras up to dimension four have been completely classified (see Table~\ref{table:clas_nilp}), 
	we devote this final part to the study of transitive degenerations within these families of evolution algebras, explicitly constructing the corresponding maps $g_t$ and the associated Hasse diagrams, which collectively describe all such relations.
	
	\begin{notation}
		For convenience in what follows, we shall denote the evolution algebras listed in Table~\ref{table:clas_nilp} by 
		$\mu_{i,j}$ instead of $\E_{i,j}$. That is, each symbol $\mu_{i,j}$ will refer to the corresponding algebra $\E_{i,j}$ in the classification. Moreover, we denote by $E_{ij}$ the matrix having a $1$ in the $(i,j)$-entry and zeros elsewhere.
	\end{notation}

	\subsubsection{Hasse diagrams of nilpotent evolution algebras of dimensions two and three}
	
	The unique isomorphism classes in $\mathcal{N}_2(\mathbb{C})$ are $\mu_{2,1}:e_1^2=e_2^2=0$ and $\mu_{2,2}:e_1^2=e_2$, $e_2^2=0$. Then, since every evolution algebra degenerates to the abelian one of the same dimension (Example \ref{ex:abelian}), the corresponding Hasse diagram is $\mu_{2,2}\rightarrow\mu_{2,1}$.
	
	\begin{theorem}\label{th:deg_3}
		All transitive degenerations among $\mathcal{N}_3(\mathbb{C})$ are captured by the Hasse diagram $\mu_{3,4}\rightarrow\mu_{3,3}\rightarrow\mu_{3,2}\rightarrow\mu_{3,1}$. Moreover, $\mu_{3,2}$  is also a degeneration of $\mu_{3,4}$.
	\end{theorem}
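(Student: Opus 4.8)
The plan is to split the statement into a \emph{constructive} half and an \emph{obstructive} half. In the constructive half I would exhibit explicit families $g_t$ realizing the three arrows of the diagram, together with the direct degeneration $\mu_{3,4}\rightarrow\mu_{3,2}$ asked for in the ``moreover''. In the obstructive half I would use the monotone numerical invariants of Proposition~\ref{prop:dim} to show that no other single-step degeneration can occur among the four algebras. Since $\rightsquigarrow$ is by definition the transitive closure of $\rightarrow$, once the three consecutive arrows are produced the longer relations $\mu_{3,4}\rightsquigarrow\mu_{3,2}\rightsquigarrow\mu_{3,1}$ and $\mu_{3,3}\rightsquigarrow\mu_{3,1}$ follow automatically, so the genuine work is concentrated in a handful of explicit limits and a finite case check.

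For the constructive half I would treat the maps one at a time. The degeneration $\mu_{3,2}\rightarrow\mu_{3,1}$ is immediate from Example~\ref{ex:abelian} (take $g_t=t^{-1}I_3$), since $\mu_{3,1}$ is abelian. For $\mu_{3,3}\rightarrow\mu_{3,2}$ I would use a rescaled transposition such as $g_t=\left(\begin{smallmatrix}1&0&0\\0&0&1\\0&t^{-1}&0\end{smallmatrix}\right)$, which carries the single surviving relation onto the target generator while the second square is scaled to $0$ in the limit; the direct map $\mu_{3,4}\rightarrow\mu_{3,2}$ is even simpler, realized by the diagonal $g_t=\operatorname{diag}(1,1,t)$, which keeps $e_1^2=e_2$ but damps $e_2^2=e_3$ to $t\,e_3\to0$. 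The delicate point, and the main obstacle, is $\mu_{3,4}\rightarrow\mu_{3,3}$: here one must \emph{merge} the chain $e_1^2=e_2,\ e_2^2=e_3$ into the configuration $e_1^2=e_2^2=e_3$, which cannot be done by a diagonal change (a diagonal $g_t$ preserves the directions of the squares) and forces a non-diagonal family whose entries blow up. Concretely I would take $g_t=\left(\begin{smallmatrix}t^{-1}&0&0\\0&1&0\\0&t^{-2}&1\end{smallmatrix}\right)$; the $t^{-2}$ entry is engineered so that $\mu_t(e_1,e_1)=t^2e_2+e_3\to e_3$ while $\mu_t(e_2,e_2)=e_3$ exactly, the potentially divergent contributions cancelling in the limit. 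For each family I would verify two things: that $g_t^{-1}$ sends the natural basis to another natural basis of the source (so $g_t$ really is a natural basis change and condition~\eqref{eq:def_nat}-type naturalness is preserved along the family), and that the entrywise limits of the structure constants exist and equal the claimed target product.

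For the obstructive half I would record the invariants that are monotone under $\rightarrow$ by Proposition~\ref{prop:dim}: the pairs $(\dim\ann,\dim\E^2)$ take the values $(3,0),(2,1),(1,1),(1,2)$ on $\mu_{3,1},\mu_{3,2},\mu_{3,3},\mu_{3,4}$ respectively (the types $[3],[2,1],[1,2],[1,1,1]$ give the same ordering). Any degeneration $\mu\rightarrow\lambda$ requires simultaneously $\dim\ann\mu\leq\dim\ann\lambda$ and $\dim\lambda^2\leq\dim\mu^2$, and these inequalities compose, hence hold along any $\rightsquigarrow$-chain. Running through the six ordered pairs not already in the diagram, each violates at least one: every relation out of $\mu_{3,1}$ and every relation into $\mu_{3,4}$ from a lower algebra fails the inequality on $\dim\E^2$, while the only borderline pair $\mu_{3,2}\rightarrow\mu_{3,3}$ (which shares $\dim\E^2=1$) is excluded by $\dim\ann\mu_{3,2}=2>1=\dim\ann\mu_{3,3}$. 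The two invariants together therefore totally order the four algebras exactly as the chain, so the transitive closure of $\rightarrow$ coincides with the displayed diagram; combined with the explicit family for $\mu_{3,4}\rightarrow\mu_{3,2}$, this yields both assertions of the theorem.
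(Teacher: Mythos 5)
Your proposal is correct and follows essentially the same strategy as the paper: exhibit explicit natural-basis-change families $g_t$ realizing $\mu_{3,4}\rightarrow\mu_{3,3}$, $\mu_{3,3}\rightarrow\mu_{3,2}$ and $\mu_{3,4}\rightarrow\mu_{3,2}$ (with Example~\ref{ex:abelian} handling degenerations to the abelian algebra), then rule out all remaining ordered pairs, including transitive chains, via the monotone invariants of Proposition~\ref{prop:dim}. Your concrete matrices differ from the paper's choices (e.g.\ the paper uses $\operatorname{diag}(1,t,t^2)+E_{32}$ for $\mu_{3,4}\rightarrow\mu_{3,3}$), but each one checks out, and your obstruction bookkeeping with the pair $(\dim\ann,\dim\E^2)$ is equivalent to the paper's use of the type together with $\dim\E^2$, since $\dim\ann$ is the first entry of the type.
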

	\begin{proof}
		From the type and the dimension of the derived subalgebra of each evolution algebra in $\mathcal{N}_3(\mathbb{C})$ (see Table~\ref{table:clas_nilp}) and Proposition~\ref{prop:dim}, it follows that the previous Hasse diagram realises the maximal number of transitive degenerations. Therefore, since every evolution algebra degenerates to the abelian one, it remains only to prove that $\mu_{3,4}\rightarrow\mu_{3,3}$, $\mu_{3,3}\rightarrow\mu_{3,2}$ and $\mu_{3,4 }\rightarrow\mu_{3,2}$. Although we omit the detailed computations, we explicitly list the suitable transformations in Table~\ref{tab:cont_3}.
		\begin{table}[H]
        \centering
			\setlength{\tabcolsep}{10pt}
			\renewcommand{\arraystretch}{1.3}
				\begin{tabular}{|| l | l  ||}
					\hline
					\textbf{Degeneration} &  $g_t$   \\\hline\hline
					$\mu_{3,4}\rightarrow\mu_{3,3}$ & $\operatorname{diag}(1,t,t^2)+E_{32}  $ \\
					$\mu_{3,3}\rightarrow\mu_{3,2}$ 	&  $\operatorname{diag}(t,1,t^2)$  \\
					$\mu_{3,4}\rightarrow\mu_{3,2}$  & $\operatorname{diag}(1,t,t^2)$ \\\hline
				\end{tabular}
			\caption{All degenerations in $\mathcal{N}_3(\mathbb{C})$ and the corresponding transformations.}
			\label{tab:cont_3}
		\end{table}
	\end{proof}
	Finally, we state the following straightforward results.
	\begin{corollary}
		The evolution algebra $\mu_{3,4}$ is the unique rigid evolution algebra in $\mathcal{N}_3(\mathbb{C})$.
	\end{corollary}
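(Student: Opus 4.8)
The plan is to read rigidity directly off the Hasse diagram established in Theorem~\ref{th:deg_3}, recalling that within a fixed family an evolution algebra is \emph{rigid} precisely when it is not a proper degeneration of any other member of the family --- equivalently, when it is a maximal (source) vertex for the transitive degeneration order $\rightsquigarrow$. Under this convention the corollary reduces to showing that $\mu_{3,4}$ is the unique source in the chain $\mu_{3,4}\rightarrow\mu_{3,3}\rightarrow\mu_{3,2}\rightarrow\mu_{3,1}$.

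First I would dispose of the three non-rigid cases. By Theorem~\ref{th:deg_3} we have $\mu_{3,4}\rightsquigarrow\mu_{3,3}$, together with $\mu_{3,4}\rightsquigarrow\mu_{3,2}$ and $\mu_{3,4}\rightsquigarrow\mu_{3,1}$ (the latter two through the displayed chain). Since $\mu_{3,4}\not\cong\mu_{3,j}$ for $j=1,2,3$, each of $\mu_{3,3}$, $\mu_{3,2}$ and $\mu_{3,1}$ is a proper transitive degeneration of a non-isomorphic algebra, so none of them can be rigid.

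It then remains to prove that $\mu_{3,4}$ itself is rigid, that is, that no non-isomorphic $\nu\in\mathcal{N}_3(\mathbb{C})$ satisfies $\nu\rightsquigarrow\mu_{3,4}$. Here I would invoke the numerical obstruction of Proposition~\ref{prop:dim}(iii): a single degeneration $\mu\rightarrow\lambda$ forces $\dim\lambda^2\leq\dim\mu^2$, and applying this at each arrow of a transitive degeneration $\nu=\nu_0\rightarrow\cdots\rightarrow\nu_k=\mu_{3,4}$ yields $\dim\mu_{3,4}^2\leq\dim\nu^2$. From Table~\ref{table:clas_nilp}, the value $\dim\mu_{3,4}^2=2$ is the maximum of $\dim(\cdot)^2$ over $\mathcal{N}_3(\mathbb{C})$ and is attained by $\mu_{3,4}$ alone; hence $\dim\nu^2=2$ forces $\nu\cong\mu_{3,4}$. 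Combining the two parts gives the desired uniqueness.

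The argument is essentially bookkeeping, so the only point requiring care is the monotonicity-along-chains step: Proposition~\ref{prop:dim}(iii) is stated for a single degeneration, so it must be applied arrow by arrow rather than to $\rightsquigarrow$ directly. If one prefers, the type invariant from Proposition~\ref{prop:dim}(ii), which likewise singles out the type $[1,1,1]$ of $\mu_{3,4}$ as maximal in the lexicographic order, provides an equally short alternative route to the same conclusion.
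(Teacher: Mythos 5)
Your proposal is correct and follows essentially the paper's own route: the paper states this corollary as an immediate consequence of Theorem~\ref{th:deg_3}, whose proof already combines Proposition~\ref{prop:dim} with the type and $\dim(\cdot)^2$ data of Table~\ref{table:clas_nilp} to rule out any degeneration into $\mu_{3,4}$, so reading rigidity off the Hasse diagram, with the invariant of Proposition~\ref{prop:dim}(iii) applied arrow by arrow along $\rightsquigarrow$, is exactly the intended argument. One small slip in your closing parenthetical: the type $[1,1,1]$ of $\mu_{3,4}$ is lexicographically \emph{minimal} in $\mathcal{N}_3(\mathbb{C})$, not maximal --- by Proposition~\ref{prop:dim}(ii) the type of the degenerating algebra is \emph{less} than that of its degeneration, so it is precisely this minimality that blocks incoming degenerations.
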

	\begin{corollary}
		Let $\mu,\lambda\in\mathcal{N}_3(\mathbb{C})$. Then, $\mu\rightsquigarrow\lambda$ if and only if $\mu\rightarrow\lambda$.
	\end{corollary}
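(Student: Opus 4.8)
The plan is to prove the two implications separately, the forward direction being immediate and the reverse one reducing to a finite verification enabled by Theorem~\ref{th:deg_3}.

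First, I would dispatch the implication $\mu\rightarrow\lambda \Rightarrow \mu\rightsquigarrow\lambda$ directly from the definitions: a single degeneration $\mu\rightarrow\lambda$ is already a degeneration chain of length one, and hence witnesses $\mu\rightsquigarrow\lambda$. No computation is needed here.

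For the converse, I would invoke the fact that Theorem~\ref{th:deg_3} completely describes the reachability relation on $\mathcal{N}_3(\mathbb{C})$: the transitive degenerations form the total order $\mu_{3,4}\rightsquigarrow\mu_{3,3}\rightsquigarrow\mu_{3,2}\rightsquigarrow\mu_{3,1}$. Consequently, there are only six ordered pairs $(\mu,\lambda)$ with $\mu\neq\lambda$ and $\mu\rightsquigarrow\lambda$, and it suffices to exhibit a direct degeneration for each. The three covering relations $\mu_{3,4}\rightarrow\mu_{3,3}$, $\mu_{3,3}\rightarrow\mu_{3,2}$ and $\mu_{3,2}\rightarrow\mu_{3,1}$ are direct degenerations by the transformations recorded in Table~\ref{tab:cont_3} (the last also being an instance of Example~\ref{ex:abelian}). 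The three remaining ``shortcut'' relations appearing in the transitive closure are likewise realised directly: $\mu_{3,4}\rightarrow\mu_{3,2}$ is the direct degeneration asserted in the ``Moreover'' part of Theorem~\ref{th:deg_3}, while $\mu_{3,4}\rightarrow\mu_{3,1}$ and $\mu_{3,3}\rightarrow\mu_{3,1}$ are direct degenerations to the abelian algebra by Example~\ref{ex:abelian}.

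Since every pair in the transitive closure already admits a direct degeneration, I would conclude $\mu\rightsquigarrow\lambda \Rightarrow \mu\rightarrow\lambda$, which together with the forward direction yields the equivalence. The only point that requires care—and the nearest thing to an obstacle in an otherwise routine check—is ensuring that the list of six transitive pairs is exhaustive, so that no new relation is introduced by transitivity beyond those already available as direct degenerations; this exhaustiveness is exactly what the maximality clause of Theorem~\ref{th:deg_3} guarantees.
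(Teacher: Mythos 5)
Your proof is correct and takes essentially the same route as the paper, which records this corollary without proof as a straightforward consequence of Theorem~\ref{th:deg_3}: the maximality clause of that theorem confines the transitive closure to the six ordered pairs you list, and each of these is realised by a direct degeneration via Table~\ref{tab:cont_3}, the ``Moreover'' clause, or Example~\ref{ex:abelian}. The only cosmetic slip is attributing $\mu_{3,2}\rightarrow\mu_{3,1}$ to Table~\ref{tab:cont_3}; it comes solely from Example~\ref{ex:abelian}, as your parenthetical already acknowledges.
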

	
	\subsubsection{Hasse diagrams of nilpotent evolution algebras of dimension four}
	We devote this final section to the study of (transitive) degenerations among evolution algebras of dimension four over $\mathbb{C}$. In particular, the next result presents all transitive degeneration relations that we have explicitly established within $\mathcal{N}_4(\mathbb{C})$. Although additional relations may exist beyond those shown, the following Hasse diagram offers a coherent and informative global picture of the known degeneration structure.
	\begin{proposition}\label{prop:deg_4}
		The Hasse diagram in Figure \ref{fig:deg_4} captures several transitive degeneration relations among $\mathcal{N}_4(\mathbb{C})$. Particularly, the diagram represents all transitive degenerations among the algebras $\{\mu_{4,i}\}_{i=1}^7$.
		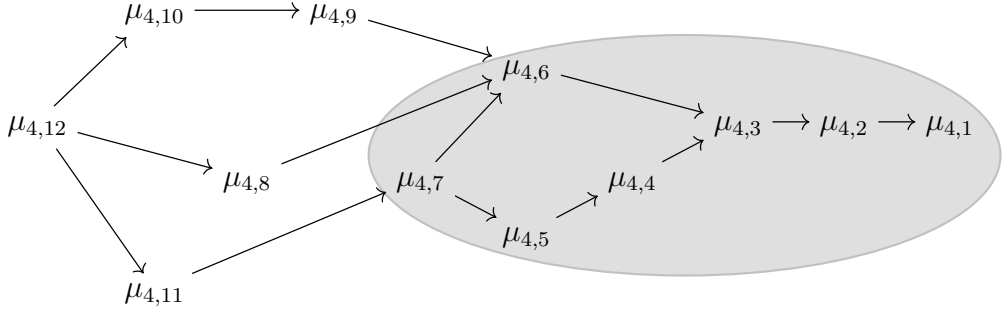
\begin{figure}[H]
			\centering
			\hspace{-13cm}\begin{tikzpicture}
				\draw[black, thick, fill=black!50, opacity=0.25]
				(9,0) ellipse (4.2cm and 1.6cm);  
				\begin{tikzcd}[cramped, sep=tiny]
					&  & {\mu_{4,10}} \arrow[rr]    &                            & {\mu_{4,9}} \arrow[rrrd] &                                      &  &                           &  &                         &  &                        &  &                        &  &             \\
					&  &                            &                            &                          &                                      &  & {\mu_{4,6}} \arrow[rrrrd] &  &                         &  &                        &  &                        &  &             \\
					{\mu_{4,12}} \arrow[rrddd] \arrow[rruu] \arrow[rrrd] &  &                            &                            &                          &                                      &  &                           &  &                         &  & {\mu_{4,3}} \arrow[rr] &  & {\mu_{4,2}} \arrow[rr] &  & {\mu_{4,1}} \\
					&  &                            & {\mu_{4,8}} \arrow[rrrruu] &                          & {\mu_{4,7}} \arrow[rruu] \arrow[rrd] &  &                           &  & {\mu_{4,4}} \arrow[rru] &  &                        &  &                        &  &             \\
					&  &                            &                            &                          &                                      &  & {\mu_{4,5}} \arrow[rru]   &  &                         &  &                        &  &                        &  &             \\
					&  & {\mu_{4,11}} \arrow[rrruu] &                            &                          &                                      &  &                           &  &                         &  &                        &  &                        &  &            
				\end{tikzcd}    
			\end{tikzpicture}
			\caption{Several transitive degeneration relations among $\mathcal{N}_4(\mathbb{C})$}
			\label{fig:deg_4}
		\end{figure}
		
	\end{proposition}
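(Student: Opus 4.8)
The plan is to argue exactly as in the proof of Theorem~\ref{th:deg_3}, separating an \emph{existence} part, in which every arrow of Figure~\ref{fig:deg_4} is shown to be a genuine degeneration, from a \emph{completeness} part, in which, for the subfamily $\{\mu_{4,i}\}_{i=1}^{7}$, the transitive closure of the drawn arrows is shown to exhaust the relation $\rightsquigarrow$.

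For existence I would proceed as in Table~\ref{tab:cont_3}: for each arrow $\mu_{4,i}\rightarrow\mu_{4,j}$ I would exhibit a continuous family $g_t\in\operatorname{GL}(4,\mathbb{C})$ defining a natural basis change of $\mu_{4,i}$ and check that $\lim_{t\to0}g_t\cdot\mu_{4,i}=\mu_{4,j}$. As in dimension three, the maps can be taken of the form $\operatorname{diag}(t^{a_1},t^{a_2},t^{a_3},t^{a_4})$, with occasional off-diagonal corrections $E_{kl}$ required for the algebras whose structure matrices mix basis vectors (for instance $\mu_{4,8}$, $\mu_{4,10}$ and $\mu_{4,12}$). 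Each verification is a routine evaluation of $g_t\big(\mu_{4,i}(g_t^{-1}f_k,g_t^{-1}f_k)\big)$, so I would record only the resulting transformations in a table and omit the computations.

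For completeness over $\{\mu_{4,i}\}_{i=1}^{7}$, I would first read off from Table~\ref{table:clas_nilp} the coarse invariants of Proposition~\ref{prop:dim} — the type (in lexicographic order) and $\dim\mathcal{E}^2$ — and discard every ordered pair $(\mu_{4,i},\mu_{4,j})$ for which $\mu_{4,i}\rightarrow\mu_{4,j}$ is already forbidden, namely whenever the type of $\mu_{4,i}$ does not precede that of $\mu_{4,j}$ or $\dim\mu_{4,i}^2<\dim\mu_{4,j}^2$. Since these seven algebras are pairwise distinguished by the pair (type, $\dim\mathcal{E}^2$), this eliminates the overwhelming majority of candidates. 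The residual task is to decide the finitely many \emph{admissible but undrawn} pairs; here I would invoke the finer invariant of Proposition~\ref{prop:dim}(iv)--(v), computing $\dim\mathcal{B}^2$ (equivalently $\dim\mathcal{H}^2$) by the method of Section~\ref{sec:4}, which is tractable in the nilpotent setting, to rule out as many as possible. Any pair surviving even this test — the prototype being the two type-$[2,2]$ algebras $\mu_{4,3}$ and $\mu_{4,4}$, where the type gives no obstruction and only $\dim\mathcal{E}^2$ fixes the admissible direction $\mu_{4,4}\rightarrow\mu_{4,3}$ — would be settled by hand, analysing the admissible shapes of $g_t$ as in Example~\ref{ex:1} and verifying that no limit produces the target structure matrix.

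The hard part will be precisely these residual non-existence arguments. The numerical invariants of Proposition~\ref{prop:dim} are necessary but not sufficient, so whenever they fail to separate two algebras one is forced into a direct analysis of \emph{all} natural basis changes of the source and a proof that the limiting structure matrix can never coincide with the target. This is delicate because, as Remark~\ref{rem:with_example} shows, degenerations are not transitive, so the absence of a single step cannot be inferred by composing or decomposing other steps, and each covering relation must be verified independently. Confining the completeness claim to $\{\mu_{4,i}\}_{i=1}^{7}$ keeps this case analysis finite and manageable, which is why the stronger assertion is not made for the whole of $\mathcal{N}_4(\mathbb{C})$.
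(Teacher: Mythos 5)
Your proposal follows essentially the same route as the paper's proof: existence of each arrow is established by exhibiting explicit transformations $g_t$ (diagonal matrices with occasional $E_{kl}$ corrections, recorded in a table with the routine verifications omitted), and completeness over $\{\mu_{4,i}\}_{i=1}^{7}$ is deduced from the invariants of Proposition~\ref{prop:dim}. In fact, the type and $\dim\mathcal{E}^2$ alone already forbid every undrawn ordered pair among these seven algebras (the paper only needs to make explicit the four pairs involving $\mu_{4,4}$, $\mu_{4,5}$ and $\mu_{4,6}$; all the rest follow from lexicographic monotonicity of the type), so your fallback tools --- comparing $\dim\mathcal{B}^2$ and the hand analysis of all natural basis changes --- are never actually invoked.
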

	\begin{proof}
		Although we omit the detailed computations, all these degenerations were explicitly constructed using suitable transformations $g_t$ listed in Table~\ref{tab:cont}. In particular, the grey ellipse highlights all transitive degenerations among the algebras $\mu_{4,1}$ through $\mu_{4,7}$. Looking at the type and the dimension of the derived subalgebras of $\mu_{4,4}$, $\mu_{4,5}$ and $\mu_{4,6}$, and applying Proposition~\ref{prop:dim}, it is immediate that neither $\mu_{4,4}$ nor $\mu_{4,5}$ degenerate to $\mu_{4,6}$, and conversely, $\mu_{4,6}$ does not degenerate to $\mu_{4,4}$ or $\mu_{4,5}$. Therefore, the Hasse diagram inside the grey ellipse above captures the maximal number of transitive degeneration relations within this family of seven evolution algebras, which, as shown in Table~\ref{tab:cont}, are all realisable.
		\begin{table}[H]
			\begin{center}
				\setlength{\tabcolsep}{10pt}
				\renewcommand{\arraystretch}{1.3}
				\begin{tabular}{|| l | l  || }
					\hline
					\textbf{Degeneration} & $g_t$  \\\hline\hline
					$\mu_{4,12}\longrightarrow\mu_{4,11}$ & $\operatorname{diag}(t,t^2,t^4,t^8)$ \\ 
					$\mu_{4,12}\longrightarrow\mu_{4,10}$ & $\operatorname{diag}(t^{-1},t^{-1},t^{-2},t^{-4})-t^{-2}E_{42}+t^{-2}E_{43}$ \\
					$\mu_{4,12}\longrightarrow\mu_{4,8}$ & $\operatorname{diag}(\sqrt{i}t^{-1},it^{-2},-t^{-2},t^{-4})+t^{-4}E_{42}-t^{-4}E_{43}$ \\
					$\mu_{4,11}\longrightarrow\mu_{4,7}$ & $\operatorname{diag}(1,1,t,t^{2})+E_{43}$ \\
					$\mu_{4,10}\longrightarrow\mu_{4,9}$ & $\operatorname{diag}(t,t,t^2,t^4)$  \\
					$\mu_{4,9}\longrightarrow\mu_{4,6}$ & $\operatorname{diag}(1,1,t,t^2)+E_{43}$ \\
					$\mu_{4,8}\longrightarrow\mu_{4,6}$ & $\operatorname{diag}(t,t^3,t^3,t^6)-it^2E_{43}$ \\
					$\mu_{4,7}\longrightarrow\mu_{4,6}$ & $\operatorname{diag}(1,t,t,t^2)+E_{42}$ \\
					$\mu_{4,7}\longrightarrow\mu_{4,5}$ & $\operatorname{diag}(t,t^2,1,t^4)$ \\
					$\mu_{4,6}\longrightarrow\mu_{4,3}$ & $\operatorname{diag}(t^2,t^2,1,t^4)$ \\
					$\mu_{4,5}\longrightarrow\mu_{4,4}$ & $\operatorname{diag}(1,t,1,t^2)+E_{32}$ \\
					$\mu_{4,4}\longrightarrow\mu_{4,3}$ & $\operatorname{diag}(t^{-3},t^{-1},t^{-4},t^{-2})+t^{-6}E_{43}$\\
					$\mu_{4,3}\longrightarrow\mu_{4,2}$ & $\operatorname{diag}(1,t^{-1},1,1)$\\
					\hline
				\end{tabular}
			\end{center}
			\caption{All degenerations in $\mathcal{N}_4(\mathbb{C})$ and the corresponding transformations..}
			\label{tab:cont}
		\end{table}
	\end{proof}
	\begin{remark}
		Although many of the transformations in Table~\ref{tab:cont} (and even earlier ones, such as those in Theorem~\ref{th:deg_3}) may appear difficult to obtain, Proposition~\ref{prop:deg_imp_def} often provides a powerful and practical tool. We illustrate its utility through the case $\mu_{4,12}\rightarrow\mu_{4,10}$. If $\mu_{4,10}$ is a degeneration of $\mu_{4,12}$, then, by Proposition~\ref{prop:deg_imp_def}, $\mu_{4,12}$ is a deformation of $\mu_{4,10}$. In particular, the structure matrix
		\[\begin{pmatrix}
			0 & 0 & 1 & 0 \\ 0 & 0 & 1 & 1 \\ 0 & 0 & 0 & 1 \\ 0 & 0 & 0 & 0
		\end{pmatrix}+t\begin{pmatrix}
			0 & 1 & 0 & 0 \\ 0 & 0 & 0 & 0 \\ 0 & 0 & 0 & 0 \\ 0 & 0 & 0 & 0
		\end{pmatrix}=\begin{pmatrix}
			0 & t & 1 & 0 \\ 0 & 0 & 1 & 1 \\ 0 & 0 & 0 & 1 \\ 0 & 0 & 0 & 0
		\end{pmatrix},\]
		defines a nilpotent first-order deformation of $\mu_{4,10}$, which is in fact isomorphic to $\mu_{4,12}$ for all $t\in\mathbb{C}^*$. A straightforward computation shows that the product with respect to the natural basis $B'=\{f_1=t^{-1}e_1,f_2=t^{-1}e_2-t^{-2}e_4,f_3=t^{-2}e_3+t^{-2}e_4,f_4=t^{-4}e_4\}$ coincides with the product of $\mu_{4,12}$. Indeed,
		\begin{align*}
			f_1^2&=t^{-2}e_1^2=t^{-2}(te_2+e_3)=t^{-1}e_2+t^{-2}e_3=f_2+f_3,\\
			f_2^2&=t^{-2}e_2^2+t^{-4}e_4^2=t^{-2}(e_2+e_3)=f_3,\\
			f_3^2&=t^{-4}e_3^2+t^{-4}e_4^2=t^{-4}e_4=f_4,\\
			f_4^2&=t^{-8}e_4^2=0.
		\end{align*}
		Therefore, a possible transformation $g_t$ is obtained as the change of basis matrix from $B'$ to the original basis $B$:
		\[g_t=\operatorname{diag}(t^{-1},t^{-1},t^{-2},t^{-4})-t^{-2}E_{42}+t^{-2}E_{43}.\]
	\end{remark}
	
	Although, as noted previously, the Hasse diagram of Proposition \ref{prop:deg_4} may not capture all possible transitive degenerations, it nevertheless allows us to draw the following conclusion.
	
	\begin{corollary}
		The evolution algebra $\mu_{4,12}$ is the unique rigid evolution algebra in $\mathcal{N}_4(\mathbb{C})$.
	\end{corollary}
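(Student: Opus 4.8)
The plan is to read ``rigid'' in the degeneration sense, consistently with the analogous statement for $\mathcal{N}_3(\mathbb{C})$: an algebra $\mu\in\mathcal{N}_4(\mathbb{C})$ is rigid if it is not a proper degeneration of any non-isomorphic algebra of the family, i.e.\ there is no $\nu\in\mathcal{N}_4(\mathbb{C})$ with $\nu\not\cong\mu$ and $\nu\rightsquigarrow\mu$. (This cannot mean deformation-rigidity, since Theorem~\ref{th:nonrigid} shows no evolution algebra is rigid in that sense.) Accordingly I would split the proof into two parts: first, that each $\mu_{4,i}$ with $i\leq 11$ fails to be rigid; second, that $\mu_{4,12}$ is rigid and hence the unique such algebra.

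For the first part I would simply exploit Figure~\ref{fig:deg_4}. Tracing the arrows of Proposition~\ref{prop:deg_4} one checks that $\mu_{4,12}\rightsquigarrow\mu_{4,i}$ for every $i\in\{1,\dots,11\}$: the chains $\mu_{4,12}\to\mu_{4,11}\to\mu_{4,7}\to\mu_{4,5}\to\mu_{4,4}\to\mu_{4,3}\to\mu_{4,2}\to\mu_{4,1}$, together with $\mu_{4,12}\to\mu_{4,10}\to\mu_{4,9}\to\mu_{4,6}$ and $\mu_{4,12}\to\mu_{4,8}$, already exhaust the list. In particular each $\mu_{4,i}$ with $i\leq 11$ appears as the target of a single degeneration step issued from a non-isomorphic algebra of the classification, hence is not rigid.

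For the second part I would use Proposition~\ref{prop:dim} to narrow the possible predecessors of $\mu_{4,12}$. Since type is monotone (non-decreasing in the lexicographic order) along $\rightarrow$ by part~(ii), it is also monotone along $\rightsquigarrow$, so it suffices to control single steps. Now $\mu_{4,12}$ has type $[1,1,1,1]$, which is the lexicographically smallest type occurring in $\mathcal{N}_4(\mathbb{C})$: every other type in Table~\ref{table:clas_nilp} either begins with an entry $>1$ or first disagrees with $[1,1,1,1]$ at a position where it has a larger entry. Hence any $\nu$ with $\nu\to\mu_{4,12}$ must have type exactly $[1,1,1,1]$, and the only algebras of that type are $\mu_{4,11}$ and $\mu_{4,12}$; so the sole candidate for a proper predecessor is $\mu_{4,11}$.

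It remains to rule out $\mu_{4,11}\to\mu_{4,12}$, which is the crux of the argument, since parts~(i)--(iii) of Proposition~\ref{prop:dim} cannot separate the two algebras (they share type $[1,1,1,1]$ and $\dim\mu_{4,11}^2=\dim\mu_{4,12}^2=3$). Here I would invoke parts~(iv)--(v) and compute the coboundary spaces exactly as in the proof of Theorem~\ref{th:cohom_2}. Solving the off-diagonal system \eqref{cond_1_der} shows that for \emph{both} algebras the space of admissible $\varphi$ (those with $\delta_\mu\varphi\in\mathcal{Z}^2(V)$) is $7$-dimensional, whence $\dim\mathcal{B}^2=7-\dim\der$. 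Computing the derivation algebras via \eqref{cond_1_der}--\eqref{cond_2_der} then gives $\dim\der(\mu_{4,11})=2$ but $\dim\der(\mu_{4,12})=1$: the extra structure constant $\omega_{13}=1$ in $\mu_{4,12}$ couples the equations for indices $i=1$ and $i=2$, forcing $\xi_{11}=0$ and collapsing the diagonal freedom. Consequently $\dim\mathcal{B}^2(\mu_{4,11})=5<6=\dim\mathcal{B}^2(\mu_{4,12})$, equivalently $\dim\mathcal{H}^2(\mu_{4,11})=11>10=\dim\mathcal{H}^2(\mu_{4,12})$, which contradicts the monotonicity of Proposition~\ref{prop:dim}(iv) (respectively~(v)). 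Therefore $\mu_{4,11}\not\to\mu_{4,12}$, so $\mu_{4,12}$ has no proper predecessor and is rigid, and by the first part it is the only rigid algebra in $\mathcal{N}_4(\mathbb{C})$. The main obstacle is precisely this last computation: the coarse numerical invariants of Proposition~\ref{prop:dim}(i)--(iii) are blind to the difference between $\mu_{4,11}$ and $\mu_{4,12}$, and one genuinely needs the finer cohomological invariant to break the tie.
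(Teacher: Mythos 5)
Your proposal is correct, and your reading of ``rigid'' (no non-isomorphic algebra of $\mathcal{N}_4(\mathbb{C})$ degenerates to it) is the right one. The paper itself gives essentially no proof: the corollary is stated as an immediate consequence of Proposition~\ref{prop:deg_4}, whose diagram shows, exactly as you trace, that each $\mu_{4,i}$ with $i\leq 11$ is the target of some arrow and hence is not rigid. What the paper leaves entirely implicit is why nothing can degenerate to $\mu_{4,12}$, and this is where your proposal adds genuine content. Your two-step argument --- first using Proposition~\ref{prop:dim}(i)--(ii) to force any predecessor of $\mu_{4,12}$ to have type $[1,1,1,1]$, hence to be $\mu_{4,11}$; then ruling out $\mu_{4,11}\rightarrow\mu_{4,12}$ via Proposition~\ref{prop:dim}(iv)--(v) --- is precisely the missing argument, and your numbers check out: for both algebras the space of $\varphi\in\End_{\mathbb{K}}(V)$ with $\delta_\mu\varphi\in\mathcal{Z}^2(V)$ is $7$-dimensional (diagonal entries plus $\xi_{41},\xi_{42},\xi_{43}$), while $\dim\der(\mu_{4,11})=2$ (parameters $\xi_{11},\xi_{41}$, with $\xi_{22}=2\xi_{11}$, $\xi_{33}=4\xi_{11}$, $\xi_{44}=8\xi_{11}$) and $\dim\der(\mu_{4,12})=1$ (the relation $e_1^2=e_2+e_3$ forces $\xi_{33}=2\xi_{11}$ and $\xi_{33}=4\xi_{11}$ simultaneously, so $\xi_{11}=0$), giving $\dim\mathcal{B}^2(\mu_{4,11})=5<6=\dim\mathcal{B}^2(\mu_{4,12})$, incompatible with $\mu_{4,11}\rightarrow\mu_{4,12}$ by part (iv). You are also right that the tie cannot be broken by the coarser invariants (i)--(iii), since the two algebras share type $[1,1,1,1]$ and three-dimensional square; your computation is, in effect, the only place in the whole paper where parts (iv)--(v) of Proposition~\ref{prop:dim} are genuinely needed, so your write-up supplies a rigorous justification that the paper merely gestures at.
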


	\section*{Acknowledgements}
	The second author was partially supported by the Agencia Estatal de Investigación (Spain), grant PID2020-115155GB-I00 (including European FEDER funding), by the Xunta de Galicia through the Competitive Reference Groups (GRC), grant ED431C 2023/31, and by the predoctoral contract FPU21/05685 and the research stay grant EST25/00293, both from the Ministerio de Ciencia, Innovación y Universidades (Spain).
	
	This paper was mostly written during a stay of Andrés Pérez-Rodríguez in Mulhouse (France). Andrés Pérez-Rodriguez is very grateful to the Département de Mathématiques, IRIMAS, Université de Haute-Alsace for their kind and hospitality.
	
	\section*{Declarations}
	
	\subsection*{Ethical Approval:}
	
	This declaration is not applicable.
	
	\subsection*{Conflicts of interest/Competing interests:} We have no conflicts of interests/competing interests to disclose.
	
	\subsection*{Authors' contributions:}
	
	All authors contributed equally to this work. 
	
	\subsection*{Data Availability Statement:} The authors confirm that the data supporting the findings of this study are available within the article.

	\newpage
	\appendix
	\section{Calculations for Theorem~\ref{th:cohom_2}}\label{appen:1}
	This first appendix is devoted to the explicit computations supporting Theorem~\ref{th:cohom_2}. These yield Table~\ref{table:inf_def}, which fully characterizes the infinitesimal deformations of all isomorphism classes of two-dimensional evolution algebras over $\mathbb{C}$.
	\subsection*{Infinitesimal deformations of \texorpdfstring{$\mathcal{E}_2$}{E2}}
	Let $\E_2$ be the evolution algebra over $\mathbb{C}$ or $\mathbb{R}$ with natural basis $B=\{e_1,e_2\}$ and product given by $e_1^2=e_2^2=e_1$. If $\theta\in\mathcal{B}^2(\E_2)$, then there must exist a linear morphism $(\varphi)_{BB}=(\xi_{ij})$ such that the following conditions hold:
	\begin{align*}
		\theta(e_1,e_2)&=\varphi(e_1e_2)-e_1\varphi(e_2)-\varphi(e_1)e_2=-(\xi_{12}+\xi_{21})e_1=0\implies \xi_{12}=-\xi_{21};\\
		\theta(e_1,e_1)&=\varphi(e_1^2)-2e_1\varphi(e_1)=-\xi_{11}e_1+\xi_{21}e_2;\\
		\theta(e_2,e_2)&=\varphi(e_2^2)-2e_2\varphi(e_2)=(\xi_{11}-2\xi_{22})e_1+\xi_{21}e_2.
	\end{align*} 
	Hence, we have that 
	\begin{align*}
		\mathcal{B}^2(\E_2)&=\left\{\big(\begin{smallmatrix}
			-\xi_{11} & \xi_{21} \\ \xi_{11}-2\xi_{22} & \xi_{21}
		\end{smallmatrix}\big)\mid \xi_{11},\xi_{21},\xi_{22}\in\mathbb{\mathbb{K}}\right\}=\spa\left\{\big(\begin{smallmatrix}
			-1 & 0 \\ 1 & 0
		\end{smallmatrix}\big),\big(\begin{smallmatrix}
			0 & 1 \\ 0 & 1
		\end{smallmatrix}\big),\big(\begin{smallmatrix}
			0 & 0 \\ -2 & 0
		\end{smallmatrix}\big)\right\},
	\end{align*}
	and
	\[\mathcal{H}^2(\E_2)=\spa\left\{\big(\begin{smallmatrix}
		0 & 0 \\ 0 & 1
	\end{smallmatrix}\big)+\mathcal{B}^2(\E_2)\right\}.\]
	Consequently, the set of all infinitesimal formal evolution deformations of $\E_2$ (up to equivalence) is given by 
	\begin{align*}
		\operatorname{InfDef}(\E_2)
		=\left\{\big(\begin{smallmatrix}
			1 & 0 \\ 1 & 0
		\end{smallmatrix}\big)+\big(\begin{smallmatrix}
			0 & 0 \\ 0 & \alpha
		\end{smallmatrix}\big)t\mid \alpha\in\mathbb{C}\right\}.
	\end{align*}
	\subsection*{Infinitesimal deformations of \texorpdfstring{$\mathcal{E}_3$}{E3}}
	Let $\E_3$ be the evolution algebra over $\mathbb{C}$ or $\mathbb{R}$ with natural basis $B=\{e_1,e_2\}$ and product given by $e_1^2=-e_2^2=e_1+e_2$. If $\theta\in\mathcal{B}^2(\E_3)$, then there must exist a linear morphism $(\varphi)_{BB}=(\xi_{ij})$ such that the following conditions hold:
	\begin{align*}
		\theta(e_1,e_2)&=\varphi(e_1e_2)-e_1\varphi(e_2)-\varphi(e_1)e_2=-(\xi_{12}-\xi_{21})(e_1+e_2)=0\implies \xi_{12}=\xi_{21};\\
		\theta(e_1,e_1)&=\varphi(e_1^2)-2e_1\varphi(e_1)=(\xi_{12}-\xi_{11})e_1+(\xi_{12}+\xi_{22}-2\xi_{11})e_2;\\
		\theta(e_2,e_2)&=\varphi(e_2^2)-2e_2\varphi(e_2)=(2\xi_{22}-\xi_{11}-\xi_{12})e_1+(\xi_{22}-\xi_{12})e_2.
	\end{align*} 
	Hence, we have that 
	\begin{align*}
		\mathcal{B}^2(\E_3)&=\left\{\big(\begin{smallmatrix}
			\xi_{12}-\xi_{11} & \xi_{12}+\xi_{22}-2\xi_{11} \\ 2\xi_{22}-\xi_{11}-\xi_{12} & \xi_{22}-\xi_{12}
		\end{smallmatrix}\big)\mid \xi_{11},\xi_{21},\xi_{22}\in\mathbb{\mathbb{K}}\right\}\\&=\spa\left\{\big(\begin{smallmatrix}
			1 & 1 \\ -1 & -1
		\end{smallmatrix}\big),\big(\begin{smallmatrix}
			-1 & -2 \\ -1 & 0
		\end{smallmatrix}\big),\big(\begin{smallmatrix}
			0 & 1 \\ 2 & 1
		\end{smallmatrix}\big)\right\}=\spa\left\{\big(\begin{smallmatrix}
			1 & 1 \\ -1 & -1
		\end{smallmatrix}\big),\big(\begin{smallmatrix}
			0 & 1 \\ 2 & 1
		\end{smallmatrix}\big)\right\},
	\end{align*}
	and
	\[\mathcal{H}^2(\E_3)=\spa\left\{\big(\begin{smallmatrix}
		0 & 0 \\ 1 & 0
	\end{smallmatrix}\big)+\mathcal{B}^2(\E_3),\big(\begin{smallmatrix}
		0 & 0 \\ 0 & 1
	\end{smallmatrix}\big)+\mathcal{B}^2(\E_3)\right\}.\]
	Consequently, the set of all infinitesimal formal evolution deformations of $\E_3$ (up to equivalence) is given by 
	\begin{align*}
		\operatorname{InfDef}(\E_3)
		=\left\{\big(\begin{smallmatrix}
			1 & 1 \\ -1 & -1
		\end{smallmatrix}\big)+\big(\begin{smallmatrix}
			0 & 0 \\ \alpha & \beta
		\end{smallmatrix}\big)t\mid \alpha\in\mathbb{K}\right\}.
	\end{align*}
	
	\subsection*{Infinitesimal deformations of \texorpdfstring{$\mathcal{E}_4$}{E4}}
	
	Let $\E_4$ be the evolution algebra over $\mathbb{C}$ or $\mathbb{R}$ with natural basis $B=\{e_1,e_2\}$ and product given by $e_1^2=e_2$ and $e_2^2=0$. If $\theta\in\mathcal{B}^2(\E_4)$, then there must exist a linear morphism $(\varphi)_{BB}=(\xi_{ij})$ such that the following conditions hold:
	\begin{align*}
		\theta(e_1,e_2)&=\varphi(e_1e_2)-e_1\varphi(e_2)-\varphi(e_1)e_2=-\xi_{12}e_2=0\implies \xi_{12}=0;\\
		\theta(e_1,e_1)&=\varphi(e_1^2)-2e_1\varphi(e_1)=(\xi_{22}-2\xi_{11})e_2;\\
		\theta(e_2,e_2)&=\varphi(e_2^2)-2e_2\varphi(e_2)=0.
	\end{align*} 
	Hence, we have that 
	\begin{align*}
		\mathcal{B}^2(\E_4)&=\left\{\big(\begin{smallmatrix}
			0 & \xi_{22}-2\xi_{11} \\ 0 & 0
		\end{smallmatrix}\big)\mid \xi_{11},\xi_{22}\in\mathbb{\mathbb{K}}\right\}=\spa\left\{\big(\begin{smallmatrix}
			0 & 1 \\ 0 & 0
		\end{smallmatrix}\big)\right\},
	\end{align*}
	and
	\[\mathcal{H}^2(\E_4)=\spa\left\{\big(\begin{smallmatrix}
		1 & 0 \\ 0 & 0
	\end{smallmatrix}\big)+\mathcal{B}^2(\E_4),\big(\begin{smallmatrix}
		0 & 0 \\ 1 & 0
	\end{smallmatrix}\big)+\mathcal{B}^2(\E_4),\big(\begin{smallmatrix}
		0 & 0 \\ 0 & 1
	\end{smallmatrix}\big)+\mathcal{B}^2(\E_4)\right\}.\]
	Consequently, the set of all infinitesimal formal evolution deformations of $\E_4$ (up to equivalence) is given by 
	\begin{align*}
		\operatorname{InfDef}(\E_4)
		=\left\{\big(\begin{smallmatrix}
			1 & 0 \\ 1 & 0
		\end{smallmatrix}\big)+\big(\begin{smallmatrix}
			\alpha & 0 \\ \beta & \gamma
		\end{smallmatrix}\big)t\mid \alpha,\beta,\gamma\in\mathbb{K}\right\}.
	\end{align*}
	
	\subsection*{Infinitesimal deformations of \texorpdfstring{$\mathcal{E}_5(a_2,a_3)$}{E5(a2,a3)}}
	
	Let $\E_5(a_2,a_3)$ be the evolution algebra over $\mathbb{C}$ or $\mathbb{R}$ with natural basis $B=\{e_1,e_2\}$ and product given by $e_1^2=e_1+a_2e_2$ and $e_2^2=a_3e_1+e_2$ with $1-a_2e_3\neq0$. If $\theta\in\mathcal{B}^2(\E_5(a_2,a_3))$, then there must exist a linear morphism $(\varphi)_{BB}=(\xi_{ij})$ such that the following conditions hold:
	\begin{align*}
		\theta(e_1,e_2)&=\varphi(e_1e_2)-e_1\varphi(e_2)-\varphi(e_1)e_2\\&=-\xi_{12}(e_1+a_2e_2)-\xi_{21}(a_3e_1+e_2)=0\implies \xi_{12}=\xi_{21}=0;\\
		\theta(e_1,e_1)&=\varphi(e_1^2)-2e_1\varphi(e_1)=-\xi_{11}e_1+a_2(\xi_{22}-2\xi_{11})e_2;\\
		\theta(e_2,e_2)&=\varphi(e_2^2)-2e_2\varphi(e_2)=a_3(\xi_{11}-2\xi_{22})e_1-\xi_{22}e_2.
	\end{align*} 
	Hence, we have that
	\begin{align*}
		\mathcal{B}^2(\E_5)&=\left\{\big(\begin{smallmatrix}
			-\xi_{11} & a_2(\xi_{22}-2\xi_{11}) \\ a_3(\xi_{11}-2\xi_{22}) & -\xi_{22}
		\end{smallmatrix}\big)\mid \xi_{11},\xi_{22}\in\mathbb{\mathbb{K}}\right\}=\spa\left\{\big(\begin{smallmatrix}
			-1 & -2a_2 \\ a_3 & 0
		\end{smallmatrix}\big),\big(\begin{smallmatrix}
			0 & a_2 \\ -2a_3 & -1
		\end{smallmatrix}\big)\right\}.
	\end{align*}
	Consequently, we now distinguish the set of infinitesimal deformations depending on the values of $a_2$ and $a_3$:
	\begin{enumerate}
		\item If $a_2\neq0$, then $\mathcal{H}^2(\E_5)=\spa\left\{\big(\begin{smallmatrix}
			0 & 0 \\ 1 & 0
		\end{smallmatrix}\big)+\mathcal{B}^2(\E_5),\big(\begin{smallmatrix}
			0 & 0 \\ 0 & 1
		\end{smallmatrix}\big)+\mathcal{B}^2(\E_5)\right\}$
		and
		\begin{align*}
			\operatorname{InfDef}(\E_5)
			=\left\{\big(\begin{smallmatrix}
				1 & a_2 \\ a_3 & 1
			\end{smallmatrix}\big)+\big(\begin{smallmatrix}
				0 & 0 \\ \alpha & \beta
			\end{smallmatrix}\big)t\mid \alpha,\beta\in\mathbb{K}\right\}.
		\end{align*}
		\item If $a_2=a_3=0$, then $\mathcal{H}^2(\E_5)=\spa\left\{\big(\begin{smallmatrix}
			0 & 1 \\ 0 & 0
		\end{smallmatrix}\big)+\mathcal{B}^2(\E_5),\big(\begin{smallmatrix}
			0 & 0 \\ 1 & 0
		\end{smallmatrix}\big)+\mathcal{B}^2(\E_5)\right\}$ and 
		\begin{align*}
			\operatorname{InfDef}(\E_5)
			=\left\{\big(\begin{smallmatrix}
				1 & 0 \\ 0 & 1
			\end{smallmatrix}\big)+\big(\begin{smallmatrix}
				0 & \alpha \\ \beta & 0
			\end{smallmatrix}\big)t\mid \alpha,\beta\in\mathbb{K}\right\}.
		\end{align*}
		\item If $a_2=0\neq a_3$, then $\mathcal{H}^2(\E_5)=\spa\left\{\big(\begin{smallmatrix}
			0 & 1 \\ 0 & 0
		\end{smallmatrix}\big)+\mathcal{B}^2(\E_5),\big(\begin{smallmatrix}
			0 & 0 \\ 0 & 1
		\end{smallmatrix}\big)+\mathcal{B}^2(\E_5)\right\}$ and 
		\begin{align*}
			\operatorname{InfDef}(\E_5)
			=\left\{\big(\begin{smallmatrix}
				1 & 0 \\ a_3 & 1
			\end{smallmatrix}\big)+\big(\begin{smallmatrix}
				0 & \alpha \\ 0 & \beta
			\end{smallmatrix}\big)t\mid \alpha,\beta\in\mathbb{K}\right\}.
		\end{align*}
	\end{enumerate}
	
	\subsection*{Infinitesimal deformations of \texorpdfstring{$\mathcal{E}_(a_4)$}{E6(a4)}}
	
	Let $\E_6(a_4)$ be the evolution algebra over $\mathbb{C}$ or $\mathbb{R}$ with natural basis $B=\{e_1,e_2\}$ and product given by $e_1^2=e_2$ and $e_2^2=e_1+a_4e_2$. If $\theta\in\mathcal{B}^2(\E_6)$, then there must exist a linear morphism $(\varphi)_{BB}=(\xi_{ij})$ such that the following conditions hold:
	\begin{align*}
		\theta(e_1,e_2)&=\varphi(e_1e_2)-e_1\varphi(e_2)-\varphi(e_1)e_2=-\xi_{21}e_1-(\xi_{12}+a_4\xi_{21})e_2=0\implies \xi_{12}=\xi_{21}=0;\\
		\theta(e_1,e_1)&=\varphi(e_1^2)-2e_1\varphi(e_1)=(\xi_{22}-2\xi_{11})e_2;\\
		\theta(e_2,e_2)&=\varphi(e_2^2)-2e_2\varphi(e_2)=(\xi_{11}-2\xi_{22})e_1-a_4\xi_{22}e_2.
	\end{align*} 
	Hence, we have that
	\begin{align*}
		\mathcal{B}^2(\E_6)&=\left\{\big(\begin{smallmatrix}
			0 & \xi_{22}-2\xi_{11} \\ \xi_{11}-2\xi_{22} & -a_4\xi_{22}
		\end{smallmatrix}\big)\mid \xi_{11},\xi_{22}\in\mathbb{\mathbb{K}}\right\}=\spa\left\{\big(\begin{smallmatrix}
			0 & 1 \\ -2 & -a_4
		\end{smallmatrix}\big),\big(\begin{smallmatrix}
			0 & -2 \\ 1 & 0
		\end{smallmatrix}\big)\right\}.
	\end{align*}
	and
	\[\mathcal{H}^2(\E_6)=\spa\left\{\big(\begin{smallmatrix}
		1 & 0 \\ 0 & 0
	\end{smallmatrix}\big)+\mathcal{B}^2(\E_6),\big(\begin{smallmatrix}
		0 & 0 \\ 0 & 1
	\end{smallmatrix}\big)+\mathcal{B}^2(\E_6)\right\}.\]
	Consequently, the set of all infinitesimal formal evolution deformations of $\E_4$ (up to equivalence) is given by 
	\begin{align*}
		\operatorname{InfDef}(\E_6)
		=\left\{\big(\begin{smallmatrix}
			0 & 1 \\ 1 & a_4
		\end{smallmatrix}\big)+\big(\begin{smallmatrix}
			\alpha & 0 \\ 0 & \beta
		\end{smallmatrix}\big)t\mid \alpha,\beta\in\mathbb{K}\right\}.
	\end{align*}
	\section{Calculations for Theorem \ref{th:deg_3}}\label{appen:2}
	
	This appendix is devoted to the explicit computations supporting Theorem~\ref{th:deg_3} and, particularly, Table~\ref{tab:cont_3}, which  fully characterises the transitive degeneration relations within $\mathcal{N}_3(\mathbb{C})$.
	
	\begin{table}[H]
		\renewcommand{\arraystretch}{1.5}
			\begin{tabular}{ ||p{3cm}|p{9.9cm}||  }
				\hline
				$\mu_{3,4}\longrightarrow\mu_{3,3}$ & $g_t=\operatorname{diag}(1,t,t^2)+E_{32}$\\
				\hline\hline
				\multicolumn{2}{||l||}{
					$g_t\cdot\mu_{3,4}(f_1,f_1)=g_t\big(\mu_{3,4}(e_1,e_1)\big)=g_t(e_2)=tf_2+f_3\xrightarrow{\text{$t\to0$}}f_3,$
				}\\
				\multicolumn{2}{||l||}{
					$g_t\cdot\mu_{3,4}(f_2,f_2)=t^4g_t\big(\mu_{3,4}(e_2,e_2)\big)=t^{-2}g_t(e_3)=f_3\xrightarrow{\text{$t\to0$}}f_3,$
				}\\
				\multicolumn{2}{||l||}{
					$g_t\cdot\mu_{3,4}(f_3,f_3)=t^{-4}g_t\big(\mu_{3,4}(e_3,e_3)\big)=0\xrightarrow{\text{$t\to0$}}0$
				}\\
				\hline
			\end{tabular}
	\end{table}
	\begin{table}[H]

		\renewcommand{\arraystretch}{1.5}
			\begin{tabular}{ ||p{3cm}|p{9.9cm}||  }
				\hline
				$\mu_{3,3}\longrightarrow\mu_{3,2}$ & $g_t=\operatorname{diag}(t,1,t^2)$\\
				\hline\hline
				\multicolumn{2}{||l||}{
					$g_t\cdot\mu_{3,3}(f_1,f_1)=t^{-2}g_t\big(\mu_{3,3}(e_1,e_1)\big)=t^{-2}g_t(e_3)=f_3\xrightarrow{\text{$t\to0$}}f_3;$
				}\\
				\multicolumn{2}{||l||}{
					$g_t\cdot\mu_{3,3}(f_2,f_2)=g_t\big(\mu_{3,3}(e_2,e_2)\big)=g_t(e_3)=t^2f_3\xrightarrow{\text{$t\to0$}}0;$
				}\\
				\multicolumn{2}{||l||}{
					$	g_t\cdot\mu_{3,3}(f_3,f_3)=t^{-4}g_t\big(\mu_{3,3}(e_3,e_3)\big)=0\xrightarrow{\text{$t\to0$}}0;$
				}\\
				\hline
			\end{tabular}
	\end{table}
	
	
	\begin{table}[H]
		\renewcommand{\arraystretch}{1.5}
			\begin{tabular}{ ||p{3cm}|p{9.9cm}||  }
				\hline
				$\mu_{3,4}\longrightarrow\mu_{3,2}$ & $g_t=\operatorname{diag}(1,t,t^2)$\\
				\hline\hline
				\multicolumn{2}{||l||}{
					$g_t\cdot\mu_{3,4}(f_1,f_1)=g_t\big(\mu_{3,4}(e_1,e_1)\big)=g_t(e_2)=tf_2\xrightarrow{\text{$t\to0$}}0;$
				}\\
				\multicolumn{2}{||l||}{
					$g_t\cdot\mu_{3,4}(f_2,f_2)=t^{-2}g_t\big(\mu_{3,4}(e_2,e_2)\big)=t^{-2}g_t(e_3)=f_3\xrightarrow{\text{$t\to0$}}f_3;$
				}\\
				\multicolumn{2}{||l||}{
					$	g_t\cdot\mu_{3,3}(f_3,f_3)=0\xrightarrow{\text{$t\to0$}}0;$
				}\\
				\hline
			\end{tabular}
	\end{table}

	\section{Calculations for Proposition \ref{prop:deg_4}}\label{appen:3}
	This appendix is devoted to the explicit computations supporting Proposition~\ref{prop:deg_4} and, particularly, Table~\ref{tab:cont}, which  yields several transitive degeneration relations within $\mathcal{N}_3(\mathbb{C})$.
	\begin{table}[H]
		\renewcommand{\arraystretch}{1.5}
			\begin{tabular}{ ||p{3cm}|p{11cm}||  }
				\hline
				$\mu_{4,12}\longrightarrow\mu_{4,11}$ & $g_t=\operatorname{diag}(t,t^2,t^4,t^8)$\\
				\hline\hline
				\multicolumn{2}{||l||}{
					$g_t\cdot\mu_{4,12}(f_1,f_1)=t^{-2}g_t\big(\mu_{4,12}(e_1,e_1)\big)=t^{-2}g_t(e_2+e_3)=f_2+t^2f_3\xrightarrow{\text{$t\to0$}}f_2;$
				}\\
				\multicolumn{2}{||l||}{
					$g_t\cdot\mu_{4,12}(f_2,f_2)=t^{-4}g_t\big(\mu_{4,12}(e_2,e_2)\big)=t^{-4}g_t(e_3)=f_3\xrightarrow{\text{$t\to0$}}f_3;$
				}\\
				\multicolumn{2}{||l||}{
					$g_t\cdot\mu_{4,12}(f_3,f_3)=t^{-8}g_t\big(\mu_{4,12}(e_3,e_3)\big)=t^{-8}g_t(e_4)=f_4\xrightarrow{\text{$t\to0$}}f_4;$
				}\\
				\multicolumn{2}{||l||}{
					$g_t\cdot\mu_{4,12}(f_4,f_4)=0\xrightarrow{\text{$t\to0$}}0.$
				}\\
				\hline
			\end{tabular}
	\end{table}
	
	\begin{table}[H]
		\renewcommand{\arraystretch}{1.4}
			\begin{tabular}{ ||p{3cm}|p{11cm}||  }
				\hline
				$\mu_{4,12}\longrightarrow\mu_{4,10}$ & $g_t = \operatorname{diag}(t^{-1},t^{-1},t^{-2},t^{-4})-t^{-2}E_{42}+t^{-2}E_{43}$\\
				\hline\hline
				\multicolumn{2}{||l||}{
					$g_t\cdot\mu_{4,12}(f_1,f_1)=t^{2}g_t\big(\mu_{4,12}(e_1,e_1)\big)=t^{2}g_t(e_2+e_3)=tf_2+f_3\xrightarrow{\text{$t\to0$}}f_3;$
				}\\
				\multicolumn{2}{||l||}{
					$g_t\cdot\mu_{4,12}(f_2,f_2)=t^{2}g_t\big(\mu_{4,12}(e_2,e_2)\big)=t^{2}g_t(e_3)=f_3+f_4\xrightarrow{\text{$t\to0$}}f_3+f_4;$
				}\\
				\multicolumn{2}{||l||}{
					$g_t\cdot\mu_{4,12}(f_3,f_3)=t^{4}g_t\big(\mu_{4,12}(e_3,e_3)\big)=t^{4}g_t(e_4)=f_4\xrightarrow{\text{$t\to0$}}f_4;$
				}\\
				\multicolumn{2}{||l||}{
					$g_t\cdot\mu_{4,12}(f_4,f_4)=0\xrightarrow{\text{$t\to0$}}0.$
				}\\
				\hline
			\end{tabular}
	\end{table}

	
	\begin{table}[H]
		\renewcommand{\arraystretch}{1.4}
			\begin{tabular}{ ||p{3cm}|p{11cm}||  }
				\hline
				$\mu_{4,12}\longrightarrow\mu_{4,8}$ & $g_t = \operatorname{diag}(\sqrt{i}t^{-1},it^{-2},-t^{-2},t^{-4})+t^{-4}E_{42}-t^{-4}E_{43}$\\
				\hline\hline
				\multicolumn{2}{||l||}{
					$g_t\cdot\mu_{4,12}(f_1,f_1)=\frac{t^2}{i}g_t\big(\mu_{4,12}(e_1,e_1)\big)=\frac{t^2}{i}g_t(e_2+e_3)=f_2+if_3\xrightarrow{\text{$t\to0$}}f_2+if_3;$
				}\\
				\multicolumn{2}{||l||}{
					$g_t\cdot\mu_{4,12}(f_2,f_2)=-t^{4}g_t\big(\mu_{4,12}(e_2,e_2)\big)=-t^{4}g_t(e_3)=t^2f_3+f_4\xrightarrow{\text{$t\to0$}}f_4;$
				}\\
				\multicolumn{2}{||l||}{
					$g_t\cdot\mu_{4,12}(f_3,f_3)=t^{4}g_t\big(\mu_{4,12}(e_3,e_3)\big)=t^{4}g_t(e_4)=f_4\xrightarrow{\text{$t\to0$}}f_4;$
				}\\
				\multicolumn{2}{||l||}{
					$g_t\cdot\mu_{4,12}(f_4,f_4)=0\xrightarrow{\text{$t\to0$}}0.$
				}\\
				\hline
			\end{tabular}
	\end{table}
	
	
	\begin{table}[H]
		\renewcommand{\arraystretch}{1.4}
			\begin{tabular}{ ||p{3cm}|p{11cm}||  }
				\hline
				$\mu_{4,11}\longrightarrow\mu_{4,7}$ & $g_t = \operatorname{diag}(1,1,t,t^{2})+E_{43}$\\
				\hline\hline
				\multicolumn{2}{||l||}{
					$g_t\cdot\mu_{4,11}(f_1,f_1)=g_t\big(\mu_{4,11}(e_1,e_1)\big)=g_t(e_2)=f_2\xrightarrow{\text{$t\to0$}}f_2;$
				}\\
				\multicolumn{2}{||l||}{
					$g_t\cdot\mu_{4,11}(f_2,f_2)=g_t\big(\mu_{4,11}(e_2,e_2)\big)=g_t(e_3)=tf_3+f_4\xrightarrow{\text{$t\to0$}}f_4;$
				}\\
				\multicolumn{2}{||l||}{
					$g_t\cdot\mu_{4,11}(f_3,f_3)=t^{-2}g_t\big(\mu_{4,11}(e_3,e_3)\big)=t^{4}g_t(e_4)=f_4\xrightarrow{\text{$t\to0$}}f_4;$
				}\\
				\multicolumn{2}{||l||}{
					$g_t\cdot\mu_{4,11}(f_4,f_4)=0\xrightarrow{\text{$t\to0$}}0.$
				}\\
				\hline
			\end{tabular}
	\end{table}
	
	
	\begin{table}[H]
		\renewcommand{\arraystretch}{1.4}
			\begin{tabular}{||p{3cm}|p{11cm}||}
				\hline
				$\mu_{4,10}\longrightarrow\mu_{4,9}$ & $g_t = \operatorname{diag}(t,t,t^2,t^4)$\\
				\hline\hline
				\multicolumn{2}{||l||}{
					$g_t\cdot\mu_{4,10}(f_1,f_1)=t^{-2}g_t\big(\mu_{4,10}(e_1,e_1)\big)=t^{-2}g_t(e_3)=f_3\xrightarrow{\text{$t\to0$}}f_3;$
				}\\
				\multicolumn{2}{||l||}{
					$g_t\cdot\mu_{4,10}(f_2,f_2)=t^{-2}g_t\big(\mu_{4,10}(e_2,e_2)\big)=t^{-2}g_t(e_3+e_4)=f_3+t^2f_4\xrightarrow{\text{$t\to0$}}f_3;$
				}\\
				\multicolumn{2}{||l||}{
					$g_t\cdot\mu_{4,10}(f_3,f_3)=t^{-4}g_t\big(\mu_{4,10}(e_3,e_3)\big)=t^{-4}g_t(e_4)=f_4\xrightarrow{\text{$t\to0$}}f_4;$
				}\\
				\multicolumn{2}{||l||}{
					$g_t\cdot\mu_{4,10}(f_4,f_4)=0\xrightarrow{\text{$t\to0$}}0.$
				}\\
				\hline
			\end{tabular}
	\end{table}
	
	
	\begin{table}[H]
		\renewcommand{\arraystretch}{1.4}
			\begin{tabular}{||p{3cm}|p{11cm}||}
				\hline
				$\mu_{4,9}\longrightarrow\mu_{4,6}$ & $g_t =\operatorname{diag}(1,1,t,t^2)+E_{43}$\\
				\hline\hline
				\multicolumn{2}{||l||}{
					$g_t\cdot\mu_{4,9}(f_1,f_1)=g_t\big(\mu_{4,9}(e_1,e_1)\big)=g_t(e_3)=tf_3+f_4\xrightarrow{\text{$t\to0$}}f_4;$
				}\\
				\multicolumn{2}{||l||}{
					$g_t\cdot\mu_{4,9}(f_2,f_2)=g_t\big(\mu_{4,9}(e_2,e_2)\big)=g_t(e_3)=tf_3+f_4\xrightarrow{\text{$t\to0$}}f_4;$
				}\\
				\multicolumn{2}{||l||}{
					$g_t\cdot\mu_{4,9}(f_3,f_3)=t^{-2}g_t\big(\mu_{4,9}(e_3,e_3)\big)=t^{-2}g_t(e_4)=f_4\xrightarrow{\text{$t\to0$}}f_4;$
				}\\
				\multicolumn{2}{||l||}{
					$g_t\cdot\mu_{4,9}(f_4,f_4)=0\xrightarrow{\text{$t\to0$}}0.$
				}\\
				\hline
			\end{tabular}
	\end{table}
	
	
	\begin{table}[H]
		\renewcommand{\arraystretch}{1.4}
			\begin{tabular}{ ||p{3cm}|p{11cm}||  }
				\hline
				$\mu_{4,8}\longrightarrow\mu_{4,6}$ & $g_t =\operatorname{diag}(t,t^3,t^3,t^6)-it^2E_{43}$\\
				\hline\hline
				\multicolumn{2}{||l||}{
					$g_t\cdot\mu_{4,8}(f_1,f_1)=t^{-2}g_t\big(\mu_{4,8}(e_1,e_1)\big)=t^{-2}g_t(e_2+ie_3)=tf_2+itf_3+f_4\xrightarrow{\text{$t\to0$}}f_4;$
				}\\
				\multicolumn{2}{||l||}{
					$g_t\cdot\mu_{4,8}(f_2,f_2)=t^{-6}g_t\big(\mu_{4,8}(e_2,e_2)\big)=t^{-6}g_t(e_4)=f_4\xrightarrow{\text{$t\to0$}}f_4;$
				}\\
				\multicolumn{2}{||l||}{
					$g_t\cdot\mu_{4,8}(f_3,f_3)=t^{-6}g_t\big(\mu_{4,8}(e_3,e_3)\big)=t^{-6}g_t(e_4)=f_4\xrightarrow{\text{$t\to0$}}f_4;$
				}\\
				\multicolumn{2}{||l||}{
					$g_t\cdot\mu_{4,8}(f_4,f_4)=0\xrightarrow{\text{$t\to0$}}0.$
				}\\
				\hline
			\end{tabular}
	\end{table}
	
	
	\begin{table}[H]
		\renewcommand{\arraystretch}{1.4}
			\begin{tabular}{ ||p{3cm}|p{11cm}||  }
				\hline
				$\mu_{4,7}\longrightarrow\mu_{4,6}$ & $ g_t =\operatorname{diag}(1,t,t,t^2)+E_{42}$\\
				\hline\hline
				\multicolumn{2}{||l||}{
					$g_t\cdot\mu_{4,7}(f_1,f_1)=g_t\big(\mu_{4,7}(e_1,e_1)\big)=g_t(e_2)=tf_2+f_4\xrightarrow{\text{$t\to0$}}f_4;$
				}\\
				\multicolumn{2}{||l||}{
					$g_t\cdot\mu_{4,7}(f_2,f_2)=t^{-4}g_t\big(\mu_{4,7}(e_2,e_2)\big)=t^{-4}g_t(e_4)=f_4\xrightarrow{\text{$t\to0$}}f_4;$
				}\\
				\multicolumn{2}{||l||}{
					$g_t\cdot\mu_{4,7}(f_3,f_3)=t^{-2}g_t\big(\mu_{4,7}(e_3,e_3)\big)=t^{-2}g_t(e_4)=f_4\xrightarrow{\text{$t\to0$}}f_4;$
				}\\
				\multicolumn{2}{||l||}{
					$g_t\cdot\mu_{4,7}(f_4,f_4)=g_t\big(\mu_{4,7}(e_4,e_4)\big)=0\xrightarrow{\text{$t\to0$}}0.$
				}\\
				\hline
			\end{tabular}
	\end{table}
	
	
	\begin{table}[H]
		\renewcommand{\arraystretch}{1.4}
			\begin{tabular}{ ||p{3cm}|p{11cm}||  }
				\hline
				$\mu_{4,7}\longrightarrow\mu_{4,5}$ & $ g_t = \operatorname{diag}(t,t^2,1,t^4)$\\
				\hline\hline
				\multicolumn{2}{||l||}{
					$g_t\cdot\mu_{4,7}(f_1,f_1)=t^{-2}g_t\big(\mu_{4,7}(e_1,e_1)\big)=t^{-2}g_t(e_2)=f_2\xrightarrow{\text{$t\to0$}}f_2;$
				}\\
				\multicolumn{2}{||l||}{
					$g_t\cdot\mu_{4,7}(f_2,f_2)=t^{-4}g_t\big(\mu_{4,7}(e_2,e_2)\big)=t^{-4}g_t(e_4)=f_4\xrightarrow{\text{$t\to0$}}f_4;$
				}\\
				\multicolumn{2}{||l||}{
					$g_t\cdot\mu_{4,7}(f_3,f_3)=g_t\big(\mu_{4,7}(e_3,e_3)\big)=g_t(e_4)=t^4f_4\xrightarrow{\text{$t\to0$}}0;$
				}\\
				\multicolumn{2}{||l||}{
					$g_t\cdot\mu_{4,7}(f_4,f_4)=g_t\big(\mu_{4,7}(e_4,e_4)\big)=0\xrightarrow{\text{$t\to0$}}0.$
				}\\
				\hline
			\end{tabular}
	\end{table}
	
	
	\begin{table}[H]
		\renewcommand{\arraystretch}{1.4}
			\begin{tabular}{ ||p{3cm}|p{11cm}||  }
				\hline
				$\mu_{4,6}\longrightarrow\mu_{4,3}$ & $ g_t = \operatorname{diag}(t^2,t^2,1,t^4)$\\
				\hline\hline
				\multicolumn{2}{||l||}{
					$g_t\cdot\mu_{4,6}(f_1,f_1)=t^{-4}g_t\big(\mu_{4,6}(e_1,e_1)\big)=t^{-4}g_t(e_4)=f_4\xrightarrow{\text{$t\to0$}}f_4$
				}\\
				\multicolumn{2}{||l||}{
					$g_t\cdot\mu_{4,6}(f_2,f_2)=t^{-4}g_t\big(\mu_{4,6}(e_2,e_2)\big)=t^{-4}g_t(e_4)=f_4\xrightarrow{\text{$t\to0$}}f_4;$
				}\\
				\multicolumn{2}{||l||}{
					$g_t\cdot\mu_{4,6}(f_3,f_3)=g_t\big(\mu_{4,6}(e_3,e_3)\big)=g_t(e_4)=t^4f_4\xrightarrow{\text{$t\to0$}}0;$
				}\\
				\multicolumn{2}{||l||}{
					$g_t\cdot\mu_{4,6}(f_4,f_4)=g_t\big(\mu_{4,6}(e_4,e_4)\big)=0\xrightarrow{\text{$t\to0$}}0.$
				}\\
				\hline
			\end{tabular}
	\end{table}
	
	
	\begin{table}[H]
		\renewcommand{\arraystretch}{1.4}
			\begin{tabular}{ ||p{3cm}|p{11cm}||  }
				\hline
				$\mu_{4,5}\longrightarrow\mu_{4,4}$ & $  g_t = \operatorname{diag}(1,t,1,t^2)+E_{32}$\\
				\hline\hline
				\multicolumn{2}{||l||}{
					$g_t\cdot\mu_{4,5}(f_1,f_1)=g_t\big(\mu_{4,5}(e_1,e_1)\big)=g_t(e_2)=tf_2+f_3\xrightarrow{\text{$t\to0$}}f_3;$
				}\\
				\multicolumn{2}{||l||}{
					$g_t\cdot\mu_{4,5}(f_2,f_2)=t^{-2}g_t\big(\mu_{4,5}(e_2,e_2)\big)=t^{-2}g_t(e_4)=f_4\xrightarrow{\text{$t\to0$}}f_4;$
				}\\
				\multicolumn{2}{||l||}{
					$g_t\cdot\mu_{4,5}(f_3,f_3)=g_t\cdot\mu_{4,5}(f_4,f_4)=0.$
				}\\
				\hline
			\end{tabular}
	\end{table}
	
	
	\begin{table}[H]
		\renewcommand{\arraystretch}{1.4}
			\begin{tabular}{ ||p{3cm}|p{11cm}||  }
				\hline
				$\mu_{4,4}\longrightarrow\mu_{4,3}$ & $ g_t = \operatorname{diag}(t^{-3},t^{-1},t^{-4},t^{-2})+t^{-6}E_{43}$\\
				\hline\hline
				\multicolumn{2}{||l||}{
					$g_t\cdot\mu_{4,4}(f_1,f_1)=t^{6}g_t\big(\mu_{4,4}(e_1,e_1)\big)=t^{6}g_t(e_3)=t^2f_3+f_4\xrightarrow{\text{$t\to0$}}f_4;$
				}\\
				\multicolumn{2}{||l||}{
					$g_t\cdot\mu_{4,4}(f_2,f_2)=t^{2}g_t\big(\mu_{4,4}(e_2,e_2)\big)=t^{2}g_t(e_4)=f_4\xrightarrow{\text{$t\to0$}}f_4;$
				}\\
				\multicolumn{2}{||l||}{
					$g_t\cdot\mu_{4,4}(f_3,f_3)=g_t\cdot\mu_{4,4}(f_4,f_4)=0.$
				}\\
				\hline
			\end{tabular}
	\end{table}
	
	
	\begin{table}[H]
		\renewcommand{\arraystretch}{1.4}
			\begin{tabular}{ ||p{3cm}|p{11cm}||  }
				\hline
				$\mu_{4,3}\longrightarrow\mu_{4,2}$ & $g_t = \operatorname{diag}(1,t^{-1},1,1)$\\
				\hline\hline
				\multicolumn{2}{||l||}{
					$g_t\cdot\mu_{4,3}(f_1,f_1)=g_t\big(\mu_{4,3}(e_1,e_1)\big)=g_t(e_3)=f_3\xrightarrow{\text{$t\to0$}}f_3;$
				}\\
				\multicolumn{2}{||l||}{
					$g_t\cdot\mu_{4,3}(f_2,f_2)=t^{2}g_t\big(\mu_{4,3}(e_2,e_2)\big)=t^{2}g_t(e_3)=t^2f_3\xrightarrow{\text{$t\to0$}}0;$
				}\\
				\multicolumn{2}{||l||}{
					$g_t\cdot\mu_{4,3}(f_3,f_3)=g_t\cdot\mu_{4,3}(f_4,f_4)=0.$
				}\\
				\hline
			\end{tabular}
	\end{table}


\begin{thebibliography}{10}
	
	\bibitem{B_05_7dim_nilp}
	{\sc D.~Burde}, {\em Degenerations of 7-dimensional nilpotent {L}ie algebras},
	  Comm. Algebra, 33 (2005), pp.~1259--1277.
	
	\bibitem{B_07_cont}
	{\sc D.~Burde}, {\em Contractions of
	  {L}ie algebras and algebraic groups}, Arch. Math. (Brno), 43 (2007),
	  pp.~321--332.
	
	\bibitem{BS_99_deg_Lie}
	{\sc D.~Burde and C.~Steinhoff}, {\em Classification of orbit closures of
	  4-dimensional complex {L}ie algebras}, J. Algebra, 214 (1999), pp.~729--739.
	
	\bibitem{thesis_yolanda}
	{\sc Y.~Cabrera~Casado}, {\em Evolution algebras}, PhD thesis, Universidad de
	  M\'alaga, 2016.
	
	\bibitem{CMM_23}
	{\sc Y.~Cabrera~Casado, D.~Mart\'in~Barquero, and C.~Mart\'in~Gonz\'alez}, {\em
	  Two-dimensional perfect evolution algebras over domains}, J. Algebraic
	  Combin., 58 (2023), pp.~569--587.
	
	\bibitem{CFK_24}
	{\sc A.~J. Calder\'on~Mart\'in, A.~Fern\'andez~Ouaridi, and I.~Kaygorodov},
	  {\em Non-degenerate evolution algebras}, J. Pure Appl. Algebra, 228 (2024),
	  107594, 26 pp.
	
	\bibitem{Carl_78}
	{\sc R.~Carles}, {\em Rigidit\'e{} dans les vari\'et\'es d'alg\`ebres de {L}ie
	  et composantes irr\'eductibles}, C. R. Acad. Sci. Paris S\'er. A-B, 286
	  (1978), pp.~A1223--A1226.
	
	\bibitem{CLOR_14}
	{\sc J.~M. Casas, M.~Ladra, B.~A. Omirov, and U.~A. Rozikov}, {\em On evolution
	  algebras}, Algebra Colloq., 21 (2014), pp.~331--342.
	
	\bibitem{CLR_11_chain}
	{\sc J.~M. Casas, M.~Ladra, and U.~A. Rozikov}, {\em A chain of evolution
	  algebras}, Linear Algebra Appl., 435 (2011), pp.~852--870.
	
	\bibitem{EL_15}
	{\sc A.~Elduque and A.~Labra}, {\em Evolution algebras and graphs}, J. Algebra
	  Appl., 14 (2015), 1550103, 10 pp.
	
	\bibitem{EL_16}
	{\sc A.~Elduque and A.~Labra}, {\em On nilpotent
	  evolution algebras}, Linear Algebra Appl., 505 (2016), pp.~11--31.
	
	\bibitem{FM_06_cont}
	{\sc A.~Fialowski and M.~de~Montigny}, {\em On deformations and contractions of
	  {L}ie algebras}, SIGMA Symmetry Integrability Geom. Methods Appl., 2 (2006),
	  048.
	
	\bibitem{Gab_74}
	{\sc P.~Gabriel}, {\em Finite representation type is open}, in Proceedings of
	  the {I}nternational {C}onference on {R}epresentations of {A}lgebras
	  Carleton Mathematical Lecture Notes, 9 (1974), 10, 23 pp.
	
	\bibitem{G_64}
	{\sc M.~Gerstenhaber}, {\em On the deformation of rings and algebras}, Ann. of
	  Math. (2), 79 (1964), pp.~59--103.
	
	\bibitem{GO_88_deg_nilp_Lie}
	{\sc F.~Grunewald and J.~O'Halloran}, {\em Varieties of nilpotent {L}ie
	  algebras of dimension less than six}, J. Algebra, 112 (1988), pp.~315--325.
	
	\bibitem{HA_15}
	{\sc A.~S. Hegazi and H.~Abdelwahab}, {\em Nilpotent evolution algebras over
	  arbitrary fields}, Linear Algebra Appl., 486 (2015), pp.~345--360.
	
	\bibitem{IP_24_deg_assoc}
	{\sc N.~M. Ivanova and C.~A. Pallikaros}, {\em Degenerations of complex
	  associative algebras of dimension three via {L}ie and {J}ordan algebras},
	  Adv. Group Theory Appl., 18 (2024), pp.~41--79.
	
	\bibitem{KN_87}
	{\sc A.~Kirillov and Y.~Neretin}, {\em The variety an of n-dimensional lie
	  algebra structures}, Am. Math. Soc. Transl., 137 (1987).
	
	\bibitem{K_38}
	{\sc A.~N. Kolmogorov}, {\em On the analytic methods of probability theory},
	  Uspekhi matematicheskikh nauk,  (1938), pp.~5--41.
	
	\bibitem{M_97}
	{\sc A.~Makhlouf}, {\em Alg\`ebres associatives et calcul formel}, Theoret.
	  Comput. Sci., 187 (1997), pp.~123--145.
	\newblock Computer algebra (Saint-Louis, 1996).
	
	\bibitem{M_07}
	{\sc A.~Makhlouf}, {\em A comparison of
	  deformations and geometric study of varieties of associative algebras}, Int.
	  J. Math. Math. Sci.,  (2007), Art. ID 18915, 24 pp.
	
	\bibitem{Maz_79}
	{\sc G.~Mazzola}, {\em The algebraic and geometric classification of
	  associative algebras of dimension five}, Manuscripta Math., 27 (1979),
	  pp.~81--101.
	
	\bibitem{NR_66}
	{\sc A.~Nijenhuis and R.~W. Richardson, Jr.}, {\em Cohomology and deformations
	  in graded {L}ie algebras}, Bull. Amer. Math. Soc., 72 (1966), pp.~1--29.
	
	\bibitem{NR_67}
	{\sc A.~Nijenhuis and R.~W. Richardson, Jr.}, {\em Deformations of
	  {L}ie algebra structures}, J. Math. Mech., 17 (1967), pp.~89--105.
	
	\bibitem{S_90_deg_nilp_Lie}
	{\sc C.~Seeley}, {\em Degenerations of {$6$}-dimensional nilpotent {L}ie
	  algebras over {${\bf C}$}}, Comm. Algebra, 18 (1990), pp.~3493--3505.
	
	\bibitem{Tian_08}
	{\sc J.~P. Tian}, {\em Evolution algebras and their applications}, vol.~1921 of
	  Lecture Notes in Mathematics, Springer, Berlin, 2008.
	
	\bibitem{TV_06}
	{\sc J.~P. Tian and P.~Vojt{\v{e}}chovsk{\'y}}, {\em Mathematical concepts of
	  evolution algebras in non-{M}endelian genetics}, Quasigroups Related Systems,
	  14 (2006), pp.~111--122.
	
	\end{thebibliography}
\end{document}